\documentclass[10pt]{amsart}

\usepackage{amsmath,amsthm,amssymb,amsaddr}
\usepackage{enumerate,tikz,hyperref,graphics,bussproofs,todonotes,comment}

\newcommand{\set}[2]{\{ #1 \mid #2 \}}

\newcommand{\card}[1]{| #1 |}

\newcommand{\boldtau}{\boldsymbol{\tau}}

\newcommand{\assign}{\mathrel{:=}}

\newcommand{\seq}{\vartriangleright}
\newcommand{\dmneg}{\neg}

\newcommand{\wit}{\mathord{\raisebox{\depth}{\rotatebox[origin=c]{180}{$\mathrm{w}$}}}}

\newtheorem*{theorem*}{Theorem}
\newtheorem{theorem}{Theorem}
\newtheorem{lemma}[theorem]{Lemma}

\newtheorem{definition}[theorem]{Definition}

\DeclareMathOperator{\Th}{Th}
\DeclareMathOperator{\Hen}{Hen}

\newcommand{\STH}{\mathcal{ST}^{H}}

  \newcommand{\MQST}{\mathcal{MQST}}

\begin{document}
\title{Sequent calculi for first-order $\mathrm{ST}$}

\author{Francesco Paoli}
\address{Dipartimento di Pedagogia, Psicologia, Filosofia, Universit\`a di Cagliari}

\author{Adam P\v{r}enosil}
\address{Departament de Filosofia, Universitat de Barcelona}

\maketitle
\begin{abstract}
Strict-Tolerant Logic ($\mathrm{ST}$) underpins na\"{i}ve theories of truth and vagueness (respectively including a fully disquotational truth predicate and an unrestricted tolerance principle) without jettisoning any classically valid laws. The classical sequent calculus without Cut is sometimes advocated as an appropriate proof-theoretic presentation of $\mathrm{ST}$. Unfortunately, there is only a partial correspondence between its derivability relation and the relation of local metainferential $\mathrm{ST}$-validity -- these relations coincide only upon the addition of elimination rules and only within the propositional fragment of the calculus, due to the non-invertibility of the quantifier rules. In this paper, we present two calculi for first-order $\mathrm{ST}$ with an eye to recapturing this correspondence in full. The first calculus is close in spirit to the Epsilon calculus. The other calculus includes rules for the discharge of sequent-assumptions; moreover, it is normalisable and admits interpolation. 
\end{abstract}
\section{Introduction}

Strict-Tolerant Logic ($\mathrm{ST}$) \cite{Cob1, Cob2, CEPV, Rip1, Ripley} has been at the centre of thoroughgoing debates in philosophical logic over the last decade or so. In the intentions of its propounders, a recourse to $\mathrm{ST}$ as a logical basis allows truth theorists to retain a fully disquotational truth predicate without having to forswear classical logic ($\mathrm{CL}$), and without incurring the penalty of paradoxes. In a similar way, it is possible to build on top of $\mathrm{ST}$ a theory of vagueness that includes an unrestricted tolerance principle -- again, with no obligation to give up any single classical tautology or classically valid argument schema. 

Cobreros, Egr\'e, Ripley, and van Rooij maintain that true sentences can be either \emph{strictly} or \emph{tolerantly} true. Likewise, false sentences can be either strictly or tolerantly false. An $\mathrm{ST}$-valid argument is an argument that never leads from strictly true premises to a strictly false conclusion -- it may lead, though, from strictly true premises to a tolerantly false conclusion. These informal remarks can be recast into a rigorous semantic framework by recourse to first-order $3$-valued strong Kleene models (see below), where a strictly (tolerantly) true sentence is assigned a value that is equal to $1$ (greater than $0$), and a strictly (tolerantly) false sentence is assigned a value that is equal to $0$ (smaller than $1$). Importantly, $\mathrm{ST}$-valid sequents (viewed as argument forms) are no more and no less than the classically valid sequents -- hence the contention that $\mathrm{CL}$ is not being maimed. Despite this, the transitivity of consequence is not valid without restriction (see e.g. \cite{Cob2}): it may be the case that a strictly true premiss $\varphi$ entails a tolerantly true conclusion $\psi$, which in turn entails a strictly false sentence $\chi$, while $\varphi$ fails to entail $\chi$. Due to this failure of transitivity, the addition of principles for disquotational truth to first-order $\mathrm{ST}$ does not lead to triviality. For instance, although the Liar sentence is a theorem of the resulting theory, and although any formula follows from the Liar sentence in $\mathrm{ST}$,  one cannot concatenate these derivations. Similar considerations hold for the possibility of a classically-based theory of vagueness containing a principle of tolerance for vague predicates. 

In a nutshell, Cobreros and colleagues hold that $\mathrm{ST}$ can solve the paradoxes without having, like many of its rivals, to pay the price of mutilating $\mathrm{CL}$ in the process. Whether they are actually in a position to keep these generous promises is a matter of some controversy (see e.g.\ \cite{Barrio, Barpai, DP, Pailos, Priest} for some critical views), into which we will not enter here, although some brief remarks on the issue will be reserved for the next section.

Depending on the occasion, $\mathrm{ST}$ is presented via models or via proof systems. Ripley, in particular, against the backdrop of his bilateralist (hence inferentialist) views, tends to favour a formulation of first-order $\mathrm{ST}$ in terms of a sequent calculus -- indeed, the classical sequent calculus minus the rule of Cut \cite{Ripley}. Due to Gentzen's Cut Elimination theorem, the provable sequents of this calculus are precisely the classically valid sequents, which, by the above, are none other than the $\mathrm{ST}$-valid ones. Due to the absence of Cut, moreover, one can safely supplement it with rules for, say, disquotational truth while eschewing the paradoxical derivations, because transitivity is blocked at the appropriate places.

We will see in the next section that the adequacy of this Gentzen-style rendition of $\mathrm{ST}$ is problematic. In point of fact, a version of the classical \emph{propositional} sequent calculus can yield a system that is strongly complete with respect to the above semantics, in a substantive sense that we will render precise in due course. Crucially, to attain completeness, one must augment the standard classical calculus, whose operational part consists in introduction rules only, with \emph{elimination} rules that invert the introductions. This is a feasible goal, because all the sentential connectives can be given invertible rules. The first-order calculus, though, is quite another matter. The left rule for the universal quantifier and the right rule for the existential quantifier are anything but invertible. The above strategy cannot be straightforwardly carried over to the first-order level.

In this paper, we present some suggestions to overcome this problem. After providing some basic information on the proof theory and semantics of $\mathrm{ST}$ (§ \ref{robertafadda}), we introduce two sequent calculi that are sound and complete for first-order $\mathrm{ST}$, based on different ideas. The first calculus, $\mathcal{ST^H}$, requires an expansion of the chosen first-order signature by denumerably many individual constants, the \emph{Henkin constants}, whose role is to act as \textquotedblleft witnesses\textquotedblright ~for the different existential and universal formulas expressible in the language, as in the construction of the canonical model of the Henkin-style completeness theorem for first-order logic (§ \ref{ignazioputzu}). Using this device, it is possible to formulate introduction rules for quantifiers that are both invertible and free from any eigenvariable restriction. This approach has obvious similarities with Hilbert's Epsilon calculus \cite{Zach}, which we will try to elucidate (along with the existing differences). As far as we could see, this calculus lacks interesting proof-theoretical properties. In particular, it is not clear what type of normal form would be appropriate for derivations in this calculus.

The other calculus, $\mathcal{MQST}$ (§§ \ref{marcopignotti} and \ref{antonellomura}), is metainferential in character. It is based on the idea that introduction and elimination rules can \emph{discharge} sequent-assumptions, exactly like formula-assumptions can be discharged in natural deduction calculi. This leads to a formulation of the problematic elimination rules (the left universal and the right existential ones) fashioned after Schroeder-Heister's generalised elimination rules in natural deduction \cite{SH}. In particular,  $\mathcal{MQST}$ behaves better than $\mathcal{ST^H}$: we provide a normal form theorem and derive an  interpolation theorem that, in so far as we are working in a system whose deductive strength is intermediate between the classical sequent calculus with Cut and its counterpart without Cut, can hopefully be viewed with interest also by classically-minded logicians.

\section{Strict-Tolerant Logic: Its Semantics and Proof Theory}\label{robertafadda}

As hinted above, the whole project of Strict-Tolerant Logic rests on the idea that there are two modes of truth and falsity: a \emph{strict} and a \emph{tolerant} mode. No strictly true sentence can be strictly false at the same time. However, there are sentences that are both tolerantly true and tolerantly false. Pertinent examples are paradoxical sentences like the Liar, or sentences that ascribe a vague predicate to one of its borderline cases of application.

To cash out this insight in formal terms, Cobreros and colleagues  \cite{Cob1, Cob2, CEPV, Rip1, Ripley} provide a $3$-valued semantics for the language of classical first-order logic, where the Boolean values $1$ and $0$ correspond, respectively, to strict truth and strict falsity, while tolerant truth and tolerant falsity find a home in the semantics thanks to the presence of the non-classical value $\frac{1}{2}$. Namely, a sentence is tolerantly true in a model if it is assigned therein either the value $1$ or the value $\frac{1}{2}$, and it is tolerantly false in a model if it is assigned therein either the value $\frac{1}{2}$ or the value $0$. The evaluation clauses for the connectives and the quantifiers are the familiar clauses of the \emph{strong Kleene} semantics for a first-order language, well-known in the literature on paradox for its use e.g. in Kripke's theory of truth \cite{Kripke}.

More precisely, let $\mathcal{L}$ be a signature for classical first-order logic, consisting of relation symbols and function symbols of finite (possibly zero) arity. $\mathcal{L}$-formulas are defined as usual -- quantifier-free formulas are referred to as $\mathcal{L}$-\emph{P-formulas}. $\varphi, \psi, ...$ are used as variables for $\mathcal{L}$-(P-)formulas, and $\Gamma, \Delta, ...$ as variables for sets of $\mathcal{L}$-(P-)formulas. An $\mathcal{L}$-\emph{(P)-sequent} is an ordered pair of finite sets of $\mathcal{L}$-(P)-formulas, noted $\Gamma \seq \Delta$. The set of all $\mathcal{L}$-sequents will be sometimes denoted by $Seq_\mathcal{L}$. 

An \emph{$\mathrm{ST}$-model} for $\mathcal{L}$ is a pair $\mathsf{M} = \langle D, I\rangle$ such that:
\begin{itemize}
    \item $I(P^n)\colon D^n \to \{ 0, \frac{1}{2}, 1 \}$, for any $n$-ary relation symbol $P^n$;
    \item $I(x) \in D$, for any variable $x$;
    \item $I(f^n)\colon D^n \to D$, for any $n$-ary function symbol $f^n$;
    \item for any atomic $\mathcal{L}$-formula $P^{n}(t_1,...,t_n)$,
    \[
    I(P^{n}(t_1,...,t_n)) = I(P^n)(I(t_1),...,I(t_n));
    \]
    \item for any $\mathcal{L}$-formulas $\varphi, \psi$, $I(\lnot \varphi) = 1 - I(\varphi)$, $I(\varphi \land \psi) = \min (I(\varphi), I(\psi))$ and $I(\varphi \lor \psi) = \max (I(\varphi), I(\psi))$;
    \item $I(\forall x\, \varphi(x)) = \min (I'(\varphi (x)))$, for all $x$-variants $I'$ of $I$;
    \item $I(\exists x\, \varphi(x)) = \max (I'(\varphi (x)))$, for all $x$-variants $I'$ of $I$.
\end{itemize}

So much for the semantics. The next task in the $\mathrm{ST}$-theorist's agenda is to define a concept of logical consequence. For reasons on which we will not dwell, Cobreros, Egr\'e, Ripley and van Rooj adopt a multiple-conclusion notion. The guiding intuition is clear -- a set of conclusions follows from a certain set of premisses if there is no way for the premisses to be all \emph{strictly true} while the conclusions are all \emph{strictly false}. In other words, this happens if every model where all the premisses are stricty true is a model where at least one conclusion is tolerantly true. 

Since we deal with sequent calculi in this paper, the formal definition of multiple-conclusion consequence in $\mathrm{ST}$ will be recast in terms of a definition of validity for $\mathcal{L}$-sequents. We say that an $\mathrm{ST}$-model (for $\mathcal{L}$) $\mathsf{M} = \langle D, I\rangle$ $\mathrm{ST}$-\emph{satisfies} an $\mathcal{L}$-sequent $\Gamma \seq \Delta$ (in symbols, $\mathsf{M} \models_{\mathrm{ST}} \Gamma \seq \Delta$) if either there is $\varphi \in \Gamma$ such that $I(\varphi) \in \{ 0, \frac{1}{2}\}$ or there is $\psi \in \Delta$ such that $I(\psi) \in \{ 1, \frac{1}{2}\}$. We also say that an $\mathcal{L}$-sequent $\Gamma \seq \Delta$ is $\mathrm{ST}$-\emph{valid} (in symbols, $ \models_{\mathrm{ST}} \Gamma \seq \Delta$) if for all $\mathrm{ST}$-models $\mathsf{M}$, $\mathsf{M} \models_{\mathrm{ST}} \Gamma \seq \Delta$. Denoting by $ \models_{\mathrm{CL}} \Gamma \seq \Delta$ the usual notion of validity for classical sequents, one can establish that:

\begin{lemma}[\cite{Cob1}; see also \cite{Girard}]\label{predisposto}
    $ \models_{\mathrm{ST}} \Gamma \seq \Delta$ iff $ \models_{\mathrm{CL}} \Gamma \seq \Delta$.
\end{lemma}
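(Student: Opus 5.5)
Both directions rest on comparing strong Kleene models with classical ones. I will identify a classical model with an $\mathrm{ST}$-model whose relation symbols take only the values $0$ and $1$. On such a model the strong Kleene clauses (with $1-x$, $\min$, $\max$ and the quantifier clauses) agree with the classical Tarskian clauses. In a two-valued model, ``tolerantly true'' (value in $\{1,\frac12\}$) coincides with ``true'', and ``not strictly true'' (value in $\{0,\frac12\}$) coincides with ``false''. Hence $\mathrm{ST}$-satisfaction of $\Gamma \seq \Delta$ in a two-valued model is exactly classical satisfaction.

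The left-to-right direction is then immediate. If $\models_{\mathrm{ST}} \Gamma \seq \Delta$, every classical model, being a two-valued $\mathrm{ST}$-model, $\mathrm{ST}$-satisfies the sequent. By the observation above it satisfies it classically, so $\models_{\mathrm{CL}} \Gamma \seq \Delta$.

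The right-to-left direction is the substantive one, and I would argue contrapositively via a monotonicity lemma. Order values by the information order $\frac12 \sqsubseteq 0$ and $\frac12 \sqsubseteq 1$. Given an $\mathrm{ST}$-model $\mathsf{M}=\langle D,I\rangle$ refuting $\Gamma\seq\Delta$, I would construct a classical model $\mathsf{M}'=\langle D,I'\rangle$ on the same domain, with the same interpretation of function symbols and variables. For each relation symbol, $I'(P)(\vec d)$ equals $I(P)(\vec d)$ whenever the latter is $0$ or $1$, and equals $1$ (an arbitrary choice) when it is $\frac12$. So $I(P)\sqsubseteq I'(P)$ pointwise.

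The key lemma is then: for every formula $\varphi$ and every assignment, $I(\varphi)\sqsubseteq I'(\varphi)$, i.e.\ if $I(\varphi)\in\{0,1\}$ then $I'(\varphi)=I(\varphi)$. I would prove it by induction on $\varphi$, quantifying over all assignments simultaneously so that the quantifier step works, since $x$-variants of $I$ and $I'$ correspond because the domain is shared. The atomic case holds by construction, because terms denote the same elements. Negation preserves $\sqsubseteq$ since $1-x$ swaps $0$ and $1$ and fixes $\frac12$. For $\land$ and $\forall$, the crucial check is that $\min$ is $\sqsubseteq$-monotone, including over infinite families. If $\min_i a_i=1$, then all $a_i=1$, so all $b_i=1$ and the minimum is $1$. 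If $\min_i a_i=0$, some $a_i=0$, so $b_i=0$ and the minimum is $0$. The cases for $\lor$ and $\exists$ are dual. This monotonicity of the Kleene operations is the heart of the argument. I expect it to be the only point needing care, specifically the infinitary min/max for the quantifiers; it is routine once the induction is stated over all assignments.

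Finally, in $\mathsf{M}$ every $\varphi\in\Gamma$ has value $1$ and every $\psi\in\Delta$ has value $0$. By the lemma these values are preserved in $\mathsf{M}'$, so $\mathsf{M}'$ is a classical counter-model, and $\not\models_{\mathrm{CL}}\Gamma\seq\Delta$. Combining this with the first direction yields Lemma~\ref{predisposto}.
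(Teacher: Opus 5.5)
Your proof is correct and complete. Two-valued $\mathrm{ST}$-models are exactly the classical models. Sharpening every atomic $\frac12$ (to $1$, or to anything) yields a classical model that preserves every value in $\{0,1\}$. This follows from the $\sqsubseteq$-monotonicity of $1-x$, $\min$ and $\max$; the quantifier case works because the value set is finite, so the extremum over the nonempty domain is attained, and because the induction is run over all assignments.

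The paper gives no proof of this lemma and defers to \cite{Cob1} and \cite{Girard}. Your sharpening argument is the standard direct semantic proof of the fact. The pointer to \cite{Girard} suggests a different, proof-theoretic route. One first shows that $\mathrm{ST}$-validity coincides with cut-free provability in $\mathrm{LK}$ (essentially $\mathcal{ST}^Q$), i.e.\ that three-valued models are sound and complete for the cut-free calculus. Gentzen's Hauptsatz, together with soundness and completeness of $\mathrm{LK}$, then identifies cut-free provability with classical validity.

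Your route is more elementary and self-contained: it needs neither cut elimination nor any completeness theorem. The proof-theoretic route gives more, namely that the $\mathrm{ST}$-valid sequents are exactly the cut-free derivable ones, which is the fact Section~\ref{robertafadda} leans on when motivating Ripley's proposal. Your sharpening lemma is also precisely the semantic ingredient that can stand in for the Hauptsatz there. Combined with cut-free completeness for $\mathrm{ST}$-validity, it shows that every classically valid sequent is cut-free provable, i.e.\ a semantic proof of cut elimination.
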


Thus, the $\mathrm{ST}$-valid sequents are precisely the classically valid ones. $\mathrm{ST}$ and $\mathrm{CL}$, though, do not validate the same sequent-to-sequent inferences, also called \emph{meta\-inferences}. \emph{Transitivity} is a case in point. In a theory of truth based on $\mathrm{ST}$, the Liar $\lambda$ is such that for all $\mathsf{M} = \langle D, I\rangle$ we have $I(\lambda) = \frac{1}{2}$. Then for atomic $\varphi, \psi$ we will have $\models_{\mathrm{ST}} \varphi \seq \lambda$ and $\models_{\mathrm{ST}} \lambda \seq \psi$, yet it won't be the case that $\models_{\mathrm{ST}} \varphi \seq \psi$. In $\mathrm{CL}$, on the other hand, transitivity holds without restrictions. Therefore, it has been claimed that the identification of $\mathrm{ST}$ and $\mathrm{CL}$ is questionable \cite{Barrio, Barpai, DP, Pailos, Priest}. Interestingly, there are logics that validate the same inferences \emph{and} metainferences as $\mathrm{CL}$, but differ from it at the level of inferences among metainferences. Examples like these pose many technical and philosophical questions, addressed in the flourishing literature on \emph{metainferential logics} (see \cite{Barrio,Barpai,Barriomore,Pailos}).

Capitalising on the co-extensivity of $\mathrm{ST}$-valid and classically valid sequents, Ripley \cite{Ripley} invites us to view the classical sequent calculus without Cut as a proof-theoretically adequate presentation of first-order $\mathrm{ST}$. Indeed, because of Gentzen's Cut Elimination result, the provable sequents of this calculus are precisely the classically valid sequents -- hence, by Lemma \ref{predisposto}, the $\mathrm{ST}$-valid ones. Due to the absence of Cut, moreover, one can safely supplement it with rules for, say, disquotational truth while still eschewing the paradoxical derivations.

A possible objection to the plausibility of this suggestion is that the \emph{derivability relation} of such a calculus, arguably its distinctive earmark (as opposed to the set of its theorems, which can be shared with other calculi, crucially including the fully transitive classical calculus), does not correspond to any significant relation among sequents definable in terms of the $\mathrm{ST}$ semantics. In particular, let us say that an $\mathcal{L}$-sequent $S$ $\mathrm{ST}$-\emph{follows} from a set $X$ of $\mathcal{L}$-sequents (in symbols, $X \models_{\mathrm{ST}} S$) in case for any $\mathrm{ST}$-model $\mathsf{M}$, if $\mathsf{M} \models_{\mathrm{ST}} S'$ for any $S' \in X$, then $\mathsf{M} \models_{\mathrm{ST}} S$. This relation, sometimes called \emph{local metainferential validity}, is considered by many practicioners of metainferential logics as the most appropriate notion of sequent-to-sequent consequence in the context of the above semantics (although the debate is lively: See e.g. \cite{CEPV, DST, DP, FR, French, Golan, Ripley, Teijeiro}). This relation is, however, strictly larger than the derivability relation of the classical sequent calculus without Cut.

This shortcoming is readily mended at the \emph{propositional} level, using a calculus introduced by Pynko~\cite{Pynko} which augments classical propositional sequent calculus without Cut with elimination rules for all logical connectives. Indeed, in \cite{DP} it is shown that the derivability relation in this calculus coincides with local meta\-inferential validity. This is a viable move because all the connectives can be given \emph{invertible} introduction rules, in such a way that the attendant eliminations are precisely the inverses of the respective introductions. This sort of \textquotedblleft harmony\textquotedblright~is crucial for the completeness proof: the canonical valuation would not be a valuation at all in absence of this property.

The propositional calculus $\mathcal{ST}^P$ with invertible rules is defined in Figure~\ref{fig: rules}, where $\varphi, \psi, ...$ denote $\mathcal{L}$-P-formulas and $\Gamma, \Delta, ...$ denote sets of $\mathcal{L}$-P-formulas.
 Note that all logical rules are \emph{bidirectional}, i.e., they can be applied top-down as well as bottom-up. Namely, $\mathcal{ST}^P$ contains the inverses of all the rules for the sentential connectives, where it is understood that a $2$-premise rule $\frac{S_1, S_2}{S}$ has two inverses, $\frac{S}{S_1}$ and $\frac{S}{S_2}$. By means of the bottom-up elimination rules one can restore in $\mathcal{ST}^P$ some of the classical derivations that are impeded by the absence of Cut. Hence, as anticipated:

\begin{figure}
\caption{Rules of $\mathcal{ST}^P$}
\label{fig: rules}

\medskip

\textbf{Propositional logical rules}

\begin{prooftree}
\def\fCenter{\seq}
\Axiom$\varphi, \psi, \Gamma \fCenter \Delta$\RightLabel{\scriptsize ($\land$L)}
\doubleLine
\UnaryInf$\varphi \wedge \psi, \Gamma \fCenter \Delta$

\def\fCenter{\seq}
\Axiom$\Gamma \fCenter \Delta, \varphi$
\Axiom$\Gamma \fCenter \Delta, \psi$\RightLabel{\scriptsize ($\land$R)}
\doubleLine
\BinaryInf$\Gamma \fCenter \Delta, \varphi \wedge \psi$

\noLine
\BinaryInfC{}
\end{prooftree}

\begin{prooftree}
\def\fCenter{\seq}
\Axiom$\varphi, \Gamma \fCenter \Delta$
\Axiom$\psi, \Gamma \fCenter \Delta$\RightLabel{\scriptsize ($\lor$L)}
\doubleLine
\BinaryInf$\varphi \vee \psi, \Gamma \fCenter \Delta$

\def\fCenter{\seq}
\Axiom$\Gamma \fCenter \Delta, \varphi, \psi$\RightLabel{\scriptsize ($\lor$R)}
\doubleLine
\UnaryInf$\Gamma \fCenter \Delta, \varphi \vee \psi$

\noLine
\BinaryInfC{}
\end{prooftree}

\begin{prooftree}
\def\fCenter{\seq}
\Axiom$\varphi, \Gamma \fCenter \Delta$\RightLabel{\scriptsize ($\lnot$R)}
\doubleLine
\UnaryInf$\Gamma \fCenter \Delta, \dmneg \varphi$

\def\fCenter{\seq}
\Axiom$\Gamma \fCenter \Delta, \varphi$\RightLabel{\scriptsize ($\lnot$L)}
\doubleLine
\UnaryInf$\dmneg \varphi, \Gamma \fCenter \Delta$

\noLine
\BinaryInfC{}
\end{prooftree}

\medskip

\textbf{Structural rules: Weakening}

\begin{prooftree}
\def\fCenter{\seq}
\Axiom$\Gamma \fCenter \Delta$\RightLabel{\scriptsize (WL)}
\UnaryInf$\varphi, \Gamma \fCenter \Delta$

\def\fCenter{\seq}
\Axiom$\Gamma \fCenter \Delta$\RightLabel{\scriptsize (WR)}
\UnaryInf$\Gamma \fCenter \Delta, \varphi$

\noLine
\BinaryInfC{}
\end{prooftree}

\medskip

\textbf{Structural rules: Identity}

\begin{prooftree}
\def\fCenter{}
\Axiom$\fCenter\vphantom{\Sigma}$\RightLabel{\scriptsize (ID)}
\def\fCenter{\seq}
\UnaryInf$\varphi \fCenter \varphi\vphantom{\Sigma}$

\end{prooftree}

\end{figure}

\begin{theorem}[\cite{DP}]
If $X \cup \{ S \}$ is any set of $\mathcal{L}$-P-sequents, then $X \vdash_{\mathcal{ST}^P} S$ iff $X \models_{\mathrm{ST}} S$.
\end{theorem}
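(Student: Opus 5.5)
Soundness is checked rule by rule, and completeness is proved by a canonical-valuation construction that takes the premiss set $X$ itself as the "theory".

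\textbf{Soundness.} I argue by induction on derivations from $X$: every rule of $\mathcal{ST}^P$ preserves $\mathrm{ST}$-satisfaction \emph{in a fixed model} $\mathsf{M}$. Hence any $\mathsf{M}$ satisfying all of $X$ satisfies every sequent derivable from $X$.
\begin{itemize}
\item (ID) is satisfied in every model: if $I(\varphi)\neq 1$ then $\varphi$ is not strictly true on the left, and otherwise $\varphi$ is tolerantly true on the right.
\item Weakening is trivially sound.
\item For the bidirectional logical rules I check that, in a fixed model, the conclusion is satisfied iff all premisses are. For example, $\varphi\wedge\psi$ on the left fails to be strictly true iff $\varphi$ or $\psi$ fails to be strictly true. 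This equivalence is exactly what the strong Kleene clauses give, using that the "designated" sets are $\{1\}$ on the left and $\{1,\frac12\}$ on the right, each closed under the obvious min/max behaviour.
\end{itemize}
Invertibility in this local sense covers both the introduction and the elimination directions at once.

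\textbf{Completeness.} Suppose $X\nvdash S$ with $S=\Gamma_0\seq\Delta_0$. Let $\mathrm{Th}(X)$ be the set of sequents derivable from $X$; it is closed under the rules but, crucially, not under Cut. I build a model $\mathsf{M}$ satisfying $\mathrm{Th}(X)$ but not $S$, as follows.
\begin{enumerate}
\item Extend $S$ to a maximal pair $(\Gamma^*,\Delta^*)$ of sets of P-formulas with $\Gamma_0\subseteq\Gamma^*$ and $\Delta_0\subseteq\Delta^*$, such that no finite subsequent $\Gamma\seq\Delta$ with $\Gamma\subseteq\Gamma^*$, $\Delta\subseteq\Delta^*$ is in $\mathrm{Th}(X)$. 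This is a Zorn/enumeration argument using finiteness of sequents and weakening.
\item Because Cut is absent, maximality does not force $\Gamma^*$ and $\Delta^*$ to be complementary. Instead, for each formula $\varphi$, at least one of $\varphi\in\Gamma^*$ or $\varphi\in\Delta^*$ holds, and possibly both or neither. I avoid needing a genuine disjunction property: adding $\varphi$ to the left alone would produce some $\varphi,\Gamma_1\seq\Delta_1\in\mathrm{Th}(X)$, and then maximality only gives exclusion, not membership. So I define the valuation by
\[
I(\varphi)=0 \text{ if } \varphi\notin\Gamma^*,\qquad I(\varphi)=1 \text{ if } \varphi\notin\Delta^*,\qquad I(\varphi)=\tfrac12 \text{ otherwise}.
\]
For this to be well defined I need the mixed case to be impossible: no $\varphi$ lies in neither set. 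For atoms this follows from an (ID)-based argument. Alternatively I use the dual convention: tolerantly true iff $\varphi\notin\Delta^*$, and strictly true iff $\varphi\in\Gamma^*$. I will settle on whichever makes the next step go through.
\item Show that the map defined on atoms extends to all P-formulas in agreement with the strong Kleene clauses. This is a truth lemma proved by induction on formulas. The connective steps use exactly the invertibility of the rules together with maximality. For instance, $\varphi\wedge\psi\in\Gamma^*$ iff both $\varphi,\psi\in\Gamma^*$: the forward direction uses ($\wedge$L) read bottom-up, the converse uses it top-down.
\end{enumerate}
Then $\mathsf{M}$ fails $S$, since every formula of $\Gamma_0$ is strictly true and every formula of $\Delta_0$ strictly false. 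Every sequent of $X$ is satisfied: if some $T\in X$ failed, $T$ would be a subsequent of $(\Gamma^*,\Delta^*)$, contradicting the construction.

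\textbf{Main obstacle.} The hard step is the truth lemma, and in particular choosing the canonical three-valued assignment so that both the left clause (for strict truth) and the right clause (for tolerant truth) are respected. This is where the elimination rules are essential: without them, membership of a compound in $\Gamma^*$ does not propagate down to its components.

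Since $\mathcal{L}$-P-formulas are built from atoms, I will first try the cleanest choice: treat each formula as a pair of independent memberships, "in $\Gamma^*$" and "in $\Delta^*$". Whenever it is in neither, I force it into one by maximality, adding it wherever it is consistent to do so. I will check that this yields values in $\{0,\frac12,1\}$ with $1$ meaning "not in $\Delta^*$" and $0$ meaning "not in $\Gamma^*$", and that (ID) guarantees no formula is absent from both.
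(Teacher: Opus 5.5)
Your soundness argument (every rule, read in either direction, preserves $\mathrm{ST}$-satisfaction in a fixed model) is fine, and a maximal unprovable pair is a workable device. However, the canonical valuation you commit to is wrong, and the reason you give for its well-definedness is backwards. In a maximal pair $(\Gamma^*,\Delta^*)$, (ID) shows that no formula lies in \emph{both} sets, since otherwise $\varphi\seq\varphi\in\Th(X)$ would be a finite subsequent of the pair. What the absence of Cut permits is that a formula lies in \emph{neither}. So the correct statement is ``at most one, possibly neither'', not ``at least one''.

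Your definition ($0$ if $\varphi\notin\Gamma^*$, $1$ if $\varphi\notin\Delta^*$, $\tfrac12$ otherwise) therefore assigns $\tfrac12$ to a case that never occurs, and assigns both $0$ and $1$ to the case that does. For instance, take an atomic $r$, $X=\{\emptyset\seq r,\ r\seq\emptyset\}$ and $S=\emptyset\seq\emptyset$. Then $X\nvdash S$ by soundness (take $I(r)=\tfrac12$), yet $r$ can be added to neither side of the pair, so $r\notin\Gamma^*\cup\Delta^*$. Your plan to ``force it into one side by maximality'' cannot work, because maximality says precisely that it cannot be added to either side. The closing check that $\mathsf{M}$ satisfies $X$ also tacitly uses a different convention: it only works if failing $\Pi\seq\Sigma$ means $\Pi\subseteq\Gamma^*$ and $\Sigma\subseteq\Delta^*$.

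The repair is the ``dual convention'' you mention and then abandon: $I(\varphi)=1$ iff $\varphi\in\Gamma^*$, $I(\varphi)=0$ iff $\varphi\in\Delta^*$, and $I(\varphi)=\tfrac12$ iff neither, which is well defined by (ID). The truth lemma then uses three things:
\begin{itemize}
\item maximality in the form ``$\varphi\in\Gamma^*$ iff $(\Gamma^*\cup\{\varphi\},\Delta^*)$ has no finite subsequent in $\Th(X)$'', and likewise for $\Delta^*$;
\item the bidirectional rules;
\item weakening, to merge two witnesses in the cases ($\wedge$R) and ($\vee$L).
\end{itemize}
You also have the roles of the rules reversed. The implication $\varphi\wedge\psi\in\Gamma^*\Rightarrow\varphi,\psi\in\Gamma^*$ uses ($\wedge$L) top-down, while the converse uses it bottom-up. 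The eliminations supply the upward direction (components to compound), which is exactly what is needed to show that $\mathsf{M}$ satisfies $X$. The downward direction needs only introductions and suffices for showing that $S$ fails.

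Once corrected, your route differs from the one the paper relies on (from \cite{DP}, mirrored in the proof of Theorem~\ref{thm: completeness}). There, $\Th(X)$ is extended to a prime $S$-consistent theory $T$ via Lemma~\ref{meet irr} and Lemma~\ref{lemma: pcp}, and $\tfrac12$ is the case where both $\emptyset\seq\varphi$ and $\varphi\seq\emptyset$ lie in $T$. The set of sequents not contained in your pair is exactly such a prime theory, with $\varphi\notin\Gamma^*$ iff $\varphi\seq\emptyset\in T$. Building the pair directly makes primality automatic, so you need no analogue of Lemma~\ref{lemma: pcp}.
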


Recall that the \emph{external consequence relation} of a sequent calculus $\mathcal{C}$, of signature $\mathcal{L}_\mathcal{C}$, holds between a set $\Gamma$ of $\mathcal{L}_\mathcal{C}$-formulas and an $\mathcal{L}_\mathcal{C}$-formula $\varphi$ when $\seq \varphi$ is derivable in $\mathcal{C}$ from $\{ \seq \psi \mid \psi \in \Gamma\}$ \cite{Avron}. Interestingly, this consequence relation coincides with a familiar logic:

\begin{theorem}[\cite{Pynko}, see also \cite{Barrio, Prenosil}]
The external consequence relation of $\mathcal{ST}^P$ coincides with the consequence relation of (propositional) $\mathrm{LP}$.
\end{theorem}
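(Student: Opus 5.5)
The plan is to go through the preceding theorem from \cite{DP}, which identifies derivability in $\mathcal{ST}^P$ with local metainferential $\mathrm{ST}$-validity. By that theorem, $\{\seq \psi \mid \psi \in \Gamma\} \vdash_{\mathcal{ST}^P} {\seq \varphi}$ holds iff $\{\seq \psi \mid \psi \in \Gamma\} \models_{\mathrm{ST}} {\seq \varphi}$. So it suffices to show that this semantic condition is equivalent to $\Gamma \models_{\mathrm{LP}} \varphi$. Here $\mathrm{LP}$ is evaluated over propositional strong Kleene valuations into $\{0,\frac{1}{2},1\}$, with designated values $\{1,\frac{1}{2}\}$.

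The central step is to unfold the definition of $\mathrm{ST}$-satisfaction for sequents with empty antecedent. A model $\mathsf{M}$ $\mathrm{ST}$-satisfies $\seq \psi$ iff $I(\psi)\in\{1,\frac{1}{2}\}$, i.e.\ iff $\psi$ is tolerantly true. This is exactly the condition that $\psi$ takes a designated $\mathrm{LP}$-value. Consequently, $\{\seq \psi \mid \psi\in\Gamma\}\models_{\mathrm{ST}}{\seq\varphi}$ says the following: every $\mathrm{ST}$-model giving every $\psi\in\Gamma$ a designated value also gives $\varphi$ a designated value. This is $\mathrm{LP}$-consequence, except that it is stated over first-order $\mathrm{ST}$-models rather than propositional valuations.

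The remaining step, which is also the only real obstacle, is bridging these two classes of models for P-formulas. Treat the atomic P-formulas as the propositional atoms of $\mathrm{LP}$.
\begin{itemize}
\item Every $\mathrm{ST}$-model $\langle D,I\rangle$ induces a strong Kleene valuation of the atoms, and this valuation agrees with $I$ on all P-formulas, since the connective clauses coincide.
\item Conversely, every valuation $v$ of the atomic P-formulas must be realised by some $\mathrm{ST}$-model. For this I would take the term model: let $D$ be the set of terms, interpret each function symbol syntactically, set $I(x)=x$ for each variable, and let $I(P^n)(t_1,\dots,t_n)=v(P^n(t_1,\dots,t_n))$. This is well defined because distinct atomic formulas are distinct tuples of a predicate and terms.
\end{itemize}
With this correspondence, the two quantifications over models range over the same valuations of P-formulas, and the equivalence follows.

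If instead ``propositional $\mathrm{LP}$'' is meant over a purely propositional language, the same argument goes through more simply: take zero-ary relation symbols as the propositional variables, so that a valuation is literally an assignment of $I(P^0)$.
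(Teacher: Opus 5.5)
Your proposal is correct. The paper gives no proof of this statement: it is quoted as a known result (Pynko, with Barrio et al.\ and P\v{r}enosil as further references). Your argument is therefore an independent route, obtaining the statement as a corollary of the immediately preceding strong completeness theorem for $\mathcal{ST}^P$.

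Given that theorem, what remains is exactly what you check. First, $\mathsf{M} \models_{\mathrm{ST}} \emptyset \seq \psi$ just says $I(\psi) \in \{1, \frac{1}{2}\}$, i.e.\ $\psi$ is $\mathrm{LP}$-designated. Second, on P-formulas the $\mathrm{ST}$-models induce precisely all strong Kleene valuations of the atomic P-formulas. For the second point, the fact doing the work in your term model is that $I(t) = t$ for every term, by induction on terms. Hence $I(P^n(t_1,\dots,t_n)) = v(P^n(t_1,\dots,t_n))$. Your well-definedness remark is not really the point.

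What your route buys is brevity and transparency: it shows the statement to be just the restriction of local metainferential $\mathrm{ST}$-validity to sequents with empty antecedent. What it costs is dependence on the stronger completeness theorem for the entire derivability relation of $\mathcal{ST}^P$, with its prime-theory and canonical-model machinery. The statement itself only concerns derivations among sequents of the form $\emptyset \seq \psi$, and the cited literature obtained it before that completeness theorem was available.
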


This approach is ineffective for the full first-order calculus, because not all the usual rules for the universal and the existential quantifiers are invertible. For future reference, however, let us introduce here an appropriate version of the sequent calculus for classical first order logic minus the rule of Cut, a slight variant of the one in \cite{Ripley}. The calculus $\mathcal{ST}^Q$ differs from $\mathcal{ST}^P$ in the following aspects:
\begin{itemize}
    \item its syntactic units are $\mathcal{L}$-sequents (possibly containing quantifiers) as opposed to $\mathcal{L}$-P-sequents;
    \item it contains introduction rules only, i.e., the bottom-up directions of the logical rules in $\mathcal{ST}^P$ are deleted;
    \item it contains the additional rules
    \begin{prooftree}
    \def\fCenter{\seq}
    \Axiom$ \varphi[x \mapsto t], \Gamma  \fCenter \Delta$\RightLabel{\scriptsize ($\forall$L)}
    \UnaryInf$\forall x\,\varphi(x), \Gamma \fCenter \Delta$

    \def\fCenter{\seq}
    \Axiom$\Gamma \fCenter \Delta, \varphi[x \mapsto t]$\RightLabel{\scriptsize ($\forall$R)}
    \UnaryInf$\Gamma \fCenter \Delta, \forall x\,\varphi(x)$
    
    \noLine
    \BinaryInfC{}
    \end{prooftree}
    (and dual rules for the existential quantifier), where in both rules the notation $\varphi[x \mapsto t]$ denotes the result of substituting $t$ for $x$ in $\varphi$.
   This notation assumes that the substitution does not result in the capture of any variables which occur in $t$; in the right introduction rule, moreover, $x$ does not occur in $\Gamma \cup \Delta$ and thus the eigenvariable condition has to be respected.
\end{itemize}

 Due to the non-invertibility of the left introduction for the universal quantifier, and dually, of the right introduction for the existential quantifier, $\mathcal{ST}^Q$ cannot be supplemented with elimination rules and so its derivability relation does not match local metainferential validity.

In the rest of this paper, we will aim at upgrading $\mathcal{ST}^Q$ to a strongly complete calculus for $\mathrm{ST}$.

\section{The Calculus $\STH$}\label{ignazioputzu}

For a start, let us bring into sharper focus the non-invertibility issue for the quantifier rules in $\mathcal{ST}^Q$. By way of example, consider the left rule for the universal quantifier:
\begin{prooftree}
    \def\fCenter{\seq}
    \Axiom$ \varphi[x \mapsto t], \Gamma  \fCenter \Delta$\RightLabel{\scriptsize ($\forall$L)}
    \UnaryInf$\forall x\,\varphi(x), \Gamma \fCenter \Delta$
    \end{prooftree}
Were the rule invertible, this would license the inference of, say, $\psi$ from $\varphi[x \mapsto t]$ on the assumption that $\psi$ is inferrable from $\forall x\,\varphi(x)$ -- which doesn't work in general, as we are trading a stronger hypothesis for a weaker one. Things would be different if in the upper sequent, instead of a generic term $t$, we substituted for the free variable $x$ a term that \emph{witnesses the availability of the universal sentence $\forall x\,\varphi(x)$}. Thus, a way to get over the hump is to expand $\mathcal{L}$ to a language where there are enough such terms -- i.e., where all universal and existential sentences are witnessed, like in the Henkin-style construction of the canonical model for classical first-order logic. We will explore this avenue in the present section.

\subsection{Presentation of the calculus}

  Consider a first-order signature $\mathcal{L}$
  which consists of relation symbols and function symbols of finite (possibly zero) arity. We assume that the signature contains at least one relation symbol (otherwise the set of formulas is empty). The \emph{immediate Henkin expansion} of $\mathcal{L}$ is the signature that expands $\mathcal{L}$ by a new constant $\wit(\forall x\,\varphi)$ for each universal $\mathcal{L}$-formula\footnote{For the benefit of readers familiar with Henkin's completeness proof for first-order logic, we emphasize that $\wit(\forall x\,\varphi)$ and $\wit(\exists x\,\varphi)$ are added even for formulas $\varphi$ which contain free variables other than $x$.} $\forall x\,\varphi$ (called a \emph{universal Henkin constant}), and by a new constant $\wit(\exists x\,\varphi)$ for each existential $\mathcal{L}$-formula $\exists x\,\varphi$ (called an \emph{existential Henkin constant}). If $\mathcal{L}_{i}$ for $i \in \omega$ is a sequence of signatures such that $\mathcal{L}_{0} = \mathcal{L}$ and $\mathcal{L}_{i+1}$ is the immediate Henkin expansion of $\mathcal{L}_{i}$, we call $\Hen \mathcal{L} \assign \bigcup_{i \in \omega} \mathcal{L}_{i}$ the \emph{Henkin expansion} of $\mathcal{L}$. Terms and formulas of $\Hen \mathcal{L}$ will be called \emph{$\mathcal{L}$-Henkin} terms and formulas. The interpretation of the Henkin constants is subject to the following constraints.
  
\begin{definition}
Let $\mathsf{M} = \langle D, I\rangle$ be an $\mathrm{ST}$-model for $\mathcal{L}$. A \emph{Henkin expansion} of $\mathsf{M}$ is an $\mathrm{ST}$-model $\mathsf{M}^H = \langle D, I^H \rangle$ for $\Hen \mathcal{L}$, where $I^H(\varphi) = I(\varphi)$ for each $\mathcal{L}$-formula $\varphi$, and moreover for each $\mathcal{L}$-Henkin formula $\psi$:
\begin{align*}
  & I(\psi[x \mapsto \wit (\forall x\, \psi)]) = I(\forall x\, \psi), & & I(\psi[x \mapsto \wit (\exists x\, \psi)]) = I(\exists x\, \psi).
\end{align*}
An \emph{$\mathcal{L}$-Henkin model} is an $\mathrm{ST}$-model for $\Hen \mathcal{L}$ which is the Henkin expansion of some $\mathrm{ST}$-model for $\mathcal{L}$.
\end{definition}
  Since formulas are interpreted in an $\mathrm{ST}$-model by elements of a finite linearly ordered algebra,
 every $\mathrm{ST}$-model has a Henkin expansion. In general, this expansion need not be unique.

  Sequents in the signature $\Hen \mathcal{L}$ will be called \emph{$\mathcal{L}$-Henkin sequents}.

\begin{definition}
Let $\Gamma \seq \Delta$ be an $\mathcal{L}$-Henkin sequent, where we fix enumerations $ \langle \gamma_1,...,\gamma_n \rangle$ of $\Gamma$ and $ \langle \delta_1,...,\delta_m \rangle$ of $\Delta $. The \emph{formula translation} $\boldtau(\Gamma \seq \Delta)$ of $\Gamma \seq \Delta$ is defined as follows (all disjunctions are associated to the left):
\begin{itemize}
    \item $\lnot\gamma_1 \lor... \lor \lnot \gamma_n \lor \delta_1 \lor... \lor \delta_m$, if $n,m \geq 1$;
    \item $\lnot\gamma_1 \lor... \lor \lnot \gamma_n$, if  $n \geq 1, m = 0$;
    \item $\delta_1 \lor ... \lor \delta_m$, if $n = 0, m \geq 1$;
    \item $\varphi_0 \land \lnot \varphi_0$, where $\varphi_0$ is a fixed atomic $\mathcal{L}$-formula, if $n,m = 0$.
\end{itemize}
\end{definition}
 
Throughout the following, sets of $\mathcal{L}$-Henkin sequents will be denoted by $X$, $Y,...$ and individual $\mathcal{L}$-Henkin sequents by $S, S'...$ It is not hard to prove that:

\begin{lemma}\label{claudioternullo}
  The sequents $S$ and $\emptyset \seq \boldtau(S)$ are interderivable in $\STH$.
\end{lemma}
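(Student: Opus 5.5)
The plan is to use only the propositional part of $\STH$, which I take to contain all the bidirectional rules of $\mathcal{ST}^P$ (Figure~\ref{fig: rules}) applied to $\mathcal{L}$-Henkin formulas. The quantifier rules will play no role. Fix the enumerations $\gamma_1,\dots,\gamma_n$ of $\Gamma$ and $\delta_1,\dots,\delta_m$ of $\Delta$ used to build $\boldtau(\Gamma\seq\Delta)$, and argue by cases on whether $n$ and $m$ are zero.

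\textbf{From $S$ to $\emptyset \seq \boldtau(S)$.} Starting from $\gamma_1,\dots,\gamma_n \seq \delta_1,\dots,\delta_m$, apply ($\lnot$R) $n$ times to move every $\gamma_i$ to the right as $\lnot\gamma_i$. This gives $\emptyset\seq \lnot\gamma_1,\dots,\lnot\gamma_n,\delta_1,\dots,\delta_m$. Then apply ($\lor$R) repeatedly, top-down. The first application combines $\lnot\gamma_1$ and $\lnot\gamma_2$ (or whichever are the first two disjuncts), and each later step adds the next disjunct on the right. Since sequents are sets, we must check that no premise formula coincides with an intermediate disjunction, which would make the set collapse. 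A disjunction has strictly greater complexity than each of its disjuncts, so a clash could only occur between the compound formula already built and a later listed element. If such a coincidence occurs, it is harmless: weakening (WR) can reintroduce a copy where needed, and because the context is a set, duplicates vanish. This bookkeeping is routine. In the degenerate cases $n+m\le 1$ there is nothing to combine. In the case $n=m=0$ we have $S=\emptyset\seq\emptyset$, and (WR) gives $\emptyset\seq\varphi_0\land\lnot\varphi_0$.

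\textbf{From $\emptyset \seq \boldtau(S)$ to $S$.} Here we run the same chain bottom-up, using the inverse (elimination) directions of ($\lor$R) and ($\lnot$R). These directions are available precisely because the rules are bidirectional. Splitting the left-associated disjunction one step at a time recovers $\emptyset\seq\lnot\gamma_1,\dots,\lnot\gamma_n,\delta_1,\dots,\delta_m$. Inverting ($\lnot$R) then returns each $\gamma_i$ to the left.

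The only nontrivial case is $n=m=0$: we must derive $\emptyset\seq\emptyset$ from $\emptyset\seq\varphi_0\land\lnot\varphi_0$. Invert ($\land$R) twice to get both $\emptyset\seq\varphi_0$ and $\emptyset\seq\lnot\varphi_0$, and invert ($\lnot$R) on the second to get $\varphi_0\seq\emptyset$. Combining these would ordinarily require Cut, which is unavailable. Instead, I would try the following.
\begin{itemize}
\item Weaken $\emptyset\seq\varphi_0$ to $\emptyset\seq\varphi_0,\varphi_0\land\lnot\varphi_0$. This changes nothing useful on its own.
\item Better: from $\emptyset\seq\varphi_0$ apply ($\lnot$L) to obtain $\lnot\varphi_0\seq\emptyset$.
\item From $\varphi_0\seq\emptyset$ and $\lnot\varphi_0\seq\emptyset$, ($\lor$L) gives $\varphi_0\lor\lnot\varphi_0\seq\emptyset$.
\end{itemize}
This is still not the empty sequent. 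This step is the main obstacle: every rule of $\mathcal{ST}^P$ preserves a nonempty formula occurrence, so the empty sequent seems underivable from nonempty sequents without a Cut-like move. The resolution, I expect, is the semantics rather than syntax. $\emptyset\seq\varphi_0\land\lnot\varphi_0$ is $\mathrm{ST}$-unsatisfiable, since $I(\varphi_0\land\lnot\varphi_0)\le\frac12$ always, so by whatever completeness the paper intends this sequent derives everything. In any case, $\STH$ presumably includes a rule or convention making the empty sequent interderivable with $\emptyset\seq\varphi_0\land\lnot\varphi_0$. I would check the full rule list of $\STH$ for such a provision, or else read this clause as a stipulated convention, since the choice of $\varphi_0\land\lnot\varphi_0$ mirrors how $\mathrm{ST}$ treats the empty sequent.
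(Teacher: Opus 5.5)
For every $S=\Gamma\seq\Delta$ with $\Gamma\cup\Delta\neq\emptyset$ your argument is correct. The paper gives no proof to compare against. One direction is ($\lnot$R$\downarrow$) followed by ($\lor$R$\downarrow$), and the other is ($\lor$R$\uparrow$) followed by ($\lnot$R$\uparrow$). Reading $\Delta,\varphi$ as $\Delta\cup\{\varphi\}$, the collision bookkeeping you worry about disappears: at each step the premise and conclusion of the rule instance are exactly the sets you need. Also $\emptyset\seq\emptyset$ yields $\emptyset\seq\varphi_0\land\lnot\varphi_0$ by (WR), as you say.

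The genuine gap is the remaining case: deriving $\emptyset\seq\emptyset$ from $\emptyset\seq\varphi_0\land\lnot\varphi_0$. It cannot be filled, because this instance of the statement is false.

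\begin{itemize}
\item Your syntactic observation is correct and in fact proves underivability. Every rule of $\STH$ has a conclusion containing at least one formula. This covers (ID), (WL), (WR), and all propositional, witness and quantifier rules in either direction. Hence $X\vdash_{\STH}\emptyset\seq\emptyset$ only if $\emptyset\seq\emptyset\in X$.
\item Your semantic rescue rests on a confusion between strict and tolerant truth. A formula on the right of a sequent only needs to be tolerantly true: $\emptyset\seq\psi$ is $\mathrm{ST}$-satisfied iff $I(\psi)\in\{1,\frac{1}{2}\}$. Since $I(\varphi_0\land\lnot\varphi_0)=\frac{1}{2}$ whenever $I(\varphi_0)=\frac{1}{2}$, the sequent $\emptyset\seq\varphi_0\land\lnot\varphi_0$ is satisfiable. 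But $\emptyset\seq\emptyset$ never is, so soundness forbids the derivation.
\item In the model where every atom takes value $\frac{1}{2}$, every formula takes value $\frac{1}{2}$. So no choice of $\boldtau(\emptyset\seq\emptyset)$ would help, and $\STH$ has no extra rule or convention to appeal to.
\end{itemize}

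The correct conclusion is that the lemma holds for all nonempty $S$. For $S=\emptyset\seq\emptyset$, only the direction from $S$ to $\emptyset\seq\boldtau(S)$ holds. This restriction is harmless for the paper. The only application, in the proof of (3)$\Rightarrow$(1) of Lemma~\ref{Mario}, assumes $\Gamma\cup\Delta\neq\emptyset$ and uses only that direction.
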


\begin{definition}
  An $\mathcal{L}$-Henkin sequent $S$ is an \emph{$\mathrm{ST}^H$-consequence} of a set $X$ of $\mathcal{L}$-Henkin sequents, in symbols $X \models_{\mathrm{ST}^H} S$, in case for each $\mathcal{L}$-Henkin model $\mathsf{M}^{H}$, if $\mathsf{M}^{H} \models_{\mathrm{ST}} S'$ for all $S' \in X$, then $\mathsf{M}^{H} \models_{\mathrm{ST}} S$.
\end{definition}

  We capture this consequence relation between $\mathcal{L}$-Henkin sequents by means of the calculus $\STH$, which extends the calculus $\mathcal{ST}^{P}$ by the rules listed in Figure~\ref{fig: rules1}. Observe that these rules do not require any eigenvariable restriction. We merely require, as usual, that in $\varphi[x \mapsto t]$ the term $t$ can be substituted for $x$. We abbreviate the claim that $S$ is derivable from $X$ in the calculus $\STH$ by $X \vdash_{\STH} S$, and in the names of rules we distinguish the direction (top-down \emph{vs} bottom-up) in which logical rules are applied by means of arrows pointing downwards or upwards.

  To give a flavour of how this calculus works, we choose to present the proof of a first-order logical truth that is often brought as an example in the context of calculi that, like the Epsilon calculus, in general allow more efficient proofs than standard sequent proofs (see e.g.\ \cite{AB, Powell} for a more precise description of this speed-up). Here is a proof of $\seq \exists x (P(x) \rightarrow \forall x\, P(x))$ in $\mathcal{ST^Q}$, where $\varphi \to \psi$ is shorthand for $\lnot \varphi \lor \psi$:
\begin{prooftree}
\def\fCenter{\seq}
\Axiom$P(x) \fCenter P(x)$
\UnaryInf$P(y), P(x) \fCenter P(x), \forall x\, P(x)$
\UnaryInf$P(y) \fCenter P(x), P(x) \rightarrow \forall x\, P(x)$
\UnaryInf$P(y) \fCenter P(x), \exists x\, (P(x) \rightarrow \forall x\, P(x))$
\UnaryInf$P(y) \fCenter \forall x\, P(x), \exists x (P(x) \rightarrow \forall x\, P(x))$
\UnaryInf$\fCenter P(y) \rightarrow \forall x\, P(x), \exists x (P(x) \rightarrow \forall x P(x))$
\UnaryInf$\fCenter \exists x(P(x) \rightarrow \forall x\, P(x)), \exists x(P(x) \rightarrow \forall x P(x))$
\UnaryInf$\fCenter \exists x(P(x) \rightarrow \forall x\, P(x))$
\end{prooftree}
Compare this to a proof of the same sequent in $\STH$, where the third inference is an application of (EWI):
\begin{prooftree}
\def\fCenter{\seq}
\Axiom$P(\wit(\forall x\, P(x))) \fCenter P(\wit(\forall x\, P(x)))$
\UnaryInf$P(\wit(\forall x\, P(x))) \fCenter \forall x\, P(x)$
\UnaryInf$\fCenter P(\wit(\forall x\, P(x))) \rightarrow \forall x\, P(x)$
\UnaryInf$\fCenter P(\wit(\exists x\, (P(x) \rightarrow \forall x\, P(x)))) \rightarrow \forall x\, P(x)$
\UnaryInf$\fCenter \exists x (P(x) \rightarrow \forall x\, P(x))$
\end{prooftree}

\subsection{Soundness and completeness}
  
It is not difficult to show, by induction on the length of derivations, that our calculus is sound.

\begin{figure}
\caption{Additional rules for $\STH$}
\label{fig: rules1}

\medskip

\textbf{Witness introduction rules}

\begin{prooftree}
\def\fCenter{\seq}
\Axiom$\varphi[x \mapsto t], \Gamma \fCenter \Delta$\RightLabel{\scriptsize (UWI)}
\UnaryInf$\varphi[x \mapsto \wit (\forall x\,\varphi)], \Gamma \fCenter \Delta$

\def\fCenter{\seq}
\Axiom$\Gamma \fCenter \Delta, \varphi[x \mapsto  t]$\RightLabel{\scriptsize (EWI)}
\UnaryInf$\Gamma \fCenter \Delta, \varphi[x \mapsto \wit (\exists x\, \varphi)]$

\noLine
\BinaryInfC{}
\end{prooftree}

\medskip

\textbf{Witness elimination rules}

\begin{prooftree}
\def\fCenter{\seq}
\Axiom$\varphi[x \mapsto \wit (\exists x\,\varphi)], \Gamma \fCenter \Delta$\RightLabel{\scriptsize (EWE)}
\UnaryInf$\varphi[x \mapsto t], \Gamma \fCenter \Delta$

\def\fCenter{\seq}
\Axiom$\Gamma \fCenter \Delta, \varphi[x \mapsto \wit (\forall x\, \varphi)]$\RightLabel{\scriptsize (UWE)}
\UnaryInf$\Gamma \fCenter \Delta, \varphi[x \mapsto t]$

\noLine
\BinaryInfC{}
\end{prooftree}

\medskip

\textbf{Logical rules for quantifiers}

\begin{prooftree}
\def\fCenter{\seq}
\Axiom$\varphi[x \mapsto \wit (\forall x\,\varphi)], \Gamma \fCenter \Delta$\RightLabel{\scriptsize ($\forall$LW)}
\doubleLine
\UnaryInf$\forall x\, \varphi, \Gamma \fCenter \Delta$

\def\fCenter{\seq}
\Axiom$\Gamma \fCenter \Delta, \varphi[x \mapsto \wit (\forall x\, \varphi)]$\RightLabel{\scriptsize ($\forall$RW)}
\doubleLine
\UnaryInf$\Gamma \fCenter \Delta, \forall x\, \varphi$

\noLine
\BinaryInfC{}
\end{prooftree}

\begin{prooftree}
\def\fCenter{\seq}
\Axiom$\varphi[x \mapsto \wit (\exists x\,\varphi)], \Gamma \fCenter \Delta$\RightLabel{\scriptsize ($\exists$LW)}
\doubleLine
\UnaryInf$\exists x\, \varphi, \Gamma \fCenter \Delta$

\def\fCenter{\seq}
\Axiom$\Gamma \fCenter \Delta, \varphi[x \mapsto \wit (\exists x\, \varphi)]$\RightLabel{\scriptsize ($\exists$RW)}
\doubleLine
\UnaryInf$\Gamma \fCenter \Delta, \exists x\, \varphi$

\noLine
\BinaryInfC{}
\end{prooftree}

\medskip

\end{figure}

\begin{theorem}
  Let $X \cup \{ S \}$ be a set of $\mathcal{L}$-sequents. If $X \vdash_{\mathcal{ST}^H} S$, then $X \models_{\mathrm{ST}^H} S$.
\end{theorem}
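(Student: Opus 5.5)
We argue by induction on the length of a derivation of $S$ from $X$ in $\STH$, proving the stronger local claim: for each fixed $\mathcal{L}$-Henkin model $\mathsf{M}^H = \langle D, I^H\rangle$, if $\mathsf{M}^H$ $\mathrm{ST}$-satisfies every sequent of $X$, then it $\mathrm{ST}$-satisfies every sequent occurring in the derivation. The sequents of $X$ are satisfied by hypothesis. Instances of (ID) are satisfied because $I^H(\varphi)$ is either $\neq 1$ or $= 1 \geq \frac{1}{2}$. For every remaining rule, in each direction in which it may be applied, it suffices to show that it preserves $\mathrm{ST}$-satisfaction in $\mathsf{M}^H$: if $\mathsf{M}^H$ satisfies the premisses, it satisfies the conclusion. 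Since this is a model-by-model (local) property, it transfers along the whole derivation.

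The weakening rules are immediate. For the propositional rules of $\mathcal{ST}^P$ in both directions, we use that $\mathsf{M}^H \models_{\mathrm{ST}} \Gamma \seq \Delta$ iff $\min_{\gamma\in\Gamma} I^H(\gamma) \neq 1$ or $\max_{\delta \in \Delta} I^H(\delta) \neq 0$, i.e.\ iff $\max(1 - \min I^H[\Gamma], \max I^H[\Delta]) \geq \frac{1}{2}$. The strong Kleene clauses make the value $\max(\ldots)$ of premiss and conclusion coincide for ($\land$L), ($\lor$R), ($\lnot$L), ($\lnot$R). For the binary rules, the value of the conclusion is the minimum of the values of the two premisses. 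In all cases satisfaction is preserved in both directions. This is the argument of \cite{DP}, so we can be brief.

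For the quantifier rules ($\forall$LW), ($\forall$RW), ($\exists$LW), ($\exists$RW), in both directions, the Henkin constraint gives $I^H(\varphi[x\mapsto \wit(\forall x\,\varphi)]) = I^H(\forall x\,\varphi)$, and likewise for $\exists$. So premiss and conclusion differ only by replacing one formula with another of the same value, and satisfaction is preserved both ways. For the witness rules we use the semantic facts $I^H(\forall x\,\varphi) \leq I^H(\varphi[x\mapsto t]) \leq I^H(\exists x\,\varphi)$, which follow from the min/max clauses together with the substitution lemma $I^H(\varphi[x\mapsto t]) = I'(\varphi)$ for the $x$-variant $I'$ with $I'(x) = I^H(t)$. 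This lemma needs $t$ to be substitutable for $x$, which the rules require.
\begin{itemize}
\item (UWI): the formula on the left is replaced by one of value $I^H(\forall x\,\varphi) \leq I^H(\varphi[x\mapsto t])$. Lowering a left formula preserves satisfaction: if $I^H(\varphi[x\mapsto t]) \neq 1$, then the smaller value is also $\neq 1$.
\item (EWE): dually, the left formula's value rises from $I^H(\exists x\,\varphi)$ down to $I^H(\varphi[x\mapsto t])$, i.e.\ it goes down, so it is again a lowering on the left.
\item (EWI) and (UWE): these raise a right formula, to $I^H(\exists x\,\varphi)$ and to $I^H(\varphi[x\mapsto t]) \geq I^H(\forall x\,\varphi)$ respectively, which also preserves satisfaction.
\end{itemize}

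The main point requiring care is the bookkeeping about which model the Henkin equations hold in. The definition states them for $I$ on all $\mathcal{L}$-Henkin formulas $\psi$, including those with free variables, and these equations must also hold for the $x$-variants used in the substitution lemma. We will read the definition as constraining $I^H$ uniformly, i.e.\ as holding under every assignment of the free variables. With that reading, each rule check is a one-line inequality, and the induction closes.
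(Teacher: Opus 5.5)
Your argument is correct and is the paper's approach. The paper only says soundness follows "by induction on the length of derivations", and your rule-by-rule check that each rule, in each direction, preserves $\mathrm{ST}$-satisfaction in a fixed Henkin model is exactly that induction.

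One correction: delete your final paragraph. Its claim is false, and the reading it adopts is not the paper's.

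\emph{It is not needed.} None of your checks uses a Henkin equation at an $x$-variant.
The substitution lemma $I^H(\varphi[x \mapsto t]) = I'(\varphi)$, with $I'(x) = I^H(t)$, holds in any $\mathrm{ST}$-model for $\Hen \mathcal{L}$. Henkin constants are just constants there, and $I'$ need not be a Henkin model.
The equation $I^H(\varphi[x \mapsto \wit(\forall x\,\varphi)]) = I^H(\forall x\,\varphi)$, and its existential analogue, is only ever invoked for $I^H$ itself, at its own assignment. In this paper an $\mathrm{ST}$-model includes the values $I(x)$ of the variables.

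\emph{It would be harmful.} Take $D = \{a,b\}$ and $I(P)(d,e) = 1$ iff $d \neq e$, and $0$ otherwise. Then $I(\forall x\,P(x,y)) = 0$ under every assignment. A single element interpreting $\wit(\forall x\,P(x,y))$ would then have to equal both $a$ and $b$.
So under your reading some $\mathrm{ST}$-models have no Henkin expansion, contrary to the paper's remark after the definition. You would also be proving soundness with respect to a smaller class of models, which is a weaker statement than the intended one.

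Your checks already establish the intended, assignment-by-assignment version, so simply drop the reinterpretation.
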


The converse direction requires the construction of a canonical model along the lines of the completeness proof for $\mathcal{ST}^P$ given in \cite{DP}. The cumbersome construction employed on that occasion will be considerably streamlined, though. In order to do so, let us first recapitulate some elementary concepts and facts from lattice theory. 

Recall that a lattice $\mathbf{L}$ is \emph{complete} if meets and joins of arbitrary subsets of $L$ exist in $\mathbf{L}$. If $\mathbf{L}$ is a complete lattice, then $a \in L$ is said to be {\em compact} if for any subset $S\subseteq L$ satisfying $a\le\bigvee S$, there exists a finite subset $S'\subseteq S$ such that $a\le\bigvee S'$. A complete lattice $\mathbf{L}$ is {\em algebraic} if each of its elements is a join of compact elements.

An element $a$ of a lattice $\mathbf{L}$ is said to be {\em meet-irreducible} if $\bigwedge X=a$,
 where $X$ is a finite subset of $L$,
implies $a\in X$, and {\em meet-prime} if $\bigwedge X\le a$,
  where $X$ is a finite subset of $L$
implies $x\le a$, for some $x\in X$. It is easily proved that every meet-prime element is meet-irreducible, and that if $\mathbf{L}$ is distributive, the converse also holds.

An element $a$ of a complete lattice $\mathbf{L}$ is said to be {\em completely meet-irreducible} if $\bigwedge X=a$,
 where $X \subseteq L$, implies $a\in X$, and {\em completely meet-prime} if $\bigwedge X\le a$,
 where $X \subseteq L$, implies $x\le a$, for some $x\in X$. Again, it is easily proved that every (completely) meet-prime element is (completely) meet-irreducible, and that if binary joins distribute over binary (arbitrary) meets in $\mathbf{L}$, then
 the converse also holds.

Algebraic lattices are especially well-behaved from many viewpoints. In particular, we will use below the following result, which can be established via a standard application of Zorn's lemma:

\begin{lemma}
\label{meet irr}\cite{Birkhoff} Let $\mathbf{L}$ be an algebraic lattice.
Furthermore, let $a\in L$ and let $c$ be a compact element of $L$ such that
$c\nleq a$. Then there is a completely meet-irreducible element $b\in L$ such
that $a\leq b$ and $c\nleq b$.
\end{lemma}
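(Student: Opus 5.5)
The plan is the standard Zorn's lemma argument, run either inside the poset $\mathord{\uparrow}a = \set{x \in L}{a \leq x}$ or directly on chains of elements avoiding $c$. Let
\[ P = \set{x \in L}{a \leq x \text{ and } c \nleq x}, \]
ordered by $\leq$. $P$ is nonempty because $a \in P$ by hypothesis. The aim is to show that $P$ has a maximal element and that any maximal element of $P$ is completely meet-irreducible.

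\textbf{Step 1: chains in $P$ have upper bounds in $P$.} Let $C \subseteq P$ be a nonempty chain and put $u = \bigvee C$, which exists because $\mathbf{L}$ is complete. Clearly $a \leq u$. Suppose $c \leq u$. Since $c$ is compact, there is a finite $C' \subseteq C$ with $c \leq \bigvee C'$. As $C$ is a chain, $\bigvee C'$ is the largest element of $C'$ (if $C'$ is empty, $\bigvee C'$ is the bottom element, which lies below any element of $C$). Either way $c$ lies below some element of $C$, contradicting $C \subseteq P$. Hence $u \in P$, and Zorn's lemma yields a maximal element $b$ of $P$. This is the one place where compactness of $c$ is used; algebraicity of $\mathbf{L}$ is not actually needed beyond completeness.

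\textbf{Step 2: a maximal $b$ is completely meet-irreducible.} Suppose $b = \bigwedge X$ with $X \subseteq L$, and assume for contradiction that $b \notin X$. Every $x \in X$ satisfies $x \geq b$ and $x \neq b$, so $x > b \geq a$. By maximality of $b$ in $P$, no such $x$ belongs to $P$, so $c \leq x$ for every $x \in X$. Then $c \leq \bigwedge X = b$, contradicting $b \in P$.

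The degenerate case must be handled: if $X = \emptyset$, then $b = \bigwedge \emptyset = \top$. But $c \leq \top$, so $\top \notin P$ and $b \neq \top$; this case cannot arise. Hence $b \in X$, and $b$ is completely meet-irreducible with $a \leq b$ and $c \nleq b$.

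The main point of care is Step 1, namely using compactness together with the chain property to keep the supremum inside $P$. Everything else follows directly from maximality.
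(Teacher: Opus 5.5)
Your proposal is correct and is the standard Zorn's lemma argument that the paper has in mind. The paper gives no proof of its own; it cites Birkhoff and says the lemma follows by a standard application of Zorn's lemma. In your version, compactness of $c$ keeps joins of chains inside $\{x \in L : a \leq x,\ c \nleq x\}$, and a maximal element of this set is completely meet-irreducible. Your side remark is also right: the argument uses only completeness of $\mathbf{L}$ and compactness of $c$, and algebraicity matters only for the further consequence that every element is a meet of completely meet-irreducible elements.
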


Now for the construction of the canonical model. We start by adapting some definitions from \cite[Def.~18.15]{DP}.

\begin{definition}
An \emph{$\mathcal{L}$-Henkin theory} is a set $T$ of $\mathcal{L}$-Henkin sequents which
contains all the provable sequents of $\STH$ and is
closed with respect to all the rules of $\STH$. 
\end{definition}
A set of $\mathcal{L}$-Henkin sequents $T$ is thus an $\mathcal{L}$-Henkin theory if and only if it is deductively closed, i.e., $T\vdash_{\mathcal{ST}^H} S$ implies $S\in T$. The $\mathcal{L}$-Henkin theory generated by $X$ will be denoted by $\Th(X)$.
The set $\mathfrak{T}$ of all $\mathcal{L}$-Henkin theories is the universe of an algebraic lattice whose compact elements have the form $\Th(X)$, for $X$ a finite set of $\mathcal{L}$-Henkin sequents. When this is not prejudicial to comprehension, we abbreviate ``$\mathcal{L}$-Henkin theory'' by ``theory''.
\begin{definition}\label{cataratta}
A theory
$T$ is:

\begin{itemize}
\item $S$\emph{-consistent}, if $S\notin T$;

\item \emph{prime}, if $\Gamma \seq \Delta\in T$ implies that either $\gamma \seq \emptyset \in T$ for some $\gamma \in \Gamma$ or $\emptyset \seq \delta \in T$ for some $\delta \in \Delta$, unless $\Gamma \cup \Delta = \emptyset$;

\item \emph{complete}, if for all formulas $\varphi$, either $\emptyset \seq \varphi\in T$ or $\varphi\seq \emptyset \in T$;

\item \emph{consistent}, if for all formulas $\varphi$, not both $\emptyset \seq
\varphi\in T$ and $\varphi\seq \emptyset \in T$.
\end{itemize}
\end{definition}

  Observe that every prime theory $T$ is complete, because $\varphi\seq\varphi\in T$ for each~$\varphi$. It will be convenient to introduce the following notation:
\begin{align*}
  & (\Gamma_{1} \seq \Delta_{1}) \sqcup (\Gamma_{2} \seq \Delta_{2}) \assign \Gamma_{1}, \Gamma_{2} \seq \Delta_{1}, \Delta_{2}, & & X \sqcup S' \assign \set{S \sqcup S'}{S \in X}.
\end{align*}

\begin{lemma}\label{Mario}
  Let $T$ be a theory. The following are equivalent:
  \begin{enumerate}
      \item $T$ is prime;
      \item $S_{1} \sqcup S_{2} \in T$ implies that either $S_{1} \in T$ or $S_{2} \in T$;
      \item $\emptyset \seq \varphi \vee \psi \in T$ implies that either $\emptyset \seq \varphi$ or $\emptyset \seq \psi$.
  \end{enumerate}
\end{lemma}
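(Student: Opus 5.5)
We prove the cycle (1) $\Rightarrow$ (3) $\Rightarrow$ (2) $\Rightarrow$ (1). Throughout we use that a theory is closed under all rules of $\STH$, in both directions for the logical rules of $\mathcal{ST}^P$.

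\emph{(1) $\Rightarrow$ (3).} If $\emptyset \seq \varphi \vee \psi \in T$, apply the bottom-up direction of ($\lor$R) to get $\emptyset \seq \varphi, \psi \in T$. Since $\Gamma = \emptyset$ and $\Delta = \{\varphi,\psi\} \neq \emptyset$, primeness gives $\emptyset \seq \varphi \in T$ or $\emptyset \seq \psi \in T$. The degenerate case $\varphi = \psi$ is trivial.

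\emph{(3) $\Rightarrow$ (2).} Suppose $S_1 \sqcup S_2 \in T$. We use Lemma~\ref{claudioternullo}: every sequent $S$ is interderivable with $\emptyset \seq \boldtau(S)$, so membership in $T$ is governed by these translations.
\begin{itemize}
\item If $S_1$ is the empty sequent, then $S_1 \sqcup S_2 = S_2 \in T$, and symmetrically if $S_2$ is empty.
\item Otherwise, from $S_1 \sqcup S_2 = \Gamma_1,\Gamma_2 \seq \Delta_1,\Delta_2$ we obtain $\Gamma_1,\Gamma_2 \seq \Delta_1,\Delta_2, \boldtau(S_1), \boldtau(S_2)$ by weakening. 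We then move each formula to the right with ($\lnot$R) and contract with the disjunctions $\boldtau(S_i)$.
\end{itemize}
Contraction is built in, since sequents are sets. The second case is most cleanly handled semantically-free, by observing that $S \vdash \emptyset \seq \boldtau(S)$. In detail: the left formulas go to the right by ($\lnot$R), and the right-hand side is then merged into the left-associated disjunction by repeated ($\lnot$R), ($\lor$R). Applying this same derivation to $S_1 \sqcup S_2$ yields $\emptyset \seq \boldtau(S_1) \vee \boldtau(S_2)$.

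The only care needed is that the formula $\boldtau(S_1)\vee\boldtau(S_2)$ is obtained by grouping the disjuncts appropriately. This works because ($\lor$R) in the top-down direction can build the two disjunctions $\boldtau(S_1)$ and $\boldtau(S_2)$ separately inside a single multi-conclusion sequent, and then join them. Now (3) gives $\emptyset \seq \boldtau(S_i) \in T$ for some $i$, hence $S_i \in T$ by Lemma~\ref{claudioternullo}.

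\emph{(2) $\Rightarrow$ (1).} Let $\Gamma \seq \Delta \in T$ with $\Gamma \cup \Delta \neq \emptyset$. Write $\Gamma \seq \Delta$ as the $\sqcup$ of the singleton sequents $\gamma \seq \emptyset$ for $\gamma \in \Gamma$ and $\emptyset \seq \delta$ for $\delta \in \Delta$; overlaps are harmless since sequents are pairs of sets. Induction on $|\Gamma|+|\Delta|$ using (2) then yields one of these singleton sequents in $T$.

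The main obstacle is the bookkeeping in (3) $\Rightarrow$ (2). We must verify that $\emptyset \seq \boldtau(S_1) \vee \boldtau(S_2)$ is actually derivable from $S_1 \sqcup S_2$, including the cases where some $S_i$ has an empty side. A side that is empty on both counts has already been dispatched. When only one side is empty, $\boldtau$ simply omits the corresponding disjuncts, and the same derivation goes through.
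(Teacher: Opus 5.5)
Your proof is correct, but it runs the cycle in the opposite direction from the paper, which proves (1)$\Rightarrow$(2)$\Rightarrow$(3)$\Rightarrow$(1).

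\textbf{The paper's route.} Its (1)$\Rightarrow$(2) is immediate from primeness followed by (WL), (WR). Its (2)$\Rightarrow$(3) is essentially your (1)$\Rightarrow$(3): it uses ($\lor$R$\uparrow$) and $(\emptyset \seq \varphi,\psi) = (\emptyset\seq\varphi)\sqcup(\emptyset\seq\psi)$. Its (3)$\Rightarrow$(1) passes from $\Gamma\seq\Delta$ to $\emptyset\seq\boldtau(\Gamma\seq\Delta)$ by Lemma~\ref{claudioternullo}, tacitly iterates (3) along the left-associated disjunction to isolate one disjunct, and finishes with ($\lnot$R$\uparrow$). So Lemma~\ref{claudioternullo} does all the translation work for a single sequent, and the iteration is left implicit.

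\textbf{Your route.} You make the iteration an explicit induction on $|\Gamma|+|\Delta|$ in (2)$\Rightarrow$(1), and you use (3) only once. You pay for this in (3)$\Rightarrow$(2), which needs a genuinely new derivation of $\emptyset\seq\boldtau(S_1)\vee\boldtau(S_2)$ from $S_1\sqcup S_2$. Lemma~\ref{claudioternullo} does not provide it: running its derivation on $S_1\sqcup S_2$ gives $\boldtau(S_1\sqcup S_2)$, a differently grouped formula in which shared formulas appear once.

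\textbf{On that derivation.} Your final description is sound: you build both disjunctions side by side. This works because in set-sequents a formula in, e.g., $\Delta_1\cap\Delta_2$ can stay in the context while it is absorbed into $\boldtau(S_1)$. Your earlier talk of weakening by $\boldtau(S_1),\boldtau(S_2)$ and then ``contracting'' is not a derivation and should go.

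A cleaner way is the context trick from Lemma~\ref{lemma: pcp}:
\begin{enumerate}
\item Carry $S_2$ as side context through the derivation of $\emptyset\seq\boldtau(S_1)$ from $S_1$, reaching $\Gamma_2\seq\Delta_2,\boldtau(S_1)$.
\item Carry $\boldtau(S_1)$ as side context through the derivation of $\emptyset\seq\boldtau(S_2)$ from $S_2$, reaching $\emptyset\seq\boldtau(S_1),\boldtau(S_2)$.
\item Close with ($\lor$R$\downarrow$).
\end{enumerate}
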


\begin{proof}
  (1) implies (2). Let $S_{1} = \Gamma \seq \Delta$ and $S_{2} = \Pi \seq \Sigma$. If $S_{1} \sqcup S_{2} \in T$, then because $T$ is prime either $\gamma \seq \emptyset \in T$ or $\emptyset \seq \delta$ or $\pi \seq \emptyset$ or $\emptyset \seq \sigma$ for some $\gamma \in \Gamma$, $\delta \in \Delta$, $\pi \in \Pi$, or $\sigma \in \Sigma$. Therefore by (WL), (WR) either $\Gamma \seq \Delta \in T$ (in the first two cases) or $\Pi \seq \Sigma \in T$ (in the last two cases).

  (2) implies (3). Let $\emptyset \seq \varphi \lor \psi \in T$. Then $\emptyset \seq \varphi,\psi \in T$. But $(\emptyset \seq \varphi, \psi) = (\emptyset \seq \varphi) \sqcup (\emptyset \seq \psi)$, so either $\emptyset \seq \varphi \in T$ or $\emptyset \seq \psi \in T$.

  (3) implies (1): Suppose that $\Gamma \seq \Delta\in T$, with $\Gamma \cup \Delta \neq \emptyset$. By Lemma \ref{claudioternullo}, $\emptyset \seq \tau(\Gamma \seq \Delta) \in T$. Since (3) holds, either there is $\gamma \in \Gamma$ such that $\emptyset \seq \lnot \gamma \in T$, or there is $\delta \in \Delta$ such that $\emptyset \seq \delta \in T$. Applying ($\lnot$L), ($\lnot$R) if necessary, we obtain that $T$ is prime.
\end{proof}

\begin{lemma}\label{lemma: pcp}
  $\Th(X, S_{1} \sqcup S_{2}) = \Th(X, S_{1}) \cap \Th(X, S_{2})$. More explicitly,
  \begin{align*}
      X, S_{1} \sqcup S_{2} \vdash_{\STH} S \iff X, S_{1} \vdash_{\STH} S \text{ and } X, S_{2} \vdash_{\STH} S.
  \end{align*}
\end{lemma}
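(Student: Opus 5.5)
The left-to-right direction is immediate: $S_{1} \sqcup S_{2}$ follows from $S_{1}$, and also from $S_{2}$, by finitely many applications of (WL) and (WR). So any derivation of $S$ from $X, S_{1} \sqcup S_{2}$ can be prefixed by such weakenings to give derivations of $S$ from $X, S_{1}$ and from $X, S_{2}$. The substance is the right-to-left direction. My plan is to prove first a ``context-adding'' lemma and then to apply it twice.

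\emph{Step 1 (context lemma).} I will show that for every $\mathcal{L}$-Henkin sequent $S' = \Pi \seq \Sigma$:
\[
Y \vdash_{\STH} S \quad\text{implies}\quad Y \sqcup S' \vdash_{\STH} S \sqcup S',
\]
and consequently $Y \vdash_{\STH} S \sqcup S'$ whenever $Y \vdash_{\STH} S$, since each member of $Y \sqcup S'$ follows from the corresponding member of $Y$ by weakening. The proof is by induction on the derivation: we add $\Pi$ to the left and $\Sigma$ to the right of every sequent in it.
\begin{itemize}
\item A leaf that is a hypothesis $S'' \in Y$ becomes $S'' \sqcup S' \in Y \sqcup S'$.
\item An instance of (ID) becomes $\varphi, \Pi \seq \varphi, \Sigma$, which is obtained from (ID) by weakenings.
\item For every other rule of $\STH$, whether top-down or bottom-up, and including (UWI), (EWI), (EWE), (UWE) and the quantifier rules of Figure~\ref{fig: rules1}, the rule is schematic in arbitrary side contexts $\Gamma, \Delta$. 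Replacing $\Gamma, \Delta$ by $\Gamma \cup \Pi$, $\Delta \cup \Sigma$ therefore yields another instance of the same rule.
\end{itemize}
The key point is that $\STH$ has no eigenvariable condition. This is exactly why the argument works here but would fail for ($\forall$R) in $\mathcal{ST}^Q$: enlarging the context there might introduce the eigenvariable.

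\emph{Main obstacle.} The only step needing care is that sequents are pairs of \emph{sets}. When adding $\Pi, \Sigma$ I must check that an active formula which happens to lie in $\Pi$ or $\Sigma$ does not break the rule shape. For example, for a top-down rule whose premise contains $\varphi[x \mapsto t]$ with $\varphi[x \mapsto t] \in \Pi$, the conclusion after adding $\Pi$ still contains it. My plan is to read each rule as allowing its principal and side formulas to also occur in the context, which is the standard reading for set-based sequents. Under that reading, the enlarged sequents are genuine instances. If some rule turns out to need an adjustment, I will recover the enlarged instance by interposing weakenings, which suffices because every premise and conclusion may simply be weakened by $\Pi \seq \Sigma$.

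\emph{Step 2 (combination).} Assume $X, S_{1} \vdash_{\STH} S$ and $X, S_{2} \vdash_{\STH} S$.
\begin{itemize}
\item Applying Step~1 to the first derivation with $S' = S_{2}$ gives $X, S_{1} \sqcup S_{2} \vdash_{\STH} S \sqcup S_{2}$. Here the members of $X \sqcup S_{2}$ are recovered from $X$ by weakening.
\item Applying Step~1 to the second derivation with $S' = S$ gives $X, S_{2} \sqcup S \vdash_{\STH} S \sqcup S$, and $S \sqcup S = S$.
\end{itemize}
Since $\sqcup$ is commutative, $S_{2} \sqcup S = S \sqcup S_{2}$. Concatenating the two derivations therefore yields $X, S_{1} \sqcup S_{2} \vdash_{\STH} S$, which proves the equivalence. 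The identity $\Th(X, S_{1} \sqcup S_{2}) = \Th(X, S_{1}) \cap \Th(X, S_{2})$ is just a restatement of it.
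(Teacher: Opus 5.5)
Your proposal is correct and follows essentially the same route as the paper. In both, you add $S_{2}$ (respectively $S$) as extra context to every sequent of the two given derivations, recover the weakened members of $X$ by (WL)/(WR), and concatenate, using $S_{2} \sqcup S = S \sqcup S_{2}$ and $S \sqcup S = S$. Your explicit remarks fill in details the paper leaves implicit: (ID) leaves become weakenings of identity axioms rather than literal (ID) instances, and the absence of eigenvariable conditions in $\STH$ is what makes adding context harmless.
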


\begin{proof}
  The left-to-right implication holds because $S_{i} \vdash S_{1} \sqcup S_{2}$ for $i \in \{ 1, 2 \}$. Conversely, consider derivations $\mathcal{D}_{1}$ and $\mathcal{D}_{2}$ witnessing that $X, S_{1} \vdash_{\STH} S$ and $X, S_{2} \vdash_{\STH} S$. Changing each sequent of $\mathcal{D}_{1}$ from $S'$ to $S' \sqcup S_{2}$ yields a derivation $\mathcal{D}_{1}'$ witnessing that $X \sqcup S_{2}, S_{1} \sqcup S_{2} \vdash_{\STH} S \sqcup S_{2}$. Changing each sequent of $\mathcal{D}_{2}$ from $S'$ to $S \sqcup S'$ yields a derivation $\mathcal{D}_{2}'$ witnessing that $S \sqcup X, S \sqcup S_{2} \vdash_{\STH} S \sqcup S$. Concatenating $\mathcal{D}_{1}'$ and $\mathcal{D}_{2}'$ yields a derivation witnessing that $S \sqcup X, X \sqcup S_{2}, S_{1} \sqcup S_{2} \vdash_{\STH} S \sqcup S$. Since $X \vdash_{\STH} S \sqcup X$ and $X \vdash_{\STH} X \sqcup S_{2}$, while $S \sqcup S$ is $S$, it follows that $X, S_{1} \sqcup S_{2} \vdash_{\STH} S$.
\end{proof}

\begin{lemma}
\label{primlemma}Let $T$ be a theory. The following are equivalent:
\begin{enumerate}
    \item $T$ is prime;
    \item $T$ is a meet-prime element of $\mathfrak{T}$;
    \item $T$ is a meet-irreducible element of $\mathfrak{T}$.
\end{enumerate}
\end{lemma}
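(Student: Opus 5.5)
I would prove the cycle (1) $\Rightarrow$ (2) $\Rightarrow$ (3) $\Rightarrow$ (1). In the lattice $\mathfrak{T}$ the meet is intersection, and the order is inclusion.

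\emph{(1) implies (2).} Suppose $T$ is prime and $T_{1} \cap \dots \cap T_{n} \subseteq T$ for theories $T_{i}$. It suffices to treat $n = 2$ and induct; the case $n=0$ is excluded because the empty meet is the top element, the set of all sequents. That case needs a separate argument, which I address below. Suppose $T_{1} \nsubseteq T$ and $T_{2} \nsubseteq T$, and pick $S_{1} \in T_{1} \setminus T$ and $S_{2} \in T_{2} \setminus T$. By (WL) and (WR), $S_{1} \sqcup S_{2}$ belongs to both $T_{1}$ and $T_{2}$, hence to $T$. By Lemma \ref{Mario}(2), either $S_{1} \in T$ or $S_{2} \in T$, which is a contradiction.

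For $n = 0$ we must show that a prime theory is proper. The empty sequent $\emptyset \seq \emptyset$ is the delicate point, since primeness says nothing when $\Gamma \cup \Delta = \emptyset$. Here I would use the convention that meet-primeness concerns nonempty $X$, or equivalently that the top element is excluded by convention. If instead the paper intends to allow $X = \emptyset$, I would check whether primality forces properness. I expect this is the only subtle step; I would handle it by adopting the same convention on both sides, so that (2) and (3) quantify over the same $X$.

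\emph{(2) implies (3).} This is the general lattice fact, recalled in the paper, that meet-prime elements are meet-irreducible. If $T = \bigcap X$ with $X$ finite, then $\bigcap X \subseteq T$, so $T' \subseteq T$ for some $T' \in X$. Since also $T \subseteq T'$, we get $T = T' \in X$.

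\emph{(3) implies (1).} I would verify condition (2) of Lemma \ref{Mario}. Suppose $S_{1} \sqcup S_{2} \in T$. By Lemma \ref{lemma: pcp} with $X = T$,
\[
T = \Th(T, S_{1} \sqcup S_{2}) = \Th(T, S_{1}) \cap \Th(T, S_{2}).
\]
Meet-irreducibility then gives $T = \Th(T, S_{i})$ for some $i$, so $S_{i} \in T$. Lemma \ref{Mario} then yields that $T$ is prime.

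The main obstacle is this final step, but it reduces entirely to Lemma \ref{lemma: pcp}, which is already available. That lemma is what shows that $\mathfrak{T}$ behaves distributively with respect to the $\sqcup$ operation, even though we never need full distributivity of $\mathfrak{T}$.
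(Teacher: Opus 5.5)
Your argument is correct and is essentially the paper's own proof. It uses weakening plus Lemma~\ref{Mario} for (1)$\Rightarrow$(2), the general lattice fact for (2)$\Rightarrow$(3), and Lemma~\ref{lemma: pcp} with $X = T$ followed by Lemma~\ref{Mario} for (3)$\Rightarrow$(1). Your worry about the empty meet is justified: the theory of all sequents is trivially prime but is neither meet-prime nor meet-irreducible if $X=\emptyset$ is admitted, so primality does not force properness and the lemma needs the nonempty (binary) reading that the paper's proof tacitly uses --- harmlessly, since Theorem~\ref{pappagorgia} only invokes (3)$\Rightarrow$(1).
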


\begin{proof}
(1) implies (2). Suppose $T$ is prime, and suppose ex absurdo that $T_{1}\cap T_{2}\subseteq
T$ but $T_{1}\nsubseteq T$ and $T_{2}\nsubseteq T$. So there exist $S_{1}\in T_{1}-T$ and $S_{2}\in T_{2}-T$, hence $S_{1} \sqcup S_{2} \in T_{1} \cap T_{2} \subseteq T$. But because $T$ is prime, by Lemma \ref{Mario} either $S_{1} \in T$ or $S_{2} \in T$, contradicting $S_{1} \in T_{1} - T$ and $S_{2} \in T_{2} - T$.

(2) implies (3) trivially. (3) implies (1). We use Lemma \ref{Mario} again. Suppose that $T$ is a meet-irreducible theory and consider $S_{1} \sqcup S_{2} \in T$. Then $T = \Th(T, S_{1} \sqcup S_{2}) = \Th(T, S_{1}) \cap \Th(T, S_{2})$ by Lemma~\ref{lemma: pcp}, hence either $T = \Th(T, S_{1})$ and $S_{1} \in T$ or $T = \Th(T, S_{2})$ and $S_{2} \in T$.
\end{proof}

The following is a simplified proof of Corollary 18.23 in \cite{DP}.

\begin{theorem}\label{pappagorgia}
  If $T$ is an $S$-consistent
theory, then there exists a prime and $S$-consistent theory $T^{\prime}$ such
that $T\subseteq T^{\prime}$.
\end{theorem}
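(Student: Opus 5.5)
The plan is to combine the lattice-theoretic facts just recalled: Lemma~\ref{meet irr} in the algebraic lattice $\mathfrak{T}$ of $\mathcal{L}$-Henkin theories, followed by Lemma~\ref{primlemma}. The first provides a completely meet-irreducible theory above $T$ that avoids a given compact element. The second identifies (finitely) meet-irreducible theories with prime ones.

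We apply Lemma~\ref{meet irr} with $a \assign T$ and $c \assign \Th(S)$, the theory generated by the single sequent $S$. As recalled above, the compact elements of $\mathfrak{T}$ are exactly the theories $\Th(X)$ with $X$ finite, so $\Th(S)$ is compact. We also need $\Th(S) \nsubseteq T$. This is immediate, since $S \in \Th(S)$ and $S \notin T$ by $S$-consistency. Lemma~\ref{meet irr} then yields a completely meet-irreducible theory $T'$ with $T \subseteq T'$ and $\Th(S) \nsubseteq T'$. The latter inclusion failure means $S \notin T'$: if $S \in T'$, then $\Th(S) \subseteq T'$, because $T'$ is deductively closed. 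Hence $T'$ is $S$-consistent.

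It remains to see that $T'$ is prime. A completely meet-irreducible element is in particular meet-irreducible, because finite meets are a special case of arbitrary meets. Strictly speaking, one should check the degenerate case of the empty meet: the top element, the set of all sequents, is not $S$-consistent, so $T'$ is not the top and nothing goes wrong there. By Lemma~\ref{primlemma}, (3) implies (1), so $T'$ is prime.

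The only step needing any care is the verification that $\mathfrak{T}$ is algebraic with the stated compact elements, since that is what licenses the use of Lemma~\ref{meet irr}. The paper asserts this, and it follows because derivations are finite: $\mathfrak{T}$ is the lattice of closed sets of a finitary closure operator. So no real obstacle is expected. The substantive work, namely the distributivity-type Lemma~\ref{lemma: pcp}, has already been carried out in proving Lemma~\ref{primlemma}.
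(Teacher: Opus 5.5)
Your proposal is correct and follows the paper's proof essentially verbatim. It applies Lemma~\ref{meet irr} in $\mathfrak{T}$ to $T$ and the compact element $\Th(S)$, then uses Lemma~\ref{primlemma} to pass from (completely) meet-irreducible to prime; your extra checks ($\Th(S)\nsubseteq T'$ gives $S\notin T'$, and complete meet-irreducibility implies finite meet-irreducibility) only make explicit what the paper leaves implicit.
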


\begin{proof}
We apply Lemma \ref{meet irr} to the lattice $\mathfrak{T}$ of all $\mathcal{L}$-Henkin theories. Let $T$ be an
$S$-consistent theory. Then $\Th\left(  S\right)  $ is a compact element of
$\mathfrak{T}$ such that $\Th\left(  S\right)  \nsubseteq T$. By Lemma
\ref{meet irr}, there exists a meet-irreducible and $S$-consistent $T^{\prime
}$ such that $T\subseteq T^{\prime}$. By Lemma \ref{primlemma},
$T^{\prime}$ is a prime theory.
\end{proof}

  The last ingredient of our completeness results is a technical lemma giving us the liberty to rename variables whenever it is convenient to do so throughout the proof.

\begin{lemma} \label{lemma: alpha equivalence}
  Suppose that $S'$ is obtained from $S$ by renaming some bound variables in some formulas. Then $S$ and $S'$ are interderivable in $\STH$.
\end{lemma}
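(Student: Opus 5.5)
The plan is to reduce the statement to a single renaming step and then argue by induction on formula complexity. Renaming is symmetric, so $S'$ is again obtained from $S$ by renaming bound variables back. Each renaming is also a finite composition of elementary steps, in each of which one quantifier occurrence $Qx\,\psi$ inside one formula occurrence of the sequent becomes $Qy\,\psi[x\mapsto y]$ with $y$ fresh. So it suffices to show the following: if $\varphi'$ arises from $\varphi$ by such a renaming, then $\varphi,\Gamma\seq\Delta \vdash_{\STH} \varphi',\Gamma\seq\Delta$ and $\Gamma\seq\Delta,\varphi \vdash_{\STH} \Gamma\seq\Delta,\varphi'$. Since $\STH$ has no Cut, I cannot simply derive $\varphi\seq\varphi'$ and compose. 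Instead I will exploit that every logical rule of $\STH$ is bidirectional: decompose the formula bottom-up, apply the induction hypothesis to a component, and recompose top-down.

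I prove the claim for all $\Gamma,\Delta$ simultaneously, by induction on the number of logical symbols of $\varphi$. Substituting a closed term (in particular a Henkin constant) for a variable does not change this number, so the induction applies to instances $\psi[x\mapsto t]$. If the renamed quantifier is not the outermost symbol of $\varphi$, the connective cases are routine. For example, from $\varphi_1\wedge\varphi_2,\Gamma\seq\Delta$ I apply ($\wedge$L)$\uparrow$, use the induction hypothesis on the affected conjunct, and apply ($\wedge$L)$\downarrow$. For a quantifier $\forall z\,\chi$ whose body contains the renamed occurrence, I decompose via ($\forall$LW)$\uparrow$ or ($\forall$RW)$\uparrow$, obtaining the instance at $\wit(\forall z\,\chi)$. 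The problem is then to reach the instance at the new constant $\wit(\forall z\,\chi')$, which is handled exactly as in the main case below.

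The main case, and the expected obstacle, is the outermost quantifier itself. Because Henkin constants are indexed by formulas, $\wit(\forall x\,\psi)$ and $\wit(\forall y\,\psi')$ (with $\psi'=\psi[x\mapsto y]$) are \emph{different} constants. The witness rules must therefore be used to trade one constant for the other, and only one direction of trade is available on each side of the sequent. Write $c=\wit(\forall x\,\psi)$ and $c'=\wit(\forall y\,\psi')$.

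On the left, ($\forall$LW)$\uparrow$ yields $\psi[x\mapsto c],\Gamma\seq\Delta$. The formula $\psi[x\mapsto c]$ coincides with $\psi'[y\mapsto c]$ (up to the inner renamings, covered by the induction hypothesis). Then (UWI) with $t=c$ gives $\psi'[y\mapsto c'],\Gamma\seq\Delta$, and ($\forall$LW)$\downarrow$ gives $\forall y\,\psi',\Gamma\seq\Delta$.

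On the right, UWI is unavailable, so I first apply ($\forall$RW)$\uparrow$ to get $\Gamma\seq\Delta,\psi[x\mapsto c]$. Then (UWE) with $t=c'$ gives $\Gamma\seq\Delta,\psi[x\mapsto c']$. The induction hypothesis, if needed, turns this into $\psi'[y\mapsto c']$, and ($\forall$RW)$\downarrow$ finishes.

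The existential cases are dual. On the right I use (EWI) after instantiating at $\wit(\exists x\,\psi)$; on the left I use (EWE) with $t=\wit(\exists y\,\psi')$ before applying the induction hypothesis.

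The delicate point is the order of these operations. On each side, the constant-changing rule must be applied where it points in the right direction: before the induction hypothesis on the right for $\forall$ and on the left for $\exists$, after it otherwise. Throughout, I take care that substitutions of Henkin constants cause no variable capture, which holds since the constants are closed.
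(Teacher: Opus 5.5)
Your proof is correct and follows essentially the same route as the paper's. Both argue by induction on the complexity of the renamed formula: apply the bidirectional quantifier rule bottom-up to reach the Henkin instance, use a witness rule to trade one $\wit$-constant for the other, and recompose top-down. The paper displays only the right-hand existential case, via ($\exists$RW$\uparrow$), (EWI), ($\exists$RW$\downarrow$), and says the rest is similar. Your version is more explicit on two points the paper leaves under ``similarly'': which witness rule (introduction or elimination) is available on each side, and the case of a renaming inside the scope of an outer quantifier, where $\wit(\forall z\,\chi)$ must also be exchanged for $\wit(\forall z\,\chi')$.
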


\begin{proof}
Induction on the complexity of the formula $\varphi$ where the bound variables are renamed. We exemplify the inductive step supposing that $\varphi$ has the form $\exists x\, \psi(x)$, and that $y$ is a variable not occurring anywhere in $ S := \Gamma \seq \Delta, \exists x\, \psi(x)$:
\begin{prooftree}
\AxiomC{$\Gamma \seq \Delta, \exists x\, \psi(x)$}
\UnaryInfC{$\Gamma \seq \Delta, \psi[x \mapsto y \mapsto \wit (\exists x\, \psi (x))]$}
\UnaryInfC{$\Gamma \seq \Delta, \psi[x \mapsto y \mapsto \wit (\exists y\, \psi [x \mapsto y])]$}
\UnaryInfC{$\Gamma \seq \Delta, \exists y\, \psi[x \mapsto y]$}
\end{prooftree}
The previous inferential steps are justified as follows. The uppermost one is an application of ($\exists$RW$\uparrow$), which yields $\psi[x \mapsto y \mapsto \wit (\exists x\, \psi (x))]$ as a principal formula because $y$ does not occur in $\psi$. Next, we have an application of (EWI). Finally, we apply ($\exists$RW$\downarrow$).

If the existentially quantified formula appears on the left-hand side, we proceed similarly.
\end{proof}

\begin{theorem} \label{thm: completeness}
  Let $X \cup \{ S \}$ be a set of $\mathcal{L}$-Henkin sequents. If $X \models_{\STH} S$, then $X \vdash_{\STH} S$.
\end{theorem}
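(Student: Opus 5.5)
We argue by contraposition. Suppose $X \nvdash_{\STH} S$. Then $\Th(X)$ is an $S$-consistent theory, and Theorem~\ref{pappagorgia} extends it to a prime, $S$-consistent theory $T'$. The empty sequent is not in $T'$: if it were, weakening would put $S$ in $T'$. By the remark after Definition~\ref{cataratta}, $T'$ is complete. The plan is to build a canonical $\mathrm{ST}$-model $\mathsf{M}$ from $T'$, show that it is an $\mathcal{L}$-Henkin model, and show that it satisfies exactly the sequents in $T'$. Then $\mathsf{M}$ satisfies every member of $X$ but not $S$, which gives $X \not\models_{\mathrm{ST}^H} S$.

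\emph{Construction.} Let $D$ be the set of all $\mathcal{L}$-Henkin terms. Interpret each variable as itself and each function symbol syntactically, so that $I(t)=t$. For a formula $\varphi$ define
\[
I(\varphi) = 1 \text{ if only } \emptyset \seq \varphi \in T', \quad I(\varphi)=0 \text{ if only } \varphi \seq \emptyset \in T', \quad I(\varphi)=\tfrac12 \text{ if both are in } T'.
\]
By completeness of $T'$ at least one of the two is always in $T'$, so $I$ is well defined. On atomic formulas we set $I(P)(t_1,\dots,t_n) := I(P(t_1,\dots,t_n))$, and the first task is to check that $I$ obeys the strong Kleene clauses.

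\emph{Verifying the clauses.} It suffices to verify separately the two conditions ``$\emptyset \seq \varphi \in T'$'' and ``$\varphi \seq \emptyset \in T'$''.
\begin{itemize}
\item Negation follows from the invertibility of ($\lnot$L) and ($\lnot$R).
\item For $\land$: $\emptyset \seq \varphi\land\psi \in T'$ iff both $\emptyset\seq\varphi$ and $\emptyset\seq\psi$ are in $T'$, by ($\land$R) in both directions. Also $\varphi\land\psi \seq \emptyset \in T'$ iff $\varphi,\psi \seq \emptyset \in T'$ iff one of $\varphi\seq\emptyset$, $\psi\seq\emptyset$ is in $T'$, by primeness (Lemma~\ref{Mario}) and weakening. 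This yields the $\min$ clause; $\lor$ is dual.
\item For $\forall x\,\varphi$, write $w=\wit(\forall x\,\varphi)$. On the right, $\emptyset\seq\forall x\,\varphi\in T'$ iff $\emptyset\seq\varphi[x\mapsto w]\in T'$ by ($\forall$RW). This in turn holds iff $\emptyset\seq\varphi[x\mapsto t]\in T'$ for every term $t$: use (UWE) for one direction and take $t=w$ for the other.
\item On the left, $\forall x\,\varphi\seq\emptyset\in T'$ iff $\varphi[x\mapsto w]\seq\emptyset\in T'$ by ($\forall$LW). This holds iff $\varphi[x\mapsto t]\seq\emptyset\in T'$ for some $t$: use (UWI) for one direction and $t=w$ for the other.
\end{itemize}
Combining the two halves, $I(\forall x\,\varphi)=\min_t I(\varphi[x\mapsto t])$, and the minimum is attained at $t=w$. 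The existential case is dual, using ($\exists$LW), ($\exists$RW), (EWI) and (EWE).

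Attainment at the witness gives precisely the Henkin conditions $I(\varphi[x\mapsto\wit(\forall x\,\varphi)])=I(\forall x\,\varphi)$ and $I(\varphi[x\mapsto\wit(\exists x\,\varphi)])=I(\exists x\,\varphi)$. Since $\mathsf{M}$ is a model for $\Hen\mathcal{L}$, it is then the Henkin expansion of its own $\mathcal{L}$-reduct, so it is an $\mathcal{L}$-Henkin model.

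\emph{Satisfaction.} $\mathsf{M}\models_{\mathrm{ST}}\Gamma\seq\Delta$ iff some $\gamma\in\Gamma$ has $\gamma\seq\emptyset\in T'$ or some $\delta\in\Delta$ has $\emptyset\seq\delta\in T'$. By primeness in one direction and weakening in the other, this is equivalent to $\Gamma\seq\Delta\in T'$, including the case of the empty sequent. Hence $\mathsf{M}$ satisfies every member of $X$ but not $S$.

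\emph{Main obstacle.} The delicate step is matching the semantic quantifier clause, which ranges over $x$-variants $I'$ of $I$, with the syntactic substitution instances $\varphi[x\mapsto t]$. I would prove a substitution lemma by induction on formulas: if $I'$ is the $x$-variant with $I'(x)=t$, then $I'(\varphi)=I(\varphi[x\mapsto t])$. When $t$ is not free for $x$ in $\varphi$, first rename bound variables, which Lemma~\ref{lemma: alpha equivalence} shows does not change membership in $T'$. Because $I$ on formulas is defined syntactically from $T'$ rather than compositionally from the interpretation of terms, the induction also has to keep track of variables reassigned by variants; I expect this bookkeeping, rather than anything deductive, to require the most care.
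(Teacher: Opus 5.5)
Your proposal is correct and follows essentially the paper's route: extend $\Th(X)$ to a prime $S$-consistent theory by Theorem~\ref{pappagorgia}, build the canonical model on $\mathcal{L}$-Henkin terms, and prove the truth lemma by induction, with the quantifier step driven by the bidirectional rules ($\forall$RW) and ($\forall$LW), the witness rules (UWE) and (UWI), and Lemma~\ref{lemma: alpha equivalence} for renaming. The rest are refinements of presentation rather than a different argument: splitting the three-valued condition into two one-sided conditions, explicitly checking the Henkin conditions and the empty sequent, and appealing explicitly to a substitution lemma that the paper uses tacitly (the standard semantic one for the compositional valuation suffices, so $T'$ need not enter it).
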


\begin{proof}
We proceed contrapositively. Suppose that $X \nvdash_{\mathrm{ST}^H} S$. Then $\Th(X)$ is $S$-consistent, and by Theorem \ref{pappagorgia} there exists a prime and $S$-consistent theory $T$ such that $\Th(X)\subseteq T$. We want to construct a canonical Henkin model $\mathsf{M}= \langle D, I\rangle$ such that $\mathsf{M}\models_{\mathrm{ST}} S'$ for all $S' \in T$, but it is not the case that $\mathsf{M}\models_{\mathrm{ST}} S$. Let thus:
\begin{itemize}
    \item $D$ be the set of all $\mathcal{L}$-Henkin terms;
    \item for any variable $x$, $I(x) = x$;
    \item for any $n$-ary function symbol $f^n$, $I(f^n)$ is the map
    \begin{align*}
    I(f^{n})\colon \langle t_{1}, \dots, t_{n} \rangle \mapsto f^n (t_{1}, \dots, t_{n});
    \end{align*} 
    \item for any $n$-ary relation symbol $P^n$,
    \[
    I(P^n)(t_{1}, \dots, t_{n}) =\begin{cases}
    1  & \text{ if } \emptyset \seq P^n(t_1,...,t_n) \in T, P^n(t_1,...,t_n) \seq \emptyset \notin T; \\
    \frac{1}{2} & \text{ if } \emptyset \seq P^n(t_1,...,t_n) \in T, P^n(t_1,...,t_n) \seq \emptyset \in T; \\
    0  & \text{ if } \emptyset \seq P^n(t_1,...,t_n) \notin T, P^n(t_1,...,t_n) \seq \emptyset \in T.
    \end{cases}
    \]
\end{itemize}

$I(P^n)$ is well-defined because $T$ is prime. Also, observe that I($\wit (\forall x\, \varphi)) = \wit (\forall x\, \varphi)$ and $I(\wit (\exists x\, \varphi)) = \wit (\exists x\, \varphi)$. More generally, $I(t) = t$ for any $\Hen \mathcal{L}$-term $t$. Hence, for atomic formulas,
\[
    I(P^n(t_{1}, \dots, t_{n})) =\begin{cases}
    1  & \text{ if } \emptyset \seq P^n(t_1,...,t_n) \in T, P^n(t_1,...,t_n) \seq \emptyset \notin T; \\
    \frac{1}{2} & \text{ if } \emptyset \seq P^n(t_1,...,t_n) \in T, P^n(t_1,...,t_n) \seq \emptyset \in T; \\
    0  & \text{ if } \emptyset \seq P^n(t_1,...,t_n) \notin T, P^n(t_1,...,t_n) \seq \emptyset \in T.
    \end{cases}
    \]

Now we prove, by induction on the complexity of $\varphi$, that the same condition holds for any formula $\varphi$:
\[
    I(\varphi) =\begin{cases}
    1  & \text{ if } \emptyset \seq \varphi \in T, \varphi \seq \emptyset \notin T; \\
    \frac{1}{2} & \text{ if } \emptyset \seq \varphi \in T, \varphi \seq \emptyset \in T; \\
    0  & \text{ if } \emptyset \seq \varphi \notin T, \varphi \seq \emptyset \in T.
    \end{cases}
\]
 The cases of propositional connectives are handled as in \cite[Thm. 18.24]{DP}. Let us consider the universal quantifier. (The case of the existential quantifier is treated similarly.)
 
 From left to right, suppose that $I(\forall x\,\varphi (x)) = 1$, whence for any $x$-variant $I'$ of $I$, $I'(\varphi (x)) = 1$. This holds in particular when $I'(x) = \wit (\forall x\,\varphi)$. By the inductive hypothesis, $\emptyset \seq \varphi[x \mapsto \wit (\forall x\,\varphi)] \in T$ and $\varphi[x \mapsto \wit (\forall x\,\varphi)] \seq \emptyset \notin T$. By ($\forall$RW$\downarrow$), $\emptyset \seq \forall x\,\varphi (x) \in T$. Suppose ex absurdo that $ \forall x\,\varphi (x) \seq \emptyset \in T$. Then by ($\forall$LW$\uparrow$), $\varphi[x \mapsto \wit (\forall x\,\varphi)] \seq \emptyset \in T$, a contradiction.
 
 Let $I(\forall x\,\varphi (x)) = 0$, whence for some $x$-variant $I'$ of $I$, $I'(\varphi (x)) = 0$. Let $t = I'(x)$. Because $t$ might not be substitutable for $x$ in $\varphi$, let $\psi$ be a formula obtained from $\varphi$ by renaming all bound variables to variables which occur in neither $\varphi$ nor $t$. Then $I'(\psi (x)) = I'(\varphi(x)) = 0$. Because $\psi$ has the same complexity as $\varphi$, by the inductive hypothesis $\emptyset \seq \psi[x \mapsto t] \notin T$ and $\psi[x \mapsto t] \seq \emptyset \in T$. By (UWI), $\psi[x \mapsto \wit (\forall x\,\psi)] \seq \emptyset \in T$, whence by ($\forall$LW$\downarrow$), $ \forall x\,\psi (x) \seq \emptyset \in T$. By Lemma~\ref{lemma: alpha equivalence} also $\forall x\,\varphi(x) \seq \emptyset \in T$. Suppose ex absurdo that $ \emptyset \seq \forall x\,\varphi (x) \in T$. Then $\emptyset \seq \forall x\,\psi(x) \in T$, so by ($\forall$RW$\uparrow$) $\emptyset \seq \varphi[x \mapsto \wit (\forall x\,\varphi)] \in T$ and by (UWE), $\emptyset \seq \varphi[x \mapsto t] \in T$, a contradiction.
 
 Let $I(\forall x\,\varphi (x)) = \frac{1}{2}$, whence for no $x$-variant $I'$ of $I$, $I'(\varphi (x)) = 0$, and for some $x$-variant $I''$ of $I$, $I''(\varphi (x)) = \frac{1}{2}$. Let $t = I''(x)$. Again, let $\psi$ be a formula obtained from $\varphi$ by renaming all bound variables to variables which occur in neither $\varphi$ nor $t$. By inductive hypothesis, $\emptyset \seq \psi[x \mapsto t] \in T$ and $\psi[x \mapsto t] \seq \emptyset \in T$. Reasoning as above, $ \forall x\,\psi (x) \seq \emptyset \in T$ and $\forall x\,\varphi(x) \seq \emptyset \in T$. Were it the case that $ \emptyset \seq \forall x\,\varphi (x) \notin T$, then also $\emptyset \seq \forall x\,\psi(x) \notin T$, so by ($\forall$RW$\downarrow$) we would have that $\emptyset \seq \psi[x \mapsto \wit (\forall x\,\psi)] \notin T$, and for $I'(x) = \wit (\forall x\,\psi)$ we would have $I'(\psi (x)) = 0$, a contradiction.
 
 From right to left, suppose first that $ \emptyset \seq \forall x\,\varphi (x) \in T$ and that $  \forall x\,\varphi (x) \seq \emptyset \notin T$. By ($\forall$RW$\uparrow$), $\emptyset \seq \varphi[x \mapsto \wit (\forall x\,\varphi)] \in T$, whence by (UWE), for any $t$ we have that $\emptyset \seq \varphi[x \mapsto t] \in T$. By ($\forall$LW$\downarrow$), $\varphi[x \mapsto \wit (\forall x\,\varphi)] \seq \emptyset \notin T$, and by (UWI), for any $t$ we have $\varphi[x \mapsto t] \seq \emptyset \notin T$. By induction this means that for any $x$-variants $I'$ of $I$ we have that $I'(\varphi (x) ) = 1$, which means $I(\forall x\,\varphi (x)) = 1$.
 
 If $ \emptyset \seq \forall x\,\varphi (x) \notin T$ and $  \forall x\,\varphi (x) \seq \emptyset \in T$, by ($\forall$RW$\downarrow$) $\emptyset \seq \varphi[x \mapsto \wit (\forall x\,\varphi)] \notin T$. So, if $I'$ is such that $I'(x) = \wit (\forall x\,\varphi)$, by induction there exists an $x$-variant of $I$ s.t. $I'(\varphi (x) ) = 0$, i.e. $I(\forall x\,\varphi (x)) = 0$.
 
 Finally, if $ \emptyset \seq \forall x\,\varphi (x) \in T$ and $  \forall x\,\varphi (x) \seq \emptyset \in T$, by ($\forall$RW$\uparrow$) $\emptyset \seq \varphi[x \mapsto \wit (\forall x\,\varphi)] \in T$, and by (UWE), $\emptyset \seq \varphi[x \mapsto t] \in T$. By induction, for all $x$-variants $I'$ of $I$ we have that $I'(\varphi (x) ) \geq \frac{1}{2}$. However, since $  \forall x\,\varphi (x) \seq \emptyset \in T$, by ($\forall$LW$\uparrow$) $\varphi[x \mapsto \wit (\forall x\,\varphi)] \seq \emptyset \in T$, hence there is an $x$-variant $I''$ of $I$ such that $I''(\varphi (x) ) = \frac{1}{2}$. So $I(\forall x\, \varphi (x)) = \frac{1}{2}$.
 
 Having established our claim, it readily follows that our Henkin model $\mathsf{M}$ does not $\mathrm{ST}$-satisfy $S = \Gamma \seq \Delta$: if it did, there would be either $\gamma \in \Gamma $ such that $\gamma \seq \emptyset \in T$ or $\delta \in \Delta $ such that $\emptyset \seq \Delta \in T$; in both cases, $S$ would belong to $T$, a contradiction. Since $T$ is a prime theory containing $X$, $\mathsf{M}$ $\mathrm{ST}$-satisfies all members of $X$, and our theorem is proved.  
\end{proof}

\subsection{Relationship with the Epsilon calculus}\label{immapinto}

In this subsection we aim at making explicit the relationship between $\STH$ and the \emph{Epsilon calculus}, already mentioned in our introduction. More precisely, we will present a sequent version $\mathcal{E}$ of the Epsilon calculus and provide two mutually inverse translations from $\STH$, extended with two extra rules (including a Cut rule), to $\mathcal{E}$, and vice versa.

Let $\mathcal{L}$ be a non-empty signature for first-order classical logic, consisting of relation symbols and function symbols of finite (possibly zero) arity, and expand its attendant alphabet by the addition of a new logical symbol $\epsilon$. Terms and formulas are defined by mutual recursion, by adding to the standard formation clauses the following one: if $\varphi$ is a formula where $x$ has a free occurrence, then $\epsilon_{x}\varphi$ is a term whose free variables are the free variables in $\varphi$ minus $x$. Informally, $\epsilon_{x}\varphi$ denotes an $x$ that has the property $\varphi$ if any such object exists, and an arbitrary object otherwise. With a slight notational abuse, we refer to $\mathcal{L}$-terms and $\mathcal{L}$-formulas obtained by means of this expanded logical vocabulary as $\mathcal{L}^\mathcal{E}$-terms and $\mathcal{L}^\mathcal{E}$-formulas, respectively.

The sequent calculus $\mathcal{E}$ is a two-sided version of the one-sided calculus due to Leisenring \cite{Leisenring}. Its sequents have the form $\Gamma \seq \Delta$, where $\Gamma,\Delta$ are sets of $\mathcal{L}^\mathcal{E}$-formulas. Its structural and propositional logical introduction rules are the same as in $\mathcal{ST}^P$, with the addition of the following Cut rule:
\begin{prooftree}
\def\fCenter{\seq}
\Axiom$\Gamma \fCenter \Delta, \varphi$
\Axiom$\varphi, \Pi \fCenter \Sigma$\RightLabel{\scriptsize (CUT)}
\BinaryInf$\Gamma,\Pi \fCenter \Delta,\Sigma$
\end{prooftree}
Its rules involving quantifiers are listed in Figure~\ref{fig: rules-eps}. Crucially, Leisenring's calculus is not cut-free complete with respect to the Epsilon calculus, in its traditional presentation as a Hilbert-style calculus. Therefore, no implication as to the direct relationship between $\STH$ and the Epsilon calculus immediately follows from the results below.

\begin{figure}
\caption{Quantifier rules in $\mathcal{E}$}
\label{fig: rules-eps}

\medskip

\begin{prooftree}
\def\fCenter{\seq}
\Axiom$\varphi[x \mapsto \epsilon_{x}\varphi],  \Gamma \fCenter \Delta$\RightLabel{\scriptsize ($\exists$L$\epsilon$)}
\UnaryInf$\exists x\,\varphi,\Gamma \fCenter \Delta$

\def\fCenter{\seq}
\Axiom$\Gamma \fCenter \Delta, \varphi[x \mapsto  t]$\RightLabel{\scriptsize ($\exists$R$\epsilon$)}
\UnaryInf$\Gamma \fCenter \Delta, \exists x\,\varphi$

\noLine
\BinaryInfC{}
\end{prooftree}

\medskip

\begin{prooftree}

\def\fCenter{\seq}
\Axiom$\varphi[x \mapsto t], \Gamma \fCenter \Delta$\RightLabel{\scriptsize ($\forall$L$\epsilon$)}
\UnaryInf$\forall x\,\varphi, \Gamma \fCenter \Delta$

\def\fCenter{\seq}
\Axiom$\Gamma \fCenter \Delta, \varphi[x \mapsto \epsilon_{x}\lnot \varphi]$\RightLabel{\scriptsize ($\forall$R$\epsilon$)}
\UnaryInf$\Gamma \fCenter \Delta, \forall x \varphi$

\noLine
\BinaryInfC{}

\end{prooftree}

\medskip

\end{figure}

We now define by simultaneous induction a translation of $\mathcal{L}$-Henkin terms to $\mathcal{L}^{\mathcal{E}}$-terms, and of $\mathcal{L}$-Henkin formulas to $\mathcal{L}^{\mathcal{E}}$-formulas.

\begin{definition}\label{scimproriu}
    If $t$ is an $\mathcal{L}$-Henkin term, then $t^E$ is the $\mathcal{L}^{\mathcal{E}}$-term defined as follows:
    \begin{enumerate}
        \item $t^E := t$ if $t$ is a variable or a constant in $\mathcal{L}$;
        \item $t^E := \epsilon_{x} \varphi^E$ if $t$ is $\wit (\exists x\, \varphi)$;
        \item $t^E := \epsilon_{x} \lnot \varphi^E$ if $t$ is $\wit (\forall x\, \varphi)$;
        \item $t^E := f^{n}(t_1^{E},...,t_n^{E})$ if $t$ is $f^{n}(t_1,...,t_n)$.
    \end{enumerate}
    If $\varphi$ is a $\mathcal{L}$-Henkin formula, then $\varphi^E$ is the $\mathcal{L}^{\mathcal{E}}$-formula defined as follows:
    \begin{enumerate}
        \item $\varphi^E := P^n(t_1^{E},...,t_n^{E})$ if $\varphi$ is $P^n(t_1,...,t_n)$;
        \item $\varphi^E := \lnot (\psi^E)$ if $\varphi$ is $\lnot \psi$;
        \item $\varphi^E := \psi^E \circ \chi^E$ if $\varphi$ is $\psi \circ \chi$ ($\circ \in \{ \land, \lor \}$);
        \item $\varphi^E := Qx(\psi^E)$ if $\varphi$ is $Qx\psi$ ($Q \in \{ \exists, \forall \}$).
    \end{enumerate}
\end{definition}

Henceforth, for $\Gamma$ a set of $\mathcal{L}$-Henkin formulas, we use the abbreviation $\Gamma^E$ to denote $\{ \varphi^E \mid \varphi \in \Gamma \}$ and for $X$ a set of $\mathcal{L}$-Henkin sequents, we use $X^E$ to denote $ \{ \Gamma \seq \Delta \mid \Gamma^E \seq \Delta^E \in X \}$.

\begin{lemma}\label{careganzu}
 For any $\mathcal{L}$-Henkin formula $\varphi(x_1,...,x_n)$, we have that 
\[
(\varphi[x_1 \mapsto t_1,...,x_n \mapsto t_n])^E = \varphi^E[x_1 \mapsto t_1^E,...,x_n \mapsto t_n^E].
\]
 
\end{lemma}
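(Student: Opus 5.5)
The plan is a simultaneous induction that mirrors Definition~\ref{scimproriu}: first a term version, then the formula version. The term claim is
\[
(s[x_1 \mapsto t_1,\dots,x_n \mapsto t_n])^E = s^E[x_1 \mapsto t_1^E,\dots,x_n \mapsto t_n^E]
\]
for every $\mathcal{L}$-Henkin term $s$. Throughout we assume, as the paper does for substitution, that each $t_i$ is substitutable for $x_i$, and we write $\sigma$ for the substitution $x_i \mapsto t_i$ and $\sigma^E$ for $x_i \mapsto t_i^E$.

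\textbf{Terms.} We argue by induction on the construction of $s$.
\begin{itemize}
\item If $s$ is a variable $x_i$, both sides equal $t_i^E$.
\item If $s$ is any other variable or an $\mathcal{L}$-constant, both sides equal $s$.
\item If $s = f^n(s_1,\dots,s_m)$, the claim follows from the inductive hypothesis and clause (4) for terms.
\item If $s$ is a Henkin constant $\wit(\exists x\,\varphi)$ or $\wit(\forall x\,\varphi)$, then $s$ is a constant of $\Hen\mathcal{L}$, so $s\sigma = s$ and the left side is $s^E$. The right side is $(\epsilon_x\varphi^E)\sigma^E$, respectively $(\epsilon_x\lnot\varphi^E)\sigma^E$.
\end{itemize}

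The Henkin-constant case is the main obstacle. The footnote allows $\varphi$ to contain free variables other than $x$. In that case $\epsilon_x\varphi^E$ is not closed, and $\sigma^E$ could act on it while $\sigma$ leaves the constant $s$ fixed. I would handle this as follows. Henkin constants are atomic symbols whose "internal" variables are not free variables of the term $s$. Hence the relevant free-variable bookkeeping is $\mathrm{FV}(s)$, which is empty for a constant, and the substitution on the right is taken to act only on $\mathrm{FV}(s)$. Concretely, we read the statement with $x_1,\dots,x_n$ ranging over the free variables of $\varphi(x_1,\dots,x_n)$ in the Henkin sense, i.e.\ variables not occurring inside Henkin constants. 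We then check that under the translation the variables free in $\varphi^E$ but coming from inside a Henkin constant are left untouched. In the contexts where the lemma is applied (the rules of $\STH$), the substituted variable is the bound $x$ of a quantifier, and it does not occur inside the Henkin constants of its scope except as bound there. If a fully literal reading is needed, I would instead restrict the statement to the case where the $x_i$ do not occur free inside any Henkin constant in $\varphi$. That restriction makes the constant case trivial on both sides.

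\textbf{Formulas.} We argue by induction on $\varphi$. Atomic formulas reduce to the term claim via clause (1). Negation, $\land$ and $\lor$ commute with both the translation and substitution. For $\varphi = Qy\,\psi$, if $y$ is some $x_i$, drop that variable from the substitution, since it is not free, and apply the induction hypothesis to the remaining ones. Otherwise, substitutability guarantees that $y$ does not occur in the $t_i$, and hence not in the $t_i^E$, because translation introduces no new free variables. Then
\[
(Qy\,\psi\sigma)^E = Qy\,(\psi\sigma)^E = Qy\,\psi^E\sigma^E = (Qy\,\psi)^E\sigma^E.
\]
A small side lemma, proved by the same induction, is used here: $\mathrm{FV}(t^E)\subseteq \mathrm{FV}(t)$ and $\mathrm{FV}(\varphi^E)\subseteq\mathrm{FV}(\varphi)$, in the Henkin sense. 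This ensures that substitutability is preserved by the translation, so no capture arises on the $\epsilon$ side.
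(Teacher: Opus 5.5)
Your simultaneous induction is what the paper's one-line proof (``by induction on the construction of $\varphi$'') amounts to. The variable, function-symbol, atomic and propositional cases are fine. But the Henkin-constant case cannot be removed by a reading convention: as literally stated, the lemma is false there, and the paper's proof is silent about it.

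Take a binary $P$ and $c = \wit(\forall y\,P(y,x))$, which the footnote explicitly allows. Since $c$ is a constant of $\Hen\mathcal{L}$, $(P(x,c)[x \mapsto z])^E = P(z,\epsilon_y\lnot P(y,x))$. On the other hand, $(P(x,c))^E[x \mapsto z] = P(z,\epsilon_y\lnot P(y,z))$, because the paper stipulates that the free variables of $\epsilon_y\psi$ are those of $\psi$ other than $y$.

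Your reading (a) needs a substitution on $\mathcal{L}^{\mathcal{E}}$-formulas that skips occurrences inside $\epsilon$-terms. That is not the substitution in the statement, and it makes the lemma true by fiat. It would also make the lemma useless in Theorem~\ref{faccetontu}, where $\varphi^E[x \mapsto t^E]$ must be a genuine instance for ($\exists$R$\epsilon$) to apply. Your restriction (b) gives a different lemma, and your claim that it covers the applications is false. The formula $\varphi = P(x,c)$ is a legitimate premise formula for (EWI), since $\exists x\,\varphi$ is an $\mathcal{L}_1$-formula. Yet $x$ occurs free in the formula defining $c$, so the step ``by Lemma~\ref{careganzu}, $(\varphi[x \mapsto t])^E = \varphi^E[x \mapsto t^E]$'' in that proof uses exactly the failing instance.

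Your side lemma hides a second failure. The inclusion $\mathrm{FV}(t^E) \subseteq \mathrm{FV}(t)$ is false for the free variables of $\mathcal{L}^{\mathcal{E}}$: the closed term $t = \wit(\forall u\,P(u,y))$ translates to $\epsilon_u\lnot P(u,y)$, which has $y$ free. So the quantifier step breaks even under restriction (b). Take $\varphi = \exists y\,P(x,y)$, which contains no Henkin constant. Then $t$ is substitutable for $x$, and $(\varphi[x \mapsto t])^E = \exists y\,P(\epsilon_u\lnot P(u,y),y)$, where the $y$ inside the $\epsilon$-term is now captured. Meanwhile $t^E$ is not even substitutable for $x$ in $\varphi^E$.

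A true version needs two hypotheses:
(i) no $x_i$ is free in $c^E$ for any Henkin constant $c$ occurring in $\varphi$ outside the scope of a quantifier on $x_i$;
(ii) each $t_i^E$ is substitutable for $x_i$ in $\varphi^E$.
Under these, your induction goes through, with (ii) replacing the false side lemma. Both hold automatically if Henkin constants are formed only from formulas with no free variable besides the quantified one. That is not the paper's setting, since the footnote (and the completeness proof) need Henkin constants with parameters.

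As it stands, your proposal proves neither the stated lemma nor a version strong enough for its use in Theorem~\ref{faccetontu}.
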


\begin{proof}
    By induction on the construction of $\varphi$.
\end{proof}

Let $\mathcal{ST}^{HC}$ be the calculus obtained from $\STH$ by:
\begin{itemize}
    \item removing all the elimination rules (including (UWE) and (EWE));
    \item adding (CUT);
    \item adding the following bidirectional rules:
        \begin{prooftree}
        \def\fCenter{\seq}
        \Axiom$\varphi[x \mapsto \wit (\forall x\, \psi)], \Gamma \fCenter \Delta $\RightLabel{\scriptsize (WEXCHL)}
        \doubleLine
        \UnaryInf$\varphi[x \mapsto \wit (\exists x\, \lnot \psi)], \Gamma \fCenter \Delta$
    \end{prooftree}
    \begin{prooftree}
        \def\fCenter{\seq}
        \Axiom$\Gamma \fCenter \Delta, \varphi[x \mapsto \wit (\forall x\, \psi)]$\RightLabel{\scriptsize (WEXCHR)}
        \doubleLine
        \UnaryInf$\Gamma \fCenter \Delta, \varphi[x \mapsto \wit (\exists x\, \lnot \psi)]$
        \end{prooftree}
\end{itemize}

\begin{lemma}\label{bessiminci}
 The elimination rules of $\STH$ are derivable in $\mathcal{ST}^{HC}$.
\end{lemma}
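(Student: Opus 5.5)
The plan is to show that every elimination rule of $\STH$, i.e.\ every bottom-up direction of a bidirectional rule together with (UWE) and (EWE), is derivable in $\mathcal{ST}^{HC}$ by a single application of (CUT). For each such rule I will produce a cut-free derivable ``auxiliary'' sequent of the form $\chi \seq \chi'$ or $\chi' \seq \chi$ in $\mathcal{ST}^{HC}$, relating the principal formula of the premise to that of the conclusion. Cutting the premise of the elimination rule against this auxiliary sequent then yields its conclusion, since $\Gamma,\emptyset=\Gamma$ and $\Delta,\emptyset=\Delta$ as sets. In each case the auxiliary sequent is obtained from an instance of (ID) by one application of an introduction rule that $\mathcal{ST}^{HC}$ retains, possibly preceded by weakening. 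The exchange rules (WEXCHL), (WEXCHR) should not be needed for this lemma.

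\emph{Propositional eliminations.} For ($\land$R$\uparrow$), from $\Gamma \seq \Delta, \varphi\land\psi$ I cut with $\varphi\land\psi \seq \varphi$. The latter comes from $\varphi\seq\varphi$ by (WL) and ($\land$L$\downarrow$). For ($\land$L$\uparrow$), from $\varphi\land\psi,\Gamma\seq\Delta$ I need to recover $\varphi,\psi,\Gamma\seq\Delta$. I cut with $\varphi,\psi\seq\varphi\land\psi$, which is obtained from two identities, weakening and ($\land$R$\downarrow$). The rules for $\lor$ are dual. For negation I use $\lnot\varphi,\varphi\seq\emptyset$ and $\emptyset\seq\varphi,\lnot\varphi$, which follow from $\varphi\seq\varphi$ by ($\lnot$L$\downarrow$) and ($\lnot$R$\downarrow$) respectively.

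\emph{Quantifier eliminations.} Write $w$ for $\wit(\forall x\,\varphi)$ and $e$ for $\wit(\exists x\,\varphi)$.
\begin{itemize}
\item For ($\forall$LW$\uparrow$), I cut $\forall x\,\varphi,\Gamma\seq\Delta$ with $\varphi[x\mapsto w]\seq\forall x\,\varphi$, which comes from (ID) by ($\forall$RW$\downarrow$).
\item For ($\forall$RW$\uparrow$), I cut with $\forall x\,\varphi\seq\varphi[x\mapsto w]$, which comes from (ID) by ($\forall$LW$\downarrow$).
\item The existential cases are symmetric. I use $\varphi[x\mapsto e]\seq\exists x\,\varphi$, obtained by ($\exists$RW$\downarrow$), and $\exists x\,\varphi\seq\varphi[x\mapsto e]$, obtained by ($\exists$LW$\downarrow$).
\end{itemize}

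\emph{Witness eliminations.} For (UWE), from $\Gamma\seq\Delta,\varphi[x\mapsto w]$ I must derive $\Gamma\seq\Delta,\varphi[x\mapsto t]$. Applying (UWI), which is an introduction rule and hence present, to the identity $\varphi[x\mapsto t]\seq\varphi[x\mapsto t]$ gives $\varphi[x\mapsto w]\seq\varphi[x\mapsto t]$. A cut on $\varphi[x\mapsto w]$ finishes. Dually, for (EWE) I apply (EWI) to the same identity to get $\varphi[x\mapsto t]\seq\varphi[x\mapsto e]$, and cut against $\varphi[x\mapsto e],\Gamma\seq\Delta$.

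The only point requiring care, and the step I expect to be the main obstacle, is the side condition that $t$ be substitutable for $x$ in $\varphi$. This condition is already part of the statement of (UWE) and (EWE), so it transfers verbatim to the (UWI)/(EWI) steps used above. If a proviso arises anyway, for instance because the bound-variable renaming of Lemma~\ref{lemma: alpha equivalence} is needed, I will check that the proof of that lemma uses only introduction rules together with a cut at the appropriate point, so that it remains available in $\mathcal{ST}^{HC}$.
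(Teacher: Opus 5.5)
Your proposal is correct and takes essentially the same approach as the paper: the paper handles (UWE) and (EWE) exactly as you do, applying (UWI)/(EWI) to an identity and cutting against the premise. The paper writes out only these two witness cases and leaves the propositional and quantifier eliminations implicit; your cuts against introduction-derived sequents such as $\varphi\land\psi \seq \varphi$ and $\varphi[x\mapsto \wit(\forall x\,\varphi)] \seq \forall x\,\varphi$ fill in those cases correctly, without needing (WEXCHL)/(WEXCHR) or any bound-variable renaming.
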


\begin{proof}
    We confine ourselves to the witness elimination rules. The following proof trees derive the conclusions of (UWE) and (EWE) from their respective premises:
\begin{prooftree}
\def\fCenter{\seq}
\AxiomC{$\Gamma \fCenter \Delta, \varphi(\wit (\forall x\,\varphi))$}\RightLabel{\scriptsize (UWI)}
\AxiomC{$\varphi (t) \fCenter \varphi (t)$}
\UnaryInf$\varphi(\wit (\forall x\,\varphi)) \fCenter \varphi (t)$\RightLabel{\scriptsize (CUT)}
\BinaryInf$\Gamma \fCenter \Delta, \varphi (t)$
\end{prooftree}
\begin{prooftree}
\def\fCenter{\seq}
\AxiomC{$\varphi(\wit (\exists x\,\varphi)), \Gamma \fCenter \Delta$}\RightLabel{\scriptsize (EWI)}
\AxiomC{$\varphi (t) \fCenter \varphi (t)$}
\UnaryInf$\varphi (t) \fCenter \varphi(\wit (\exists x\,\varphi))$\RightLabel{\scriptsize (CUT)}
\BinaryInf$\varphi (t), \Gamma \fCenter \Delta $
\end{prooftree}
\end{proof}

In view of the foregoing lemma, we can view $ \mathcal{ST}^{HC}$ as an extension of $ \mathcal{ST}^{H}$.

\begin{theorem}\label{faccetontu}
    If $ X \vdash_{\mathcal{ST}^{HC}} \Gamma \seq \Delta$, then $ X^E \vdash_{\mathcal{E}} \Gamma^E \seq \Delta^E$.
\end{theorem}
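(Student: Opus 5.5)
We argue by induction on the length of a derivation of $\Gamma \seq \Delta$ from $X$ in $\mathcal{ST}^{HC}$, showing that every sequent $\Pi \seq \Sigma$ occurring in it satisfies $X^E \vdash_{\mathcal{E}} \Pi^E \seq \Sigma^E$. Here $X^E$ is read as the set of translated premisses $\{\Pi^E \seq \Sigma^E \mid \Pi \seq \Sigma \in X\}$.

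\textbf{Base cases and easy rules.} Leaves that belong to $X$ are translated into members of $X^E$. Instances of (ID) go to instances of (ID), because $(\cdot)^E$ is a function on formulas. The translation commutes with the connectives, so the propositional introduction rules and (WL), (WR), (CUT) are sent to instances of the same rules in $\mathcal{E}$. The bidirectional propositional rules of $\mathcal{ST}^P$ deserve a remark, since $\mathcal{E}$ has only introductions. In $\mathcal{ST}^{HC}$ all elimination rules have been removed, so only their top-down directions are available and nothing further is needed. If the reader takes the propositional inverses as still present, each is derivable in $\mathcal{E}$ by one (CUT) against an identity-based derivation: for instance $\varphi\wedge\psi\seq\varphi$ inverts ($\land$R).

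\textbf{Quantifier and witness rules.} This is the core of the proof; we use Lemma~\ref{careganzu} throughout to move $(\cdot)^E$ past substitutions.
\begin{itemize}
\item For ($\exists$RW$\downarrow$) the translated premiss is $\Gamma^E\seq\Delta^E,\varphi^E[x\mapsto\epsilon_x\varphi^E]$, and ($\exists$R$\epsilon$) with $t=\epsilon_x\varphi^E$ gives the conclusion.
\item ($\exists$LW$\downarrow$) translates literally into ($\exists$L$\epsilon$).
\item ($\forall$LW$\downarrow$) becomes ($\forall$L$\epsilon$) with $t=\epsilon_x\lnot\varphi^E$.
\item ($\forall$RW$\downarrow$) becomes ($\forall$R$\epsilon$).
\item Upward directions such as ($\forall$LW$\uparrow$) are obtained by cutting with a derivable sequent. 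For ($\forall$LW$\uparrow$) we cut with $\varphi^E[x\mapsto\epsilon_x\lnot\varphi^E]\seq\forall x\,\varphi^E$, which we derive as follows. Start from identity and apply ($\lnot$R) to get $\seq\lnot\varphi^E(\epsilon),\varphi^E(\epsilon)$. Then ($\exists$R$\epsilon$) with $t=\epsilon_x\lnot\varphi^E$ gives $\seq\exists x\lnot\varphi^E,\varphi^E(\epsilon)$. The step from there to $\forall x\,\varphi^E$ is not available directly, so instead we take identity on $\varphi^E(\epsilon)$ and apply ($\forall$R$\epsilon$), which yields exactly $\varphi^E(\epsilon)\seq\forall x\,\varphi^E$.
\item The other upward rules are dual. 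For ($\exists$RW$\uparrow$) we cut with $\exists x\,\varphi^E\seq\varphi^E(\epsilon_x\varphi^E)$, which follows from identity by ($\exists$L$\epsilon$).
\end{itemize}

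\textbf{The witness introduction rules (UWI) and (EWI).} Take (EWI), with translated premiss $\Gamma^E\seq\Delta^E,\varphi^E(t^E)$. We derive $\varphi^E(t^E)\seq\varphi^E(\epsilon_x\varphi^E)$ by passing through $\exists x\,\varphi^E$:
\begin{enumerate}
\item from identity, ($\exists$R$\epsilon$) gives $\varphi^E(t^E)\seq\exists x\,\varphi^E$;
\item from identity, ($\exists$L$\epsilon$) gives $\exists x\,\varphi^E\seq\varphi^E(\epsilon_x\varphi^E)$;
\item cutting these two yields $\varphi^E(t^E)\seq\varphi^E(\epsilon_x\varphi^E)$.
\end{enumerate}
A further cut with the translated premiss gives the conclusion of (EWI). (UWI) is handled symmetrically via $\forall x\,\varphi^E$, using ($\forall$L$\epsilon$) and ($\forall$R$\epsilon$).

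\textbf{The exchange rules and the main obstacle.} (WEXCHL) and (WEXCHR) translate to replacing $\epsilon_x\lnot\psi^E$ by $\epsilon_x(\lnot\psi)^E$. By Definition~\ref{scimproriu} these are the same term, so the translated premiss and conclusion coincide syntactically. This is exactly why the exchange rules were added.

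The main obstacle is bookkeeping with variables. Lemma~\ref{careganzu} must apply in the presence of bound variables inside $\epsilon$-terms, and substitutability of $t^E$ must be preserved by the translation. For the latter we plan to rely on Lemma~\ref{lemma: alpha equivalence} and the analogous $\alpha$-renaming in $\mathcal{E}$.
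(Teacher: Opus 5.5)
Your proposal is essentially the paper's own proof. It uses induction on the derivation, sends the top-down quantifier rules directly to ($\exists$L$\epsilon$), ($\exists$R$\epsilon$), ($\forall$L$\epsilon$), ($\forall$R$\epsilon$), lets the exchange rules collapse to syntactically identical sequents, and obtains (EWI)/(UWI) by a (CUT) through $\exists x\,\varphi^E$ (resp.\ $\forall x\,\varphi^E$) via Lemma~\ref{careganzu}; your upward cases are harmless extras, since $\mathcal{ST}^{HC}$ drops those directions.

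The variable issue you defer is genuine, however, and the paper's proof leaves it unaddressed too. A Henkin constant $c=\wit(\exists y\,P(y,x))$ is closed, while $c^E=\epsilon_y P(y,x)$ is not, so Lemma~\ref{careganzu} fails for $\varphi(x)=P(c,x)$. Concretely, the $\mathcal{ST}^{HC}$-derivable sequent $\forall x\,P(c,x)\seq P(c,w)$ (by (ID), (UWI), ($\forall$LW$\downarrow$)) translates literally to $\forall x\,P(\epsilon_y P(y,x),x)\seq P(\epsilon_y P(y,x),w)$. This is refuted in $\epsilon$-semantics by taking $P$ to be identity on a two-element domain with $x$ and $w$ assigned distinct elements. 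So the repair must be built into the translation, by renaming bound variables apart from the free variables of the $\epsilon$-terms; $\alpha$-renaming after the fact will not do it.
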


\begin{proof}
    We proceed by induction on the size of a smallest derivation $\mathcal{D}$ of $\Gamma \seq \Delta$ from $X$ in $ \mathcal{ST}^{HC}$.
    
    Base. Suppose that $ \vdash_{\mathcal{ST}^{HC}} \varphi \seq \varphi$. By Definition \ref{scimproriu}, $\varphi^E$ is a $\mathcal{L}^E$-formula, whence $ \vdash_{\mathcal{E}} \varphi^E \seq \varphi^E$. Similarly, if $\Pi \seq \Sigma \in X$, $\Pi^E \seq \Sigma^E \in X^E$ and thus $ X^E \vdash_{\mathcal{E}} \Pi^E \seq \Sigma^E$.

    Inductive step. Observe that the cases of (WEXCHL) and (WEXCHR) are trivial. We check two of all the non-trivial remaining cases, the other ones being similar to either the former or the latter.

    As regards (EWI), by inductive hypothesis $ X^E \vdash_{\mathcal{E}} \Gamma^E \seq \Delta^E, (\varphi[x \mapsto t])^E$. By Lemma \ref{careganzu}, $(\varphi[x \mapsto t])^E = \varphi^E[x \mapsto t^E]$. Consider the following derivation from $X^E$ in $\mathcal{E}$:
\begin{prooftree}
\def\fCenter{\seq}
\AxiomC{\vdots}
\noLine\UnaryInf$\Gamma^E \fCenter \Delta^E, \varphi^E[x \mapsto t^E]$\RightLabel{\scriptsize ($\exists$R$\epsilon$)}
\UnaryInf$\Gamma^E \fCenter \Delta^E, \exists x\, \varphi^E$
\AxiomC{$\varphi^E [x \mapsto \epsilon_x \varphi^E] \fCenter \varphi^E [x \mapsto \epsilon_x \varphi^E] $}\RightLabel{\scriptsize ($\exists$L$\epsilon$)}
\UnaryInf$\exists x\, \varphi^E \fCenter \varphi^E [x \mapsto \epsilon_x \varphi^E]$\RightLabel{\scriptsize (CUT)}
\BinaryInf$ \Gamma^E \fCenter \Delta^E, \varphi^E [x \mapsto \epsilon_x \varphi^E]$
\end{prooftree}
By definition $\varphi^E [x \mapsto \epsilon_x \varphi^E]$ is $\varphi^E[x \mapsto \wit (\exists x\,\varphi)^E] = (\varphi[x \mapsto \wit (\exists x\,\varphi)])^E $, whence our conclusion follows.

We now proceed to the case of ($\exists$LW$\downarrow$). Applying again the IH, Lemma \ref{careganzu}, and Definition \ref{scimproriu}, we obtain that $X^E \vdash_{\mathcal{E}} \varphi^E (\epsilon_x \varphi^E), \Gamma^E \seq \Delta^E$, and by ($\exists$L$\epsilon$), $ X^E \vdash_{\mathcal{E}} \exists x\, \varphi^E, \Gamma^E \seq \Delta^E$.
\end{proof}

In the opposite direction, we define by simultaneous induction a translation of $\mathcal{L}^{\mathcal{E}}$-terms to $\mathcal{L}$-Henkin terms, and of $\mathcal{L}^{\mathcal{E}}$-formulas to $\mathcal{L}$-Henkin-formulas.

\begin{definition}\label{zugata}
    If $t$ is a $\mathcal{L}^{\mathcal{E}}$-term and $\varphi$ is a $\mathcal{L}^{\mathcal{E}}$-formula, then $t^{W}$ is the $\mathcal{L}$-Henkin term defined as follows:
    \begin{enumerate}
        \item $t^{W} := t$ if $t$ is a variable or a constant in $\mathcal{L}$;
        \item $t^{W} := \wit (\exists x\, \psi^{W})$ if $t$ is $\epsilon_{x} \psi$;
        \item $t^{W} := f^{n}(t_1^{W},...,t_n^{W})$ if $t$ is $f^{n}(t_1,...,t_n)$.
    \end{enumerate}
    If $\varphi$ is a $\mathcal{L}^{\mathcal{E}}$-formula, then $\varphi^E$ is the  $\mathcal{L}$-Henkin formula defined as follows:
    \begin{enumerate}
        \item $\varphi^W := P^n(t_1^{W},...,t_n^{W})$ if $\varphi$ is $P^n(t_1,...,t_n)$;
        \item $\varphi^W := \lnot (\psi^W)$ if $\varphi$ is $\lnot \psi$;
        \item $\varphi^W := \psi^W \circ \chi^W$ if $\varphi$ is $\psi \circ \chi$ ($\circ \in \{ \land, \lor \}$);
        \item $\varphi^W := Qx(\psi^W)$ if $\varphi$ is $Qx\psi$ ($Q \in \{ \exists, \forall \}$).
    \end{enumerate}
\end{definition}

Again, we will use the abbreviation $\Gamma^W$ to denote $\{ \varphi^W \mid \varphi \in \Gamma \}$ and $X^W$ to denote $ \{ \Gamma \seq \Delta \mid \Gamma^W \seq \Delta^W \in X \}$.

\begin{lemma}\label{murdegu}
 For any $\mathcal{L}^{\mathcal{E}}$-formula $\varphi(x_1,...,x_n)$, we have that 
\[
(\varphi[x_1 \mapsto t_1,...,x_n \mapsto t_n])^W = \varphi^W[x_1 \mapsto t_1^W,...,x_n \mapsto t_n^W].
\]
\end{lemma}

\begin{theorem}\label{frastimu}
    If $X \vdash_{\mathcal{E}}  \Gamma \seq \Delta$, then $ X^W \vdash_{\mathcal{ST}^{HC}} \Gamma^W \seq \Delta^W$.
\end{theorem}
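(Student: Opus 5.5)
Induction on the size of a smallest derivation $\mathcal{D}$ of $\Gamma \seq \Delta$ from $X$ in $\mathcal{E}$, as in the proof of Theorem~\ref{faccetontu}, showing for each rule of $\mathcal{E}$ that the $W$-translation of its conclusion is derivable in $\mathcal{ST}^{HC}$ from the $W$-translations of its premises. The base cases are immediate: $\varphi^W \seq \varphi^W$ is an instance of (ID), and members of $X$ translate to members of $X^W$. Structural rules, propositional rules and (CUT) translate rule-for-rule, since $(\cdot)^W$ commutes with connectives (clauses (2)--(3) of Definition~\ref{zugata}) and acts elementwise on sets of formulas. The substance lies in the four quantifier rules of Figure~\ref{fig: rules-eps}. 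Each time, Lemma~\ref{murdegu} gives $(\varphi[x \mapsto t])^W = \varphi^W[x \mapsto t^W]$, and Lemma~\ref{lemma: alpha equivalence} lets us rename bound variables so that substitutions are admissible.

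For ($\exists$L$\epsilon$), the translated premise is $\varphi^W[x \mapsto \wit(\exists x\,\varphi^W)], \Gamma^W \seq \Delta^W$, because $(\epsilon_x\varphi)^W = \wit(\exists x\,\varphi^W)$. One application of ($\exists$LW$\downarrow$) then gives $\exists x\,\varphi^W, \Gamma^W \seq \Delta^W$.

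For ($\exists$R$\epsilon$), the premise translates to $\Gamma^W \seq \Delta^W, \varphi^W[x \mapsto t^W]$. Apply (EWI) to replace $t^W$ by $\wit(\exists x\,\varphi^W)$, then ($\exists$RW$\downarrow$).

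For ($\forall$L$\epsilon$), the premise translates to $\varphi^W[x \mapsto t^W], \Gamma^W \seq \Delta^W$. Apply (UWI) to obtain the witness $\wit(\forall x\,\varphi^W)$, then ($\forall$LW$\downarrow$).

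The main obstacle is ($\forall$R$\epsilon$). Here the epsilon term is $\epsilon_x\neg\varphi$, so the premise translates to
\[
\Gamma^W \seq \Delta^W, \varphi^W[x \mapsto \wit(\exists x\,\neg\varphi^W)],
\]
whereas ($\forall$RW$\downarrow$) requires the universal witness $\wit(\forall x\,\varphi^W)$. This is exactly the mismatch that (WEXCHR) is designed to fix. Applying (WEXCHR) bottom-to-top with $\psi := \varphi^W$ and the formula $\varphi^W$ in the position of its $\varphi$ yields $\Gamma^W \seq \Delta^W, \varphi^W[x \mapsto \wit(\forall x\,\varphi^W)]$. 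Then ($\forall$RW$\downarrow$) gives $\Gamma^W \seq \Delta^W, \forall x\,\varphi^W$, which is the required $(\forall x\,\varphi)^W$ by clause (4) of Definition~\ref{zugata}.

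One technical point needs checking first: the translation of $\epsilon_x\neg\varphi$ must be literally $\wit(\exists x\,(\neg\varphi)^W) = \wit(\exists x\,\neg\varphi^W)$, so that it matches the (WEXCHR) pattern. I also need to confirm that substituting the closed witness constants raises no capture issues; since Henkin constants contain no free variables, I expect none.
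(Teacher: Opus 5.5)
Your proposal is correct and follows the paper's proof: induction on the $\mathcal{E}$-derivation, with ($\exists$R$\epsilon$) handled by (EWI) followed by ($\exists$RW$\downarrow$), and ($\exists$L$\epsilon$) by a single application of ($\exists$LW$\downarrow$), using Lemma~\ref{murdegu} to push the translation through substitutions. The paper spells out only these two cases, and your treatment of ($\forall$L$\epsilon$) via (UWI) and ($\forall$LW$\downarrow$), and of ($\forall$R$\epsilon$) via the upward direction of (WEXCHR) followed by ($\forall$RW$\downarrow$), correctly fills in the cases it leaves implicit.
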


\begin{proof}
    Following the footsteps of Theorem \ref{faccetontu}, we proceed by induction on the size of a smallest derivation $\mathcal{D}$ of $\Gamma \seq \Delta$ from $X$ in $ \mathcal{E}$. Our presentation of the different cases will be even more streamlined, in so far as we present only two subcases of the inductive step.
    
    As regards the rule ($\exists$R$\epsilon$), by inductive hypothesis $X^W \vdash_{\mathcal{ST}^{HC}} \Gamma^W \seq \Delta^W, (\varphi[x \mapsto t])^W$. By Lemma \ref{murdegu}, $X^W \vdash_{\mathcal{ST}^{HC}} \Gamma^W \seq \Delta^W, \varphi^W[x \mapsto t^{W}]$. Consider the following derivation from $X^W$:
\begin{prooftree}
\def\fCenter{\seq}
\AxiomC{\vdots}
\noLine\UnaryInfC{$\Gamma^W \seq \Delta^W, \varphi^W[x \mapsto t^{W}]$}\RightLabel{\scriptsize (EWI)}
\UnaryInfC{$\Gamma^W \seq \Delta^W, \varphi^W[x \mapsto \wit (\exists x\, \varphi^{W})]$}\RightLabel{\scriptsize ($\exists$RW)}
\UnaryInfC{$\Gamma^W \seq \Delta^W, \exists x\, \varphi^W$}
\end{prooftree}
By Definition \ref{frastimu}, $\exists x\, \varphi^W$ is $(\exists x\, \varphi)^W$, whence the conclusion of ($\exists$R$\epsilon$) has been established.

As regards the rule ($\exists$L$\epsilon$), $X^W \vdash_{\mathcal{ST}^{HC}} (\varphi[x \mapsto \epsilon_{x} \varphi(x)])^W, \Gamma^W \seq \Delta^W$ holds by the inductive hypothesis, i.e.\ $X^W \vdash_{\mathcal{ST}^{HC}} \varphi^W[x \mapsto \wit (\exists x\, \varphi^{W})], \Gamma^W \seq \Delta^W$. Hence by ($\exists$LW) $X^W \vdash_{\mathcal{ST}^{HC}} \exists x\, \varphi^{W}, \Gamma^W \seq \Delta^W$, which is again enough for our conclusion.
\end{proof}

\begin{theorem}
Let $\varphi$ be a $\mathcal{L}$-Henkin formula and $\psi$ be a $\mathcal{L}^{\mathcal{E}}$-formula. Then $\vdash_{\mathcal{ST}^{HC}} \varphi \seq \varphi^{EW}$ and $\vdash_{\mathcal{ST}^{HC}} \varphi^{EW} \seq \varphi$, and likewise $\vdash_{\mathcal{E}} \psi \seq \psi^{WE}$ and $\vdash_{\mathcal{E}} \psi^{WE} \seq \psi$.
\end{theorem}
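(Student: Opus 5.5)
We prove both halves by simultaneous induction on the construction of terms and formulas. The induction hypothesis states that each subformula is provably equivalent to its round-trip translation in the relevant calculus. For terms we need a companion statement: a term and its round-trip translation may be exchanged inside any formula. Concretely, for every $\mathcal{L}$-Henkin term $t$ and formula $\chi(y)$ we want $\vdash_{\mathcal{ST}^{HC}} \chi[y\mapsto t]\seq \chi[y\mapsto t^{EW}]$ and its converse, and analogously in $\mathcal{E}$ for $\mathcal{L}^{\mathcal{E}}$-terms. Throughout we may use (CUT), since both calculi contain it. First we compute the round trips on terms. For $t=\wit(\exists x\,\varphi)$ we get $t^{EW}=\wit(\exists x\,\varphi^{EW})$. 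For $t=\wit(\forall x\,\varphi)$ we get $t^{EW}=\wit(\exists x\,\lnot\varphi^{EW})$. For $\epsilon_x\psi$ we get $(\epsilon_x\psi)^{WE}=\epsilon_x\psi^{WE}$. Variables, constants and function applications are handled componentwise. Atomic formulas are then literal syntactic identities once the terms agree. The propositional cases follow from the invertible propositional rules together with the identity axiom: decompose both sides, apply the IH to the components, and recompose.

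For quantifiers in $\mathcal{ST}^{HC}$, take $\forall x\,\varphi\seq\forall x\,\varphi^{EW}$. Apply ($\forall$RW$\downarrow$) to reduce it to $\forall x\,\varphi\seq \varphi^{EW}[x\mapsto \wit(\forall x\,\varphi^{EW})]$. Then apply ($\forall$LW$\downarrow$), which needs $\varphi[x\mapsto\wit(\forall x\,\varphi)]$ on the left. Bridging the two witnesses is done with the witness-introduction rules (UWI/EWI) and (CUT). Start from the IH instance $\varphi[x\mapsto c]\seq\varphi^{EW}[x\mapsto c]$ with $c=\wit(\forall x\,\varphi^{EW})$, then apply (UWI) on the left, which replaces $c$ by $\wit(\forall x\,\varphi)$. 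Existential formulas are treated dually with (EWI) and ($\exists$LW/RW). Here the IH must hold for all substitution instances of $\varphi$, so the induction is really on formulas with the statement closed under substituting terms for free variables. We justify this by induction on quantifier depth, combined with Lemma~\ref{careganzu} and Lemma~\ref{murdegu} to commute translations with substitution.

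The term case is the main obstacle. For an existential witness $\wit(\exists x\,\varphi)$ versus $\wit(\exists x\,\varphi^{EW})$ we proceed as follows. Write $a=\wit(\exists x\,\varphi)$ and $b=\wit(\exists x\,\varphi^{EW})$. By IH we have $\varphi[x\mapsto a]\seq\varphi^{EW}[x\mapsto a]$, hence by (EWI) on the right $\varphi[x\mapsto a]\seq\varphi^{EW}[x\mapsto b]$. Still, to interchange $a$ and $b$ inside an arbitrary context $\chi$ we need more than this. I would try first to exploit the paper's convention that witness constants for alphabetic or provably equivalent formulas are interchangeable via the (W)-rules: rather than proving contextual equality of $a$ and $b$ directly, prove $\chi[y\mapsto a]\seq\chi[y\mapsto b]$ by induction on $\chi$, reducing at atomic $\chi$ to the need for an extensionality principle. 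If that fails, the fallback is the observation that the round trip replaces each Henkin constant by a constant indexed by a translated formula. Defining both translations so that $\varphi^{EW}$ and $\varphi$ are syntactically identical up to the (WEXCHL/R) exchange then yields the term clauses trivially, and (WEXCHL), (WEXCHR) handle the universal witnesses $\wit(\forall x\,\varphi)\leftrightarrow\wit(\exists x\,\lnot\varphi)$. Indeed, on $\mathcal{L}$-Henkin syntax the only discrepancy introduced by $E$ followed by $W$ is exactly this replacement. So I expect $\varphi^{EW}$ to coincide with $\varphi$ after finitely many such exchanges, and the $\mathcal{ST}^{HC}$ half then follows by applying those bidirectional rules repeatedly. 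Dually, in $\mathcal{E}$ one has $\psi^{WE}=\psi$ syntactically, since $\epsilon_x\psi\mapsto\wit(\exists x\,\psi^W)\mapsto\epsilon_x\psi^{WE}$, so that half is immediate by the identity axiom.
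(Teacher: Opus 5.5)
Your fallback is the paper's proof. The paper notes that $\varphi^{EW}$ is literally $\varphi$ when no universal Henkin constant occurs, and otherwise inducts on the number of such constants, trading $\wit(\forall x\,\psi)$ for $\wit(\exists x\,\lnot\psi)$ via (WEXCHR) and (CUT). That is the same as your repeated use of the exchange rules on an instance of (ID), and your remark that $\psi^{WE}=\psi$ holds syntactically settles the $\mathcal{E}$ half, which the paper leaves to the reader. Lead with that observation and drop the simultaneous induction before it: its term case cannot be closed through an extensionality principle, since $\mathcal{ST}^{HC}$ has no rule identifying witnesses of merely provably equivalent formulas.
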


\begin{proof}
    We confine ourselves to showing that $\vdash_{\mathcal{ST}^{HC}} \varphi \seq \varphi^{EW}$ for any $\mathcal{L}$-Henkin formula $\varphi$. This claim is proved by induction on the number $n$ of universal Henkin constants occurring in $\varphi$.

    If $n = 0$, then $\varphi$ is the same as $\varphi^{EW}$, and the claim follows from (ID). If $n > 0$, pick some $\wit (\forall x\, \psi)$ occurring in $\varphi$. The inductive hypothesis and Lemmas \ref{careganzu} and \ref{murdegu} give us a proof of the sequent $\varphi[x \mapsto  \wit (\exists x\, \lnot \psi)] \seq \varphi^{EW}[x \mapsto  \wit (\exists x\, \lnot \psi)^{EW}]$. Applying the Cut rule to this sequent and to the result of the derivation
\begin{prooftree}
\def\fCenter{\seq}
\AxiomC{$\varphi[x \mapsto  \wit (\forall x\, \psi)] \fCenter \varphi[x \mapsto  \wit (\forall x\, \psi)]$\RightLabel{\scriptsize (WEXCHR)}}
\UnaryInf$\varphi[x \mapsto  \wit (\forall x\, \psi)] \fCenter \varphi[x \mapsto  \wit (\exists x\, \lnot \psi)]$
\end{prooftree}
  yields a proof of the sequent $\varphi[x \mapsto  \wit (\forall x\, \psi)] \seq \varphi^{EW}[x \mapsto  \wit (\exists x\, \lnot \psi)^{EW}]$. However, since $\wit (\exists x\, \lnot \psi)^{EW}$ is the same as $\wit (\forall x\, \psi)^{EW}$, our conclusion follows by Lemmas \ref{careganzu} and \ref{murdegu}.
\end{proof}

\section{The Calculus $\MQST$}\label{marcopignotti}

\begin{figure}
\caption{Quantifier rules in $\MQST$}
\label{fig: rules-qst}

\medskip

\begin{prooftree}
\def\fCenter{\seq}
\Axiom$\exists x\,\varphi, \Gamma \fCenter \Delta$\RightLabel{\scriptsize ($\exists$LE)}
\UnaryInf$\varphi[x \mapsto t], \Gamma \fCenter \Delta$

\def\fCenter{\seq}
\Axiom$\Gamma \fCenter \Delta, \varphi[x \mapsto  t]$\RightLabel{\scriptsize ($\exists$RI)}
\UnaryInf$\Gamma \fCenter \Delta, \exists x\,\varphi$

\noLine
\BinaryInfC{}
\end{prooftree}

\medskip

\begin{prooftree}

\def\fCenter{\seq}
\Axiom$\varphi[x \mapsto t], \Gamma \fCenter \Delta$\RightLabel{\scriptsize ($\forall$LI)}
\UnaryInf$\forall x\,\varphi, \Gamma \fCenter \Delta$

\def\fCenter{\seq}
\Axiom$\Gamma \fCenter \Delta, \forall x\,\varphi$\RightLabel{\scriptsize ($\forall$RE)}
\UnaryInf$\Gamma \fCenter \Delta, \varphi[x \mapsto t]$

\noLine
\BinaryInfC{}

\end{prooftree}

\medskip

\begin{prooftree}
\def\fCenter{\seq}
\AxiomC{$\mathcal{D}$}
\noLine
\UnaryInfC{$\varphi[x \mapsto y], \Gamma \fCenter \Delta$}\RightLabel{\scriptsize ($\exists$LI)}
\UnaryInfC{$\exists x\,\varphi, \Gamma \fCenter \Delta$}

\AxiomC{$\mathcal{D}$}
\noLine
\UnaryInfC{$\Gamma \fCenter \Delta, \varphi[x \mapsto y]$}\RightLabel{\scriptsize ($\forall$RI)}
\UnaryInfC{$\Gamma \fCenter \Delta, \forall x\,\varphi$}

\noLine
\BinaryInfC{}
\end{prooftree}

\medskip

\begin{prooftree}
\def\fCenter{\seq}
\AxiomC{$\mathcal{D}_{1}$}
\noLine
\UnaryInfC{$\Gamma \fCenter \Delta, \exists x\,\varphi$}
\AxiomC{$[\Gamma \fCenter \Delta, \varphi[x \mapsto y]]$}
\noLine
\UnaryInfC{$\mathcal{D}_{2}$}
\noLine
\UnaryInfC{$\Pi \fCenter \Sigma$}\RightLabel{\scriptsize ($\exists$RE)}
\BinaryInfC{$\Pi \fCenter \Sigma$}

\AxiomC{$\mathcal{D}_{1}$}
\noLine
\UnaryInfC{$\forall x\,\varphi, \Gamma \fCenter \Delta$}
\AxiomC{$[\varphi[x \mapsto y], \Gamma \fCenter \Delta]$}
\noLine
\UnaryInfC{$\mathcal{D}_{2}$}
\noLine
\UnaryInfC{$\Pi \fCenter \Sigma$}\RightLabel{\scriptsize ($\forall$LE)}
\BinaryInfC{$\Pi \fCenter \Sigma$}

\noLine
\BinaryInfC{}
\end{prooftree}

\end{figure}

  The proof theory of the calculus $\STH$ does not appear to be very well-behaved, at least in so far as it is difficult to pin down a definition of a normal proof in the calculus, and consequently it is not clear what a normalisation theorem would have to look like. We now introduce a calculus which avoids the use of Henkin constants. This calculus includes rules that allow for the discharge of premise-sequents. This is not unprecedented in the literature: see \cite{Golan2, Hlobil}.

It is well known that calculi whose sequents are ordered pairs of \emph{sets} of formulas are not very amenable to an effective proof-theoretical analysis. With an eye to obtaining normalisation and interpolation theorems for our calculus, therefore, it is convenient to use a multiset calculus.

A (finite) \emph{multiset} of $\mathcal{L}$-formulas is, formally speaking, a function $\Gamma\colon Fm_\mathcal{L} \to \mathbb{N}$ (where $Fm_\mathcal{L}$ is the set of all $\mathcal{L}$-formulas) such that $\Gamma(\varphi) = 0$ for all but finitely many formulas $\varphi$. We think of $\Gamma(\varphi)$ as the multiplicity of the formula $\varphi$ in the multiset $\Gamma$. If $\Gamma$ is a multiset of $\mathcal{L}$-formulas, we write $\Gamma, \varphi$ for the multiset $\Delta$ such that $\Delta(\varphi) = \Gamma(\varphi) + 1$ and otherwise $\Delta(\psi) = \Gamma(\psi)$. We denote by $\lvert \Gamma \rvert$ the set $\{ \varphi \mid \Gamma (\varphi) > 0 \}$.

A \emph{sequent} in the calculus $\MQST$ is a pair of multisets of $\mathcal{L}$-formulas, written as $\Gamma \seq \Delta$. The structural rules of $\MQST$ are the axiom of Generalised Identity:
\vskip 10pt
\begin{prooftree}
\def\fCenter{\seq}
\AxiomC{}\RightLabel{\scriptsize (GID)}
\UnaryInf$\varphi, \Gamma \fCenter \Delta, \varphi$
\end{prooftree}
\vskip 10pt
and the rule of Contraction:
\begin{prooftree}
\def\fCenter{\seq}
\Axiom$\Gamma \fCenter \Delta, \varphi, \varphi$\RightLabel{\scriptsize (CR)}
\UnaryInf$\Gamma \fCenter \Delta, \varphi$

\Axiom$\varphi, \varphi, \Gamma \fCenter \Delta$\RightLabel{\scriptsize (CL)}
\UnaryInf$\varphi, \Gamma \fCenter \Delta$

\noLine
\BinaryInfC{}
\end{prooftree}
The introduction and elimination rules for the propositional connectives are listed in Figure~\ref{fig: rules}. That is, they are shared with the calculus $\STH$, except for the fact that we now interpret sequents as pairs of multisets. Finally, the introduction and elimination rules for the quantifiers are listed in Figure~\ref{fig: rules-qst}. These are subject to the usual eigenvariable restrictions familiar from the natural deduction calculi for classical and intuitionistic predicate logic. That is, in the rules ($\exists$LI) and ($\forall$RI) the variable $y$ must not occur freely in any of the undischarged premises of $\mathcal{D}$ or in $\Gamma \seq \Delta$. Similarly, in the rules ($\exists$RE) and ($\forall$LE) the variable $y$ must not occur freely in any of the undischarged premises of $\mathcal{D}_{2}$ or in $\Gamma \seq \Delta$ or $\Pi \seq \Sigma$. These last two rules discharge any number of instances of the premise $\Gamma \seq \Delta, \varphi[x \mapsto y]$ and $\varphi[x \mapsto y], \Gamma \seq \Delta$, respectively (as indicated by the outer brackets in Figure~\ref{fig: rules-qst}.)

An \emph{atomic} instance of (GID) is a sequent of the form $\varphi, \Gamma \seq \Delta, \varphi$ where $\varphi$ is an atomic formula. A \emph{weakening} of a sequent $\Gamma \seq \Delta$ is a sequent of the form $\Gamma, \Gamma' \seq \Delta, \Delta'$. A \emph{derivation} of $S$ from a set of sequents $X$ in $\MQST$ is an appropriately labelled proof tree where the conclusion is $S$ and each undischarged assumption is a sequent in $X$. The following two lemmas are straightforward to prove by induction over the complexity of proofs.

\begin{lemma} \label{lemma: atomic identity}
  Every instance of (GID) is derivable in~$\MQST$ using only atomic instances of (GID) and introduction rules.
\end{lemma}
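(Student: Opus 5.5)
The plan is to prove, by induction on the complexity of the formula $\varphi$, that every instance $\varphi, \Gamma \seq \Delta, \varphi$ of (GID) has a derivation in $\MQST$ that uses only atomic instances of (GID) and introduction rules (propositional introductions from Figure~\ref{fig: rules} applied top-down, together with ($\exists$RI), ($\forall$LI), ($\exists$LI), ($\forall$RI)). The statement must hold uniformly for all multisets $\Gamma, \Delta$, because the context has to be enlarged in the inductive step. If $\varphi$ is atomic, the instance is itself an atomic instance of (GID) and nothing needs proving.

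For the propositional cases, the hypothesis is applied to the immediate subformulas with suitably enlarged contexts. For $\varphi = \neg\psi$, take the instance $\psi, \Gamma \seq \Delta, \psi$. Apply ($\lnot$L) to move $\psi$ on the right to $\neg\psi$ on the left, and then ($\lnot$R) to move $\psi$ on the left to $\neg\psi$ on the right. For $\varphi = \psi \wedge \chi$, take the instances $\psi, \chi, \Gamma \seq \Delta, \psi$ and $\psi, \chi, \Gamma \seq \Delta, \chi$ supplied by the hypothesis. Apply ($\land$R) to get $\psi,\chi,\Gamma \seq \Delta, \psi\wedge\chi$, and then ($\land$L). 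The case of $\vee$ is dual: first ($\lor$R) on each of $\psi, \Gamma \seq \Delta, \psi, \chi$ and $\chi, \Gamma \seq \Delta, \psi, \chi$, then ($\lor$L). Since the context in (GID) is arbitrary, these side formulas are simply absorbed into it, and no weakening rule is needed.

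For the quantifier cases, take $\varphi = \forall x\,\psi$, and choose a variable $y$ that does not occur free in $\forall x\,\psi$, $\Gamma$ or $\Delta$; renaming bound variables if necessary ensures that $y$ is substitutable for $x$. Since $\psi[x\mapsto y]$ has lower complexity, the hypothesis gives a derivation of $\psi[x\mapsto y], \Gamma \seq \Delta, \psi[x\mapsto y]$. Applying ($\forall$LI) with $t = y$ yields $\forall x\,\psi, \Gamma \seq \Delta, \psi[x\mapsto y]$, and then ($\forall$RI) yields $\forall x\,\psi, \Gamma \seq \Delta, \forall x\,\psi$. The existential case is symmetric: first ($\exists$RI) with $t=y$, then ($\exists$LI).

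The main point to check is the eigenvariable condition for ($\forall$RI) and ($\exists$LI). The derivation built so far has no undischarged premises other than axioms, since it consists only of (GID) instances and introduction rules and no discharge occurs. So the only requirement is that $y$ not occur free in the conclusion context $\forall x\,\psi, \Gamma \seq \Delta$, and this holds by the choice of $y$. It must also be checked that the induction measure is not disturbed by substituting the variable $y$, which is clear because the complexity of a formula counts connectives and quantifiers and a variable substitution leaves that count unchanged.
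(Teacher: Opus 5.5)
Your proof is correct and is the straightforward induction on the complexity of $\varphi$, uniform in the contexts $\Gamma,\Delta$, that the paper only asserts without writing out. One small tidy-up: renaming bound variables of $\psi$ would change the formula, and $\MQST$ has no $\alpha$-conversion lemma like Lemma~\ref{lemma: alpha equivalence}, so instead pick $y$ not occurring at all in $\forall x\,\psi$, $\Gamma$, $\Delta$; such a $y$ is automatically substitutable for $x$ and satisfies the eigenvariable condition.
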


\begin{lemma} \label{lemma: renaming variables}
  Let $\mathcal{D}$ be a derivation of $S$ from $X$, and let $z$ be a variable which does not occur anywhere in $\mathcal{D}$. Then substituting all free occurrences of a variable $x$ by $z$ throughout $\mathcal{D}$ yields a derivation of $S[x \mapsto z]$ from $X[x \mapsto z]$.
\end{lemma}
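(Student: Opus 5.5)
The plan is to argue by induction on the height of $\mathcal{D}$, showing that every node of the renamed tree $\mathcal{D}[x \mapsto z]$ is a correct application of the same rule as the corresponding node of $\mathcal{D}$. Throughout I use one syntactic fact about substitution: since $z$ occurs nowhere in $\mathcal{D}$ (neither free nor bound), renaming free $x$ to $z$ commutes with all the term substitutions occurring in rules. Concretely, $(\varphi[x' \mapsto t])[x \mapsto z] = \varphi[x \mapsto z][x' \mapsto t[x \mapsto z]]$ whenever $x' \neq x$. When $x' = x$, the variable $x$ is bound in $Q x\,\varphi$, so the renaming does not touch $\varphi$, and we only rename $t$. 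Moreover, $t[x \mapsto z]$ remains substitutable for $x'$, because $z$ is not bound anywhere in $\varphi$. I prove this by a routine induction on formulas.

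For the base case, atomic instances of (GID) map to instances of (GID), and undischarged assumptions $S' \in X$ map to elements of $X[x \mapsto z]$. The structural rules (CL), (CR) and the propositional rules commute with renaming trivially, since they act on formula occurrences whose shape is preserved. For ($\exists$RI), ($\forall$LI), ($\exists$LE) and ($\forall$RE), the term $t$ is simply replaced by $t[x \mapsto z]$, using the fact above.

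The eigenvariable rules ($\exists$LI), ($\forall$RI), ($\exists$RE) and ($\forall$LE) need more care, and I expect them to be the main obstacle. There are two subcases. \textbf{Case 1:} the eigenvariable $y$ differs from $x$. Then $\varphi[x' \mapsto y]$ becomes $\varphi[x \mapsto z][x' \mapsto y]$. The eigenvariable condition is preserved: $y \neq z$ because $z$ does not occur in $\mathcal{D}$, and the renaming introduces only $z$ and removes only free $x$, so $y$ still does not occur freely in $\Gamma \seq \Delta$, $\Pi \seq \Sigma$, or the relevant undischarged premises. \textbf{Case 2:} $y = x$. Then the renaming also hits the eigenvariable, turning the premise into $\varphi[x' \mapsto z]$. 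Here $z$ plays the role of the new eigenvariable, and it satisfies the condition because $x$ did not occur freely in the side sequents or the undischarged premises. Those premises are now exactly the renamed ones, and $z$ is fresh for them since $z$ did not occur in $\mathcal{D}$ and free $x$ was absent there.

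For discharge in ($\exists$RE) and ($\forall$LE), the discharged assumptions $\Gamma \seq \Delta, \varphi[x' \mapsto y]$ are transformed uniformly with the rule instance, so the same occurrences are discharged. The set of undischarged assumptions of the new derivation is therefore exactly the image of the old set, namely a subset of $X[x \mapsto z]$. Combining these cases completes the induction, and the conclusion of $\mathcal{D}[x \mapsto z]$ is $S[x \mapsto z]$.
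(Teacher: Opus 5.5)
Your proposal is correct and takes the same approach as the paper, which only says the lemma follows by a straightforward induction over derivations. Your rule-by-rule check, including the case split on whether the eigenvariable is $x$ itself (so that $z$ becomes the new eigenvariable), is exactly what that induction involves. One small imprecision: the base case should cover every instance of (GID), not only atomic ones, since the lemma concerns arbitrary derivations; the renamed sequent is again an instance of (GID) in exactly the same way.
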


\begin{lemma} \label{lemma: weakening admissible}
  The rule of Weakening is admissible in~$\MQST$:
\begin{prooftree}
\def\fCenter{\seq}
\Axiom$\Gamma \fCenter \Delta$\RightLabel{\scriptsize (WL)}
\UnaryInf$\varphi, \Gamma \fCenter \Delta$

\def\fCenter{\seq}
\Axiom$\Gamma \fCenter \Delta$\RightLabel{\scriptsize (WR)}
\UnaryInf$\Gamma \fCenter \Delta, \varphi$

\noLine
\BinaryInfC{}
\end{prooftree}
  In particular, if $S$ is derivable from $X$, then so is every weakening of $S$.
\end{lemma}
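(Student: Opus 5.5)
We prove admissibility of Weakening by induction on the height of a derivation $\mathcal{D}$ of $\Gamma \seq \Delta$ from $X$. The claim we carry through the induction is stronger than the stated rule. For any derivation $\mathcal{D}$ of $S$ from a set of undischarged assumptions $X$, and for any multisets $\Gamma', \Delta'$, there is a derivation of $S \sqcup (\Gamma' \seq \Delta')$ from $X$ itself. The assumptions must not be weakened: an assumption in $X$ has to stay literally as it is. The final "in particular" clause then follows by adding one formula at a time, or directly from the claim.

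The base cases are immediate. If $S$ is an instance of (GID), then so is any weakening of it, since (GID) already has arbitrary side contexts. If $S \in X$ is an undischarged assumption, we keep $S$ as a leaf and must still reach $S \sqcup (\Gamma' \seq \Delta')$. So instead of weakening leaves, we transform the derivation top-down along the path from the root. For each rule, the conclusion is weakened by pushing $\Gamma',\Delta'$ into the premises. This works because every propositional rule of Figure~\ref{fig: rules} in both directions, (CL), (CR), and the quantifier rules ($\exists$LE), ($\exists$RI), ($\forall$LI), ($\forall$RE) are context-preserving: weakening premises and conclusion by the same $\Gamma',\Delta'$ gives another instance.

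The subtlety is that this still requires weakening assumption leaves. To avoid it, we exploit the shape of ($\exists$RE) and ($\forall$LE). Their conclusion $\Pi \seq \Sigma$ is the same as the conclusion of the minor premise derivation $\mathcal{D}_2$, and it is unrelated to $\Gamma \seq \Delta$. So the right formulation is an induction on $\mathcal{D}$ with the following case split.
\begin{itemize}
\item For ($\exists$RE) and ($\forall$LE), we leave $\mathcal{D}_1$ untouched. We weaken only $\mathcal{D}_2$, which by the induction hypothesis has conclusion $\Pi,\Gamma' \seq \Sigma,\Delta'$. Any discharged leaves in $\mathcal{D}_2$ remain the same, so discharge is still well formed.
\item For context-preserving rules, we apply the induction hypothesis to the premises.
\item For an undischarged leaf $S\in X$ in the root position, we cannot simply weaken. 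So we prove first the version of the lemma where the weakening is allowed to reach the leaves, then eliminate this. The move is to use an elimination-rule trick. From $S=\Gamma\seq\Delta$, derive $\Gamma,\Gamma'\seq\Delta,\Delta'$ by introducing a dummy quantifier and eliminating it. One option is to apply ($\exists$RE) to the $\mathcal{D}_1$ given by $\Gamma\seq\Delta,\exists x\,\varphi$; but producing that from $\Gamma\seq\Delta$ already needs weakening. So instead we note that the propositional rules suffice. Let $\psi$ be any formula of $\Delta'$, say. Using (GID) $\psi\seq \psi$, we can derive $\Gamma,\Gamma' \seq \Delta,\Delta'$ from $\Gamma\seq\Delta$ via the ($\land$R)/($\land$R$\uparrow$) combination. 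First form $\Gamma \seq \Delta, \ldots$ using (GID)-instances with the large context $\Gamma,\Gamma'\seq\Delta,\Delta'$. Then use an invertible rule: ($\lor$L) applied to the premises $\Gamma\seq\Delta$ (after $\lnot$-rules) and an identity gives a disjunction, and its inverse ($\lor$L$\uparrow$) returns it with the extra context.
\end{itemize}

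Pinning down this last gadget is where I expect the real obstacle. Concretely, I would try the following. From the (GID) instance $\varphi\wedge\lnot\varphi,\Gamma,\Gamma'\seq\Delta,\Delta'$ and $S$ itself, obtain by ($\lnot$R), ($\lor$R) and ($\land$R) a conclusion whose upward inversion, via ($\land$R$\uparrow$) and ($\lor$R$\uparrow$), yields the weakened $S$. This needs context matching, and the binary rules require equal contexts, which is exactly the difficulty. If no such gadget exists, I would fall back on the other route. Under the paper's convention, we may check whether derivations from $X$ are only required to preserve the set $X$ up to weakening. Alternatively, we may show that weakened assumptions are derivable by pushing weakening into the minor derivation of an elimination rule, as in the first case of the split. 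The inductive step for the eigenvariable rules ($\exists$LI), ($\forall$RI), ($\exists$RE), ($\forall$LE) first uses Lemma~\ref{lemma: renaming variables} to rename $y$ to a fresh $z$ not occurring in $\Gamma',\Delta'$. This way the eigenvariable condition survives the weakening.
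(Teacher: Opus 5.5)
Your main line (induction on the derivation, pushing $\Gamma',\Delta'$ through the context-preserving rules, renaming eigenvariables with Lemma~\ref{lemma: renaming variables}) is the paper's. But the proposal does not close, and it fails exactly where you suspect: an undischarged assumption leaf would have to be weakened without being changed.

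Your special treatment of ($\exists$RE) and ($\forall$LE) does not avoid this; it only relocates it. Keeping $\mathcal{D}_1$ and the discharged leaves fixed while weakening the conclusion of $\mathcal{D}_2$ needs the same strong induction hypothesis, and pushing $\Gamma',\Delta'$ up through $\mathcal{D}_2$ inevitably reaches those leaves. The propositional gadget you look for does not exist either, as the invariant below shows. All rules other than (GID) preserve contexts, and the binary ones need identical contexts.

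What does go through is the uniform version. Add $\Gamma',\Delta'$ to \emph{every} sequent of the derivation, including $\mathcal{D}_1$ and the assumptions discharged by the sidetrack rules. These then still match the weakened major premise $\Gamma,\Gamma'\seq\Delta,\Delta',\exists x\,\varphi$, once $y$ is renamed away from $\Gamma',\Delta'$. Instances of (GID) remain instances of (GID), so this gives $X\sqcup(\Gamma'\seq\Delta')\vdash_{\MQST}S\sqcup(\Gamma'\seq\Delta')$, and hence genuine admissibility for $X=\emptyset$. That is all the paper's ``straightforward induction'' really proves; it passes over the base case $S\in X$ in silence.

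The clause with $X$ held fixed is not merely hard to prove: it is false in general. Take a signature whose relation symbols are all nullary, say $p,q$. Then no formula has free variables and all quantifiers are vacuous. Assign to each formula sets of \emph{branches}, i.e.\ pairs $(L,R)$ of sets of atoms:
\begin{itemize}
\item $\mathrm{Br}_L(a)=\{(\{a\},\emptyset)\}$ and $\mathrm{Br}_R(a)=\{(\emptyset,\{a\})\}$;
\item negation swaps $\mathrm{Br}_L$ and $\mathrm{Br}_R$;
\item $\mathrm{Br}_R(\varphi\land\psi)=\mathrm{Br}_R(\varphi)\cup\mathrm{Br}_R(\psi)$ and $\mathrm{Br}_L(\varphi\land\psi)=\mathrm{Br}_L(\varphi)\otimes\mathrm{Br}_L(\psi)$, where $A\otimes B=\{(L\cup L',R\cup R')\mid (L,R)\in A,\ (L',R')\in B\}$, and dually for $\lor$;
\item $\mathrm{Br}(Qx\,\varphi)=\mathrm{Br}(\varphi)$.
\end{itemize}
A sequent's branches are the $\otimes$-product over its formula occurrences, and a branch is closed if $L\cap R\neq\emptyset$.

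Every instance of (GID) has only closed branches (induction on $\varphi$). For every other rule, each branch of the conclusion is a branch of some premise: union for ($\land$R$\downarrow$) and ($\lor$L$\downarrow$), inclusion for their inverses and for contraction, equality otherwise. The assumptions discharged by ($\exists$RE) and ($\forall$LE) have the same branches as the major premise. By induction, every non-closed branch of a sequent derivable from $X$ is a branch of a member of $X$.

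Thus $q\seq p$, whose only branch $(\{q\},\{p\})$ is not closed, is not derivable from $\{\emptyset\seq p\}$. Yet it is a weakening of $\emptyset\seq p$, and $\{\emptyset\seq p\}\models_{\mathrm{ST}} q\seq p$. So you should aim for the uniform statement. Steps that weaken assumptions, such as the end of the proof of Lemma~\ref{lemma: pcp for qst}, need Weakening as a primitive rule.
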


\begin{proof}
  This is a straightforward induction on the depth of the derivation of $S$ from $X$, using Lemma~\ref{lemma: renaming variables} to rename variables in the case of rules with eigenvariable conditions. For example, if $S$ is the sequent $\Gamma \seq \Delta, \forall x\,\varphi$ and the last step of the proof infers $S$ from $\Gamma \seq \Delta, \varphi[x \mapsto z]$, fix multisets of formulas $\Gamma',\Delta'$ and consider a variable $z'$ which occurs neither in $S$ nor in $X$, $ \lvert \Gamma \rvert$, $\lvert \Gamma' \rvert$, $\lvert \Delta \rvert$, $\lvert \Delta' \rvert$. Then by Lemma~\ref{lemma: renaming variables} there is a derivation of $\Gamma \seq \Delta, \varphi[x \mapsto z']$ from $X$, so by the inductive hypothesis there is a derivation of $\Gamma, \Gamma' \seq \Delta, \Delta', \varphi[x \mapsto z']$, and we can now infer $\Gamma, \Gamma' \seq \Delta, \Delta', \forall x\,\varphi$ from this sequent.
\end{proof}

  The proof of the soundness theorem is a straightforward induction on the complexity of a proof entirely analogous to the soundness theorem for classical or intuitionistic logic. In its statement we use the following notation: given a sequent $S := \Gamma \seq \Delta$ and a set of sequents $X$, we define $\lvert S \rvert$ as $ \lvert \Gamma \rvert \seq \lvert \Delta \rvert$ and $\lvert X \rvert$ as $\{ \lvert S \rvert \mid S \in X \}$.

  \begin{theorem}
      If $X \vdash_{\MQST} S$, then $\lvert X \rvert \models_{\mathrm{ST}} \lvert S \rvert$.
  \end{theorem}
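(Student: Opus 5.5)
We prove a stronger, \emph{local} statement by induction on the construction of a derivation $\mathcal{D}$ in $\MQST$. \textbf{Claim:} let $\mathcal{D}$ be a derivation of $S$ whose undischarged assumptions form the set $Y$. Then for every $\mathrm{ST}$-model $\mathsf{M} = \langle D, I\rangle$ (the interpretation $I$ includes the values of the variables), if $\mathsf{M} \models_{\mathrm{ST}} \lvert S' \rvert$ for all $S' \in Y$, then $\mathsf{M} \models_{\mathrm{ST}} \lvert S \rvert$.

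Two points about this formulation matter. First, the claim is stated pointwise for each model, not for validity. This is exactly what local metainferential consequence requires, and it is also what makes the discharging rules go through. Second, the claim has to range over derivations with arbitrary open assumptions, because the minor subderivation $\mathcal{D}_2$ of ($\exists$RE) and ($\forall$LE) has the discharged sequent among its open premises. Passing from $S$ to $\lvert S \rvert$ is harmless: $\mathrm{ST}$-satisfaction depends only on which formulas occur, not on their multiplicities. We use throughout the standard substitution lemma $I(\varphi[x \mapsto t]) = I'(\varphi)$, where $I'$ is the $x$-variant of $I$ with $I'(x) = I(t)$ and $t$ is substitutable for $x$. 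We also use coincidence: $I(\varphi)$ depends only on the values of the free variables of $\varphi$. Once the claim is established, the theorem follows by taking $Y \subseteq X$.

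\textbf{Routine cases.} (GID) and contraction are immediate. Each propositional rule is sound in both directions because the strong Kleene clauses are invertible with respect to the two designated regions $\{1,\frac12\}$ (tolerant truth) and $\{0,\frac12\}$ (tolerant falsity). For example, $I(\varphi \wedge \psi) \geq \frac12$ holds iff both conjuncts are $\geq \frac12$, which gives ($\wedge$R) and its inverses; the other connectives are similar. The introduction rules ($\exists$RI) and ($\forall$LI) are sound because $I(\varphi[x\mapsto t]) \le I(\exists x\,\varphi)$ and $I(\forall x\,\varphi) \le I(\varphi[x \mapsto t])$. Their elimination counterparts ($\exists$LE) and ($\forall$RE) are sound for the same reason. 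For ($\exists$LE), suppose the premise holds in $\mathsf{M}$ via $I(\exists x\,\varphi) \le \frac12$. Then every instance satisfies $I(\varphi[x\mapsto t]) \le \frac12$. If the premise holds via a formula of $\Gamma$ or $\Delta$ instead, the conclusion holds via the same formula.

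\textbf{Eigenvariable introductions.} For ($\forall$RI), fix $\mathsf{M}$ satisfying the open assumptions of $\mathcal{D}$. Suppose $\mathsf{M}$ does not already satisfy $\Gamma \seq \Delta$. Take an arbitrary $y$-variant $I'$. Since $y$ is not free in the open premises or in $\Gamma \cup \Delta$, the model $I'$ still satisfies the premises and still fails $\Gamma \seq \Delta$. By the induction hypothesis, $I'(\varphi[x \mapsto y]) \ge \frac12$. As $I'$ ranges over all $y$-variants, the substitution lemma gives $I(\forall x\,\varphi) \ge \frac12$. ($\exists$LI) is dual.

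\textbf{The main case: the discharging rules ($\exists$RE) and ($\forall$LE).} Take ($\exists$RE) and fix $\mathsf{M}$ satisfying the open assumptions of the whole derivation. By the induction hypothesis on $\mathcal{D}_1$, $\mathsf{M}$ satisfies $\Gamma \seq \Delta, \exists x\,\varphi$. We now choose a $y$-variant $I'$ of $I$ such that $I'$ satisfies $\Gamma \seq \Delta, \varphi[x\mapsto y]$. There are two cases.
\begin{itemize}
\item If $\mathsf{M}$ satisfies $\Gamma \seq \Delta$, any $y$-variant works, since $y$ is not free in $\Gamma, \Delta$.
\item Otherwise $I(\exists x\,\varphi) \geq \frac12$. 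Because the maximum in the quantifier clause is attained, there is $d \in D$ with the corresponding $x$-variant giving $\varphi$ a value $\geq \frac12$. Let $I'(y) = d$; by the substitution lemma, $I'(\varphi[x\mapsto y]) \ge \frac12$. If $y$ were not substitutable for $x$ in $\varphi$, we would first rename bound variables, which does not change semantic values.
\end{itemize}
The remaining open assumptions of $\mathcal{D}_2$ do not contain $y$ free, so $I'$ satisfies them as well as the discharged sequent. The induction hypothesis applied to $\mathcal{D}_2$ then gives $I' \models_{\mathrm{ST}} \Pi \seq \Sigma$. Since $y$ is not free in $\Pi \seq \Sigma$, it follows that $\mathsf{M} \models_{\mathrm{ST}} \Pi \seq \Sigma$. ($\forall$LE) is symmetric: use a witness $d$ at which $\varphi$ takes a value $\le \frac12$.

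This case is the only delicate point. It is why the claim must be local to a single model and quantified over arbitrary sets of open assumptions; it is also where the eigenvariable conditions and the attainment of the min/max over the three-element chain are essential.
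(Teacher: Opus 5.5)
Your proposal is correct and takes the same route as the paper, which only says that soundness is a straightforward induction on derivations analogous to natural-deduction soundness; your per-model invariant over derivations with open assumptions, including the witness-choosing argument for ($\exists$RE) and ($\forall$LE), is exactly that induction written out. One small point: in the ($\forall$RI) and ($\exists$LI) steps, and in the substitution-lemma step for the sidetrack rules, you tacitly use that $y$ is either $x$ or not free in the quantified formula $\forall x\,\varphi$ or $\exists x\,\varphi$. That requirement is part of the usual eigenvariable condition the paper appeals to, though not of its literal wording, and it is genuinely needed: without it, $\emptyset \seq P(y)\vee\neg P(y)$ would yield the invalid $\emptyset \seq \forall x\,(P(x)\vee\neg P(y))$.
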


  Hereafter, we use the notation
  \begin{align*}
   \Gamma[x \mapsto t] & \assign \set{\gamma[x \mapsto t]}{\gamma \in \Gamma}, \\
   (\Gamma \seq \Delta)[x \mapsto t] & \assign \Gamma[x \mapsto t] \seq \Delta[x \mapsto t], \\
   X[x \mapsto t] & \assign \set{S[x \mapsto t]}{S \in X}.
  \end{align*}
  In particular, the notation $X[x \mapsto t]$ assumes that in each formula in each sequent in $X$ the term $t$ can be substituted for $x$.
  
  \begin{lemma} \label{lemma: substitution}
    If $X \vdash_{\MQST} S$, then $X[x \mapsto t] \vdash_{\MQST} S[x \mapsto t]$.
  \end{lemma}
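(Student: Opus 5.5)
We prove the statement by induction on the depth of a derivation $\mathcal{D}$ of $S$ from $X$ in $\MQST$, showing that substituting $t$ for $x$ throughout $\mathcal{D}$ yields a derivation of $S[x \mapsto t]$ from $X[x \mapsto t]$. The free variables of $t$ can clash with bound variables and with eigenvariables. So, as in the proof of Lemma~\ref{lemma: weakening admissible}, we first normalise $\mathcal{D}$. By Lemma~\ref{lemma: renaming variables} we rename every eigenvariable $y$ of $\mathcal{D}$ (in ($\exists$LI), ($\forall$RI), ($\exists$RE), ($\forall$LE)) to a fresh variable occurring neither in $t$, nor in $X$, nor in $S$, nor elsewhere in $\mathcal{D}$, and distinct from $x$. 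This renaming is legitimate because eigenvariables do not occur free in the conclusion or in undischarged premises, so it leaves the endsequent and the assumptions from $X$ unchanged. The standing convention that $t$ is substitutable for $x$ in $X$ and $S$ takes care of bound variables in the endpoints. For formulas inside $\mathcal{D}$ we either rename bound variables away from $t$ or note that substitutability propagates through the rules.

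With this in place, most cases are routine. Instances of (GID) remain instances of (GID), since substitution commutes with multiset formation. Contraction and the propositional rules of Figure~\ref{fig: rules} commute with substitution, because $(\varphi \circ \psi)[x \mapsto t] = \varphi[x \mapsto t] \circ \psi[x \mapsto t]$. For ($\exists$RI), ($\forall$LI), ($\exists$LE) and ($\forall$RE), the key identity, for $x \neq z$ and $z$ not free in $t$, is
\[
\varphi[z \mapsto s][x \mapsto t] = \varphi[x \mapsto t][z \mapsto s[x \mapsto t]].
\]
It turns an instance with witness term $s$ into an instance with witness term $s[x \mapsto t]$. If the quantified variable $z$ equals $x$, then $x$ is bound in $Qx\,\varphi$, the substitution does not touch that formula, and the premise is handled through the same identity, with $\varphi[x\mapsto s][x\mapsto t] = \varphi[x \mapsto s[x\mapsto t]]$.

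The critical cases are the eigenvariable rules, which I expect to be the main obstacle, especially the discharging rules ($\exists$RE) and ($\forall$LE). There we must check that after substitution the eigenvariable $y$ is still fresh for the new undischarged premises of $\mathcal{D}_2[x \mapsto t]$, for $(\Gamma \seq \Delta)[x \mapsto t]$ and for $(\Pi \seq \Sigma)[x \mapsto t]$. This holds because $y \notin \mathrm{FV}(t)$ and $y \neq x$. We must also check that the discharged assumptions $\Gamma \seq \Delta, \varphi[x' \mapsto y]$ become exactly $\Gamma[x\mapsto t] \seq \Delta[x \mapsto t], \varphi[x \mapsto t][x' \mapsto y]$. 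This follows from the displayed identity applied with $s = y$, since $y[x\mapsto t] = y$. Consequently the same occurrences are discharged, and the rule applies to the substituted subderivations given by the inductive hypothesis. ($\exists$LI) and ($\forall$RI) are handled identically but without discharge. The induction is on depth rather than on the specific derivation, so applying the hypothesis to the renamed subderivations is harmless: Lemma~\ref{lemma: renaming variables} preserves depth.
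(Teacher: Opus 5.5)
Your proposal is correct and follows the paper's own argument. You rename each eigenvariable, via Lemma~\ref{lemma: renaming variables} applied within the relevant subderivation, to a fresh variable not occurring in $t$, and then substitute $t$ for $x$ throughout; the paper asserts this in one line, while you check it rule by rule, including the case where an eigenvariable equals $x$. One caveat on your bound-variable remark: substitutability of $t$ does not in general propagate to intermediate formulas, since eliminations and the sidetrack rules let formulas such as $\forall z\,Q(x,z)$ appear and then vanish before the endsequent, so only your first option (renaming bound variables inside $\mathcal{D}$, or using interderivability of alphabetic variants) works there --- a point the paper itself passes over.
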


  \begin{proof}
    Consider a derivation $\mathcal{D}$ of $S$ from $X$ and pick a topmost application (if any such exists) of one of the rules involving the eigenvariable condition, say the inference from $\Gamma \seq \Delta, \varphi[x \mapsto y]$ to $\Gamma \seq \Delta, \forall x\,\varphi$. Since the variable $y$ does not occur among the premises of the proof, we may uniformly substitute a fresh variable $z$ not occurring in either $\mathcal{D}$ or $t$ for $y$ throughout the subderivation which ends with that application. Repeating this process results in a derivation of $S$ from $X$ where the eigenvariable restrictions only apply to variables which do not occur in $t$. We may now substitute $t$ for $x$ throughout this derivation.
  \end{proof}

    The following lemma states that the sequent counterpart of the standard rule of disjunction elimination is derivable in $\MQST$. We again use the notation
\begin{align*}
  & (\Gamma_{1} \seq \Delta_{1}) \sqcup (\Gamma_{2} \seq \Delta_{2}) \assign \Gamma_{1}, \Gamma_{2} \seq \Delta_{1}, \Delta_{2}, & & X \sqcup S \assign \set{S' \sqcup S}{S' \in X}.
\end{align*}

  \begin{lemma}\label{lemma: pcp for qst}
  Suppose that
  \[
  X, S_{1} \vdash_{\MQST} S \text{ and } X, S_{2} \vdash_{\MQST} S.
  \]
  Then
  \[
  X, S_{1} \sqcup S_{2} \vdash_{\MQST} S.
  \]
\end{lemma}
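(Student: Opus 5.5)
The plan is to follow the proof of Lemma~\ref{lemma: pcp}, adapted to the discharging rules of $\MQST$ and to multisets. Fix derivations $\mathcal{D}_{1}$ of $S$ from $X, S_{1}$ and $\mathcal{D}_{2}$ of $S$ from $X, S_{2}$. We will build a derivation of $S$ from $X, S_{1} \sqcup S_{2}$ in two stages. First, from $\mathcal{D}_{1}$ we obtain a derivation $\mathcal{D}_{1}'$ of $S \sqcup S_{2}$ from $X \sqcup S_{2}$ and $S_{1} \sqcup S_{2}$. Second, from $\mathcal{D}_{2}$ we obtain a derivation $\mathcal{D}_{2}'$ of $S \sqcup S$ from $S \sqcup X$ and $S \sqcup S_{2}$. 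We then graft $\mathcal{D}_{1}'$ onto the open leaves $S \sqcup S_{2}$ of $\mathcal{D}_{2}'$. The remaining open leaves have the form $S' \sqcup S_{2}$ or $S \sqcup S'$ with $S' \in X$, and we close each of them by taking the hypothesis $S'$ and applying Weakening, which is admissible by Lemma~\ref{lemma: weakening admissible}. Finally, since we are working with multisets, $S \sqcup S$ is not literally $S$, so we finish with finitely many applications of (CL) and (CR) to pass from $S \sqcup S$ to $S$.

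The key step is the following auxiliary claim: if $\mathcal{D}$ derives $S'$ from $Y$, then for any sequent $R = \Pi \seq \Sigma$ there is a derivation of $S' \sqcup R$ from $Y \sqcup R$. We prove it by induction on $\mathcal{D}$, adding the context $\Pi$, $\Sigma$ to every sequent of the derivation, including any discharged assumptions. Axioms (GID) remain instances of (GID). The contraction rules and the propositional introduction and elimination rules carry arbitrary side contexts, so they are preserved. The same holds for ($\exists$LE), ($\exists$RI), ($\forall$LI) and ($\forall$RE). For ($\exists$RE) and ($\forall$LE), the discharged assumption $\Gamma \seq \Delta, \varphi[x \mapsto y]$ becomes $\Gamma, \Pi \seq \Delta, \Sigma, \varphi[x \mapsto y]$, and this is exactly the assumption that the modified rule instance, with context $\Gamma, \Pi \seq \Delta, \Sigma$ and conclusion $\Pi' , \Pi \seq \Sigma', \Sigma$, discharges. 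So the discharging structure is respected.

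The main obstacle is the eigenvariable conditions in ($\exists$LI), ($\forall$RI), ($\exists$RE) and ($\forall$LE), because the added context $R$ may contain the eigenvariable $y$ free. I would handle this before adding the context. Proceeding from the topmost such rule application downward, as in the proof of Lemma~\ref{lemma: substitution}, I rename each eigenvariable to a fresh variable $z$ that occurs neither in $\mathcal{D}$ nor in $R$; this is justified by Lemma~\ref{lemma: renaming variables}. The renaming is safe because the eigenvariable does not occur in the undischarged premises or in the conclusion of the relevant subderivation, so those are unchanged. In the case of ($\exists$RE) and ($\forall$LE), the renaming is applied both to the subderivation $\mathcal{D}_{2}$ and to its discharged assumptions, which contain $y$ only in the indicated position. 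After this renaming, adding $R$ violates no eigenvariable condition, and the induction goes through.
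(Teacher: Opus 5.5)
Your proof is correct and follows essentially the paper's argument: add the side sequent to every sequent of $\mathcal{D}_{1}$ and $\mathcal{D}_{2}$, graft the results, recover the leaves $S' \sqcup S_{2}$ and $S \sqcup S'$ from $X$ by Lemma~\ref{lemma: weakening admissible}, and collapse $S \sqcup S$. The differences are small. You rename the eigenvariables of the derivation away from the added context, whereas the paper renames the free variables of the added context to fresh ones and then substitutes back via Lemma~\ref{lemma: substitution}, whose proof does your renaming internally. Your explicit (CL)/(CR) step also properly fills in the paper's remark that ``$S \sqcup S$ is $S$'', which for multisets holds only up to contraction. One detail to add: when grafting $\mathcal{D}_{1}'$ onto $\mathcal{D}_{2}'$, choose the fresh eigenvariables of $\mathcal{D}_{2}$ to avoid the variables of $\mathcal{D}_{1}$ as well (e.g.\ those free in $S_{1}$), so that the new premises $S_{1} \sqcup S_{2}$ and $X \sqcup S_{2}$ respect the eigenvariable conditions below the grafted leaves.
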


\begin{proof}
  Consider derivations $\mathcal{D}_{1}$ and $\mathcal{D}_{2}$ of $S$ from $X, S_{1}$ and $X, S_{2}$, respectively. Let $S_{1}'$ ($S_{2}'$, $S'$) be obtained from $S_{1}$ (from $S_{2}$, from $S$) by replacing distinct variables in $S_{1}$ by distinct fresh variables which do not occur anywhere in $\mathcal{D}_{1}$ and $\mathcal{D}_{2}$.

  Prefacing every premise of $\mathcal{D}_{1}$ with instances of (WL),(WR) yields a derivation $X \sqcup S_{2}', S_{1} \sqcup S_{2}' \vdash_{\MQST} S \sqcup S_{2}'$. (Because the free variables on $S_{2}'$ do not occur anywhere in $\mathcal{D}_{1}$, this does not interfere with any of the quantifier rules.) By Lemma~\ref{lemma: substitution} we have $X \sqcup S_{2}, S_{1} \sqcup S_{2} \vdash_{\MQST} S \sqcup S_{2}$. Similarly, prefacing $\mathcal{D}_{2}$ with instances of (WL),(WR) yields a derivation $X \sqcup S', S_{2} \sqcup S' \vdash_{\MQST} S \sqcup S'$. By Lemma~\ref{lemma: substitution} we have $X \sqcup S, S_{2} \sqcup S \vdash_{\MQST} S \sqcup S$. Concatenating these one obtains a derivation $X \sqcup S, X \sqcup S_{2}, S_{1} \sqcup S_{2} \vdash_{\MQST} S \sqcup S$. Since $X \vdash_{\MQST} S'$ for each $S' \in X \sqcup S$, and $X \vdash_{\MQST} S''$ for each $S'' \in X \sqcup S_{2}$, and $S \sqcup S$ is $S$, we obtain that $X, S_{1} \sqcup S_{2} \vdash_{\MQST} S$.
\end{proof}

In order to obtain completeness via a canonical model, we have to patiently reorganise the entire apparatus of theories we used in the first part of this paper, for now we no longer have Henkin constants at our disposal. For a start, we inductively define the notion of a \emph{witness} for an $\mathcal{L}$-sequent $S$. 

\begin{definition}
An \emph{$\mathcal{L}$-theory} (for short, a \emph{theory}, when no confusion is likely to arise) is a set of $\mathcal{L}$-sequents closed under derivability in $\MQST$.
\end{definition}

Since all the definitions contained in Definition \ref{cataratta} are unaffected by the change of language, we will freely use them in what follows. Moreover, we add the following
\begin{definition}
A theory $T$ is \emph{witnessed} if
\begin{enumerate}
\item $\Gamma \seq \Delta, \exists x\,\varphi \in T$ implies $\Gamma \seq \Delta, \varphi[x \mapsto t] \in T$ for some term $t$, and 
\item $\forall x\,\varphi, \Gamma \seq \Delta \in T$ implies $\varphi[x \mapsto t], \Gamma \seq \Delta \in T$ for some term $t$.
\end{enumerate}
\end{definition}

Hereafter, the set of all $\mathcal{L}$-variables will be denoted by $Var_{\mathcal{L}}$. The set of variables which occur (whether free or bound) in a set of sequents $X$ will be denoted by $Var(X)$. We say that $X$ \emph{contains few variables} if $\card{Var(X)} < \card{Var_{\mathcal{L}}}$. (Although we have previously used $\card{\Gamma}$ to denote the underlying set of a multiset $\Gamma$, here we use $\card{X}$ to denote the cardinality of the set $X$.)

\begin{theorem}\label{giubileo}
  Suppose that $\card{Var_{\mathcal{L}}} = \card{Seq_{\mathcal{L}}}$ is a regular cardinal. If $T$ is an $S$-consistent theory that contains few variables, then there exists a prime, witnessed and $S$-consistent theory $T^{\prime}$ such that $T\subseteq T^{\prime}$.
\end{theorem}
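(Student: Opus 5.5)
The plan is a transfinite Lindenbaum-style construction of length $\kappa \assign \card{Var_{\mathcal{L}}} = \card{Seq_{\mathcal{L}}}$. Along a chain of $S$-consistent theories we interleave two kinds of tasks: \emph{witnessing tasks}, which handle sequents of the form $\Gamma \seq \Delta, \exists x\,\varphi$ or $\forall x\,\varphi, \Gamma \seq \Delta$, and \emph{primeness tasks}, which handle sequents of the form $S_{1} \sqcup S_{2}$.

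\textbf{Bookkeeping.} A theory contains every provable sequent, so it mentions every variable. I will therefore read ``contains few variables'' as a condition on a generating set. Concretely, I keep a set $V_{\alpha}$ of \emph{used} variables with $\card{V_{\alpha}} < \kappa$, such that $T_{\alpha} = \Th(X_{\alpha})$ for some generating set $X_{\alpha}$ with $Var(X_{\alpha}) \cup Var(S) \subseteq V_{\alpha}$. We start from $T_{0} = T$. I fix an enumeration $\langle (\beta_{\alpha}, S_{\alpha}) \mid \alpha < \kappa \rangle$ of $\kappa \times Seq_{\mathcal{L}}$ with $\beta_{\alpha} \leq \alpha$, using a pairing bijection $\kappa \times \kappa \cong \kappa$ with the usual monotonicity. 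At limit stages we take unions. Each successor stage adds finitely many sequents and, in the witnessing case, one fresh variable. Regularity of $\kappa$ then gives $\card{V_{\alpha}} \leq \card{V_{0}} + \card{\alpha} \cdot \aleph_{0} < \kappa$ for all $\alpha < \kappa$; for $\kappa = \aleph_{0}$ the stages are finite and the bound is immediate. Hence a fresh variable outside $V_{\alpha}$ always exists.

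\textbf{Successor step.} At stage $\alpha+1$ I look at $S_{\alpha}$, and act only if $S_{\alpha} \in T_{\alpha}$; otherwise $T_{\alpha+1} = T_{\alpha}$.
\begin{enumerate}
\item \emph{Primeness.} If $S_{\alpha} = S_{1} \sqcup S_{2}$ with both parts nonempty (a case considered for every decomposition, say by enumerating pairs), Lemma~\ref{lemma: pcp for qst} shows that $T_{\alpha}, S_{1}$ and $T_{\alpha}, S_{2}$ cannot both derive $S$. I add an $S_{i}$ that keeps the theory $S$-consistent.
\item \emph{Witnessing.} Say $S_{\alpha} = \Gamma \seq \Delta, \exists x\,\varphi$. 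I pick $y \notin V_{\alpha} \cup Var(S_{\alpha})$ and add $\Gamma \seq \Delta, \varphi[x \mapsto y]$. Suppose this made $S$ derivable. A derivation $\mathcal{D}_{2}$ of $S$ from $X_{\alpha}$ plus this sequent does not have $y$ free in its undischarged premises from $X_{\alpha}$, nor in $S$, nor in $\Gamma \seq \Delta$. Combining $\mathcal{D}_{2}$ with a derivation of $\Gamma \seq \Delta, \exists x\,\varphi$ from $X_{\alpha}$ by ($\exists$RE) would then give $X_{\alpha} \vdash S$, a contradiction. The universal case on the left is symmetric, via ($\forall$LE).
\end{enumerate}

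\textbf{Verification of the union.} Let $T' = \bigcup_{\alpha<\kappa} T_{\alpha}$. It is a theory because derivations are finite and the chain is increasing. It is $S$-consistent because each $T_{\alpha}$ is. Now take any sequent in $T'$. It already lies in some $T_{\beta}$, and the pair $(\beta, \text{that sequent})$ occurs as some $(\beta_{\alpha}, S_{\alpha})$ with $\alpha \geq \beta$, so it is treated at stage $\alpha+1$. Hence $T'$ is witnessed, and it satisfies condition (2) of Lemma~\ref{Mario}. The proof of (2)$\Rightarrow$(1) there only uses weakening, which is admissible here by Lemma~\ref{lemma: weakening admissible}, so $T'$ is prime.

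\textbf{Main obstacle.} The delicate point is the eigenvariable condition in the witnessing step. The theory $T_{\alpha}$ contains sequents mentioning $y$, so the argument must go through the small generating set $X_{\alpha}$, and the derivation must be massaged so that no undischarged premise contains $y$. The same requirement is exactly what makes regularity of $\kappa$ necessary: it keeps $V_{\alpha}$ small at every stage $\alpha < \kappa$, so that fresh variables never run out. If a premise accidentally contains $y$, I would rename it away using Lemma~\ref{lemma: renaming variables} and Lemma~\ref{lemma: substitution}.
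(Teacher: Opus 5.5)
Your argument is correct, but it takes a different route from the paper. The paper uses a single-pass Lindenbaum construction. Each sequent $S_\alpha$ is visited once and added, together with witnesses for all its decompositions of the form $\Gamma \seq \Delta, \exists x\,\varphi$ or $\forall x\,\varphi, \Gamma \seq \Delta$, whenever $Y_\alpha, S_\alpha \nvdash S$. Closure under derivability and primeness then come for free from maximality. A sequent rejected at stage $\alpha$ can never belong to $T'$. And if $S_1 \sqcup S_2 \in T'$ with $S_1, S_2 \notin T'$, then $T', S_1 \vdash S$ and $T', S_2 \vdash S$, so Lemma~\ref{lemma: pcp for qst} gives $T' \vdash S$.

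You instead saturate only the sequents already in the theory, explicitly deciding each splitting and adding each witness. That is why you need the pairing enumeration that revisits every sequent cofinally. The core ingredients are the same: Lemma~\ref{lemma: pcp for qst} and the eigenvariable argument with ($\exists$RE)/($\forall$LE). The paper's version is shorter and yields a maximal $S$-consistent theory, while yours is more careful about variables.

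Your reading of ``contains few variables'' via a small generating set is a genuine repair, not pedantry. Every theory contains all instances of (GID), so it mentions every variable. The paper's claim that fresh variables exist because $Y_\alpha$ (with $Y_0 = T$) contains few variables needs exactly your fix.

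Minor points:
\begin{itemize}
\item Your bound $\card{V_0} + \card{\alpha}\cdot\aleph_0 < \kappa$ does not actually use regularity.
\item You should enumerate \emph{tasks} rather than bare sequents, since one sequent may need several splittings and several witnesses. A task is a sequent together with one chosen splitting or one chosen quantifier occurrence.
\item The implication (2)$\Rightarrow$(1) of Lemma~\ref{Mario} is immediate by splitting off one formula at a time; it is not a matter of weakening.
\end{itemize}
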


\begin{proof}
  Suppose that $T$ is $S$-consistent and contains few variables. We put the set of $\mathcal{L}$-sequents $Seq_{\mathcal{L}}$ in correspondence with some cardinal $\kappa$, i.e.\ we consider a sequence of $\mathcal{L}$-sequents $S_{\alpha}$ with $\alpha \in \kappa$ which contains every $\mathcal{L}$-sequent. We now define a sequence of sets of sequents $Y_{\alpha}$ with $\alpha \in \kappa$ such that each $Y_{\alpha}$ is $S$-consistent and each $Y_{\alpha}$ contains few variables.
  
  We take $Y_{0} \assign T$. If $Y_{\alpha}, S_{\alpha} \vdash_{\MQST} S$, we take $Y_{\alpha+1} \assign Y_{\alpha}$. Otherwise, $Y_{\alpha}, S_{\alpha} \nvdash_{\MQST} S$. If $S_{\alpha}$ does not have the form $\Gamma \seq \Delta, \exists x\,\varphi$ or $\forall x\,\varphi, \Gamma \seq \Delta$, we take $Y_{\alpha+1} \assign Y_{\alpha} \cup \{ S_{\alpha} \}$. Otherwise, for each of the finitely many ways of writing $S$ as either $\Gamma \seq \Delta, \exists x\,\varphi$ or $\forall x\,\varphi, \Gamma \seq \Delta$ we pick a distinct fresh variable $y$ which does not occur in $Y_{\alpha} \cup \{ S_{\alpha}, S \}$ and we take $W_{\alpha}$ to be the non-empty finite set consisting of the sequents $\Gamma \seq \Delta, \varphi[x \mapsto y]$ and $\varphi[x \mapsto y], \Gamma \seq \Delta$ obtained in this way. (Such fresh variables exist because $Y_{\alpha}$ contains few variables and $\card{Var_{\mathcal{L}}} = \card{Seq_{\mathcal{L}}}$.) By ($\exists$RE) and ($\forall$LE), $Y_{\alpha}, S_{\alpha} \nvdash_{\MQST} S$ implies $Y_{\alpha}, S_{\alpha}, W_{\alpha} \nvdash_{\MQST} S$. Taking $Y_{\alpha+1} \assign Y_{\alpha} \cup \{ S_{\alpha} \} \cup W_{\alpha}$ therefore yields an $S$-consistent set. Because we only added finitely many sequents to $Y_{\alpha}$, the set $Y_{\alpha+1}$ still contains few variables.

  If $\alpha$ is a limit ordinal, we take $Y_{\alpha} \assign \bigcup \set{Y_{\beta}}{\beta < \alpha}$. Because $\vdash_{\MQST}$ is a finitary relation, $Y_{\alpha}$ is an $S$-consistent set. $Y_{\alpha}$ still contains few variables, since $Var(Y_{\alpha}) = \bigcup \set{Var(Y_{\beta})}{\beta < \alpha}$ where $\alpha < \card{Var_{\mathcal{L}}}$ and $Var(Y_{\beta}) < \card{Var_{\mathcal{L}}}$ for each $\beta < \alpha$. (This relies on the regularity of $\card{Var_{\mathcal{L}}}$.)
  
  Finally, we take $T' \assign \bigcup \set{Y_{\alpha}}{\alpha \in \kappa}$. Again, because $\vdash_{\MQST}$ is a finitary relation, $T' \nvdash_{\MQST} S$. The set $T'$ is a theory: if $T' \vdash_{\MQST} S_{\alpha}$, then clearly $Y_{\alpha}, S_{\alpha} \nvdash_{\MQST} S$, so $S_{\alpha} \in Y_{\alpha+1} \subseteq T'$. It is a witnessed theory by construction. Finally, $T'$ is a prime theory by Lemma~\ref{lemma: pcp for qst}. 
\end{proof}

\begin{theorem}
  Let $X \cup \{ S \}$ be a set of $\mathcal{L}$-sequents. If $\lvert X \rvert  \models_{\mathrm{ST}} \lvert S \rvert$, then $X \vdash_{\MQST} S$.
\end{theorem}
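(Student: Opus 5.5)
We argue contrapositively: assuming $X \nvdash_{\MQST} S$, we build an $\mathrm{ST}$-model which satisfies every sequent in $\lvert X \rvert$ but not $\lvert S \rvert$.

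\textbf{Step 1: enlarging the language.} The first hurdle is the hypothesis of Theorem~\ref{giubileo}. To meet it, we expand $\mathcal{L}$ by a stock of fresh variables so that $\card{Var_{\mathcal{L}}} = \card{Seq_{\mathcal{L}}}$ is a regular cardinal strictly larger than $\card{Var(X \cup \{S\})}$. For instance, take $\kappa$ to be the successor cardinal of $\max(\aleph_0, \card{\mathcal{L}}, \card{X})$ and allow $\kappa$ variables.

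This move needs a conservativity check: derivability from $X$ must not change under the expansion. A derivation in the larger language uses only finitely many new variables. By Lemma~\ref{lemma: renaming variables}, these can be renamed to old variables that do not occur in the derivation. Similarly, $\mathrm{ST}$-consequence between sequents of the old language does not depend on the set of variables available. So we may assume the cardinality hypothesis holds.

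\textbf{Step 2: a prime witnessed theory.} Let $T_0$ be the theory generated by $X$. It consists of the sequents derivable from $X$. It contains few variables, since $Var(T_0)$ may be taken to be the old variables plus those of $X$, and every derivable sequent can be renamed into those. It is $S$-consistent by assumption. Theorem~\ref{giubileo} then yields a prime, witnessed, $S$-consistent theory $T \supseteq T_0$.

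\textbf{Step 3: the canonical model.} We define $\mathsf{M} = \langle D, I \rangle$ exactly as in the proof of Theorem~\ref{thm: completeness}:
\begin{itemize}
\item $D$ is the set of all $\mathcal{L}$-terms, and $I(t) = t$ for every term $t$;
\item the atomic values are fixed by which of $\emptyset \seq \varphi$ and $\varphi \seq \emptyset$ belong to $T$.
\end{itemize}
These atomic values are well defined because $T$ is prime, hence complete. The main claim is the truth lemma: for every formula $\varphi$, the value $I(\varphi)$ is determined by membership of $\emptyset \seq \varphi$ and $\varphi \seq \emptyset$ in $T$, following the same three-case table as before. We prove it by induction on complexity.
\begin{itemize}
\item \emph{Connectives.} These cases go through as in \cite{DP}, using the bidirectional propositional rules.
\item \emph{Universal quantifier.} Suppose $\emptyset \seq \forall x\,\varphi \in T$. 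Then ($\forall$RE) gives $\emptyset \seq \varphi[x\mapsto t] \in T$ for every $t$. Suppose instead $\forall x\,\varphi \seq \emptyset \in T$. Then witnessedness supplies a $t$ with $\varphi[x\mapsto t] \seq \emptyset \in T$. For the converse directions we use ($\forall$RI) and ($\forall$LI).
\end{itemize}

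\textbf{Main obstacle.} The delicate direction is the right introduction. From $\emptyset \seq \varphi[x\mapsto t] \in T$ for \emph{all} terms $t$, we must derive $\emptyset \seq \forall x\,\varphi \in T$. Pick a variable $y$ not occurring in $\varphi$; ($\forall$RI) needs only the single premise $\emptyset \seq \varphi[x\mapsto y]$, with the eigenvariable fresh in that premise. Since $T$ is deductively closed, it contains $\emptyset\seq\forall x\,\varphi$. Bound-variable clashes, where $t$ is not substitutable, are handled by renaming bound variables. Invoking alpha-equivalence in $\MQST$ is straightforward via Lemma~\ref{lemma: renaming variables} and the quantifier rules.

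The existential quantifier is dual:
\begin{itemize}
\item ($\exists$LE) passes from $\exists x\,\varphi \seq \emptyset \in T$ to all instances;
\item witnessedness clause (1) supplies a term for $\emptyset \seq \exists x\,\varphi \in T$;
\item ($\exists$RI) and ($\exists$LI) handle the converse directions.
\end{itemize}

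\textbf{Step 4: conclusion.} Primeness of $T$ now ensures that $\mathsf{M}$ $\mathrm{ST}$-satisfies $\lvert S' \rvert$ for each $S' \in T \supseteq X$. Also $\mathsf{M}$ does not $\mathrm{ST}$-satisfy $\lvert S \rvert$. Otherwise some $\gamma \seq \emptyset$ or $\emptyset \seq \delta$ lies in $T$, and weakening (Lemma~\ref{lemma: weakening admissible}) would give $S \in T$, contradicting $S$-consistency. Hence $\lvert X\rvert \not\models_{\mathrm{ST}} \lvert S\rvert$, which is the contrapositive of the statement.
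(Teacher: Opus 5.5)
Your architecture is the paper's: enlarge the variables, apply Theorem~\ref{giubileo}, take the term model, prove a truth lemma. However, the step you single out as the main obstacle is not valid. The eigenvariable condition on ($\forall$RI) requires that $y$ not occur free in any \emph{undischarged premise} of the derivation of $\Gamma \seq \Delta, \varphi[x\mapsto y]$. When you take $\emptyset \seq \varphi[x \mapsto y]$ from $T$ as an assumption, that assumption is itself undischarged and contains $y$ free, so choosing $y$ fresh for $\varphi$ does not help. Were the step legal, we would have $\emptyset \seq P(y) \vdash_{\MQST} \emptyset \seq \forall x\,P(x)$, although the latter does not ST-follow from the former. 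The same objection applies to your use of ($\exists$LI) for $\exists x\,\varphi \seq \emptyset$. The other directions of the truth lemma, via ($\forall$RE), ($\forall$LI), ($\exists$RI), ($\exists$LE) and witnessedness, are fine.

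Nor can this implication be recovered from what Theorem~\ref{giubileo} provides, because witnessedness only concerns $\exists$ on the right and $\forall$ on the left. Take $P$ unary, no function symbols, and let $\mathsf{M}$ have domain $Var_{\mathcal{L}} \cup \{d\}$, with $I(v)=v$, $I(P)(v)=\frac{1}{2}$ and $I(P)(d)=0$. Let $T$ be the set of sequents $S'$ with $\mathsf{M} \models_{\mathrm{ST}} \lvert S' \rvert$. Then $T$ is a theory by soundness, and it is prime. It is also witnessed: by monotonicity of the strong Kleene clauses in the information order, a formula's value at $x:=v$ (for $v$ a variable) is the same for every $v$, and is either $\frac{1}{2}$ or its value at $x:=d$. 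So any variable not occurring in the formula is a witness. This $T$ omits $\emptyset \seq \forall x\,P(x)$ but contains $\emptyset \seq P(t)$ for every term $t$. In its term model $\forall x\,P(x)$ gets value $\frac{1}{2}$, so the truth lemma fails.

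What is missing is a \emph{negative} witness property. If $\Gamma \seq \Delta, \forall x\,\varphi \notin T$ (resp.\ $\exists x\,\varphi, \Gamma \seq \Delta \notin T$), then some instance $\Gamma \seq \Delta, \varphi[x\mapsto t]$ (resp.\ $\varphi[x \mapsto t], \Gamma \seq \Delta$) must also be outside $T$. This has to be built into the Lindenbaum construction, while the current set still has few variables, so that a genuinely fresh $u$ exists. For instance, keep a set $R$ of rejected sequents alongside the accepted ones. Maintain that no $\sqcup$-combination of members of $R$ is derivable from the accepted ones; this is preserved by the argument of Lemma~\ref{lemma: pcp for qst}. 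Whenever $\Gamma \seq \Delta, \forall x\,\varphi$ is rejected, reject $\Gamma \seq \Delta, \varphi[x\mapsto u]$ as well. If $(\Gamma \seq \Delta, \varphi[x \mapsto u]) \sqcup r$ were derivable, ($\forall$RI) would now legitimately yield $(\Gamma \seq \Delta, \forall x\,\varphi) \sqcup r$.

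The paper's proof does not spell this step out either. It appeals to ``the same strategy as in Theorem~\ref{thm: completeness}'', where the step is carried by ($\forall$RW$\downarrow$), a rule with no eigenvariable condition and no counterpart here.

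A smaller point, shared with the paper: $\Th(X)$ contains provable sequents in every variable, so it never literally contains few variables. The construction should be run on $X$ itself (or another generating set), with variables chosen fresh for that set.
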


\begin{proof}
  Observe that we can always expand our set of variables $Var_{\mathcal{L}}$ to some larger set of variables $Var'_{\mathcal{L}} \supseteq Var_{\mathcal{L}}$ which satisfies the assumption of the previous theorem. Expanding the set of variables in this way changes neither the relation $\models_{\mathrm{ST}}$ nor the relation $\vdash_{\MQST}$. We may therefore assume without loss of generality that indeed $\card{Var_{\mathcal{L}}} = \card{Seq_{\mathcal{L}}}$ and that this cardinal is regular.

  Because expanding $\mathcal{L}$ by new object variables yields a consequence relation $\models'_{\mathrm{ST}}$ and a provability relation $\vdash'_{\MQST}$ which are conservative extensions of $\models_{\mathrm{ST}}$ and $\vdash_{\MQST}$, it suffices to prove the required implication in some extension of $\mathcal{L}$ by new variables. We may therefore assume without loss of generality that $X$, and hence $Th(X)$, contain few variables.
  
  Suppose contrapositively that $X \nvdash_{\MQST} S$. Then $\Th(X)$ is $S$-consistent and contains few variables. By Theorem \ref{giubileo}, there exists a prime, witnessed and $S$-consistent theory $T$ such that $\Th(X)\subseteq T$. We want to construct a canonical model $\mathsf{M}= \langle D, I\rangle$ such that $\mathsf{M}\models_{\mathrm{ST}} S'$ for all $S' \in T$, but it is not the case that $\mathsf{M}\models_{\mathrm{ST}} S$. Let thus:
  
  \begin{itemize}
    \item $D$ be the set of all terms;
    \item $I(x) = x$ for every variable $x$;
    \item $I(f^n)$, for any $n$-ary function symbol $f^n$, be the free operation associated with $f^n$:
    \begin{align*}
    f^{\mathsf{M}}\colon \langle t_{1}, \dots, t_{n} \rangle \mapsto f^n (t_{1}, \dots, t_{n});
    \end{align*} 
    \item for any $n$-ary relation symbol $P^n$,
    \[
    I(P^n)(t_{1}, \dots, t_{n}) =\begin{cases}
    1  & \text{ if } \emptyset \seq P^n(t_1,...,t_n) \in T, P^n(t_1,...,t_n) \seq \emptyset \notin T; \\
    \frac{1}{2} & \text{ if } \emptyset \seq P^n(t_1,...,t_n) \in T, P^n(t_1,...,t_n) \seq \emptyset \in T; \\
    0  & \text{ if } \emptyset \seq P^n(t_1,...,t_n) \notin T, P^n(t_1,...,t_n) \seq \emptyset \in T.
    \end{cases}
    \]
\end{itemize}
  Observe that, according to the previous definition, $I(t) = t$ for any $\mathcal{L}$-term $t$.
  
 Using the same strategy as in Theorem \ref{thm: completeness} we show that, for any $\mathcal{L}$-sequent $S'$, $\mathsf{M}$ $\mathrm{ST}$-satisfies $S'$ if and only if $S' \in T$. Hence $\mathsf{M}$ $\mathrm{ST}$-satisfies $X$, but it does not $\mathrm{ST}$-satisfy $S$, which means that $\lvert X \rvert \nvDash_{\mathrm{ST}} \lvert S \rvert$. 
\end{proof}

\section{Normalisation and Interpolation for $\mathcal{MQST}$}\label{antonellomura}

  Our goal is now to prove a normalisation theorem for $\MQST$ and then deduce the interpolation theorem as a corollary. Our argument is a slightly adapted version of the normalisation proof for natural deduction by Troelstra \& Schwichtenberg~\cite{TroelstraSchwichtenberg}.

  There are four main differences between their argument and ours. First, the rules that need to be discussed separately in the definition of a segment in $\MQST$ are the right existential and left universal elimination rules, rather than the disjunction and existential elimination rules. In $\MQST$ the disjunction and conjunction elimination rules behave entirely symmetrically. Second, the definition of a track in fact simplifies in $\MQST$ because there is no analogue of the elimination rule for implication (where the minor premise and the conclusion may be unrelated formulas). This means that every track ends at the root of the derivation. Third, since $\MQST$ operates on sequents, it may happen that an introduction rule is immediately followed by an elimination rule but these rules operate on different principal formulas. This does not introduce any substantial difficulty, but it does mean that an additional case needs to be discussed. Last, in addition to segments which start with an introduction and end with an elimination, we also count segments which start with an instance of (GID) and end with an elimination as cut segments.

  Let us use the term \emph{sidetrack rules} to refer to the rules ($\exists$RE) and ($\forall$LE). These are the only elimination rules where the conclusion is not obtained from the premises by peeling off the principal logical operator. In a sidetrack rule, the premise containing the formula whose principal operator is being eliminated will be called the \emph{major premise} and the other premise (which consists of the same sequent as the conclusion of the rule) will be called the \emph{minor premise}. In all other elimination rules both premises count as major premises.

\begin{definition}
    A \emph{segment} in a derivation is a non-empty sequence $S_{1}, \dots, S_{n}$ of sequent occurrences such that
\begin{enumerate}
\item for $1 \leq i \leq n-1$ either $S_{i}$ is the minor premise and $S_{i+1}$ is the conclusion of a sidetrack rule, or $S_{i}$ is the premise and $S_{i+1}$ is the conclusion of a contraction,
\item $S_{1}$ is not the conclusion of a sidetrack rule or a contraction,
\item $S_{n}$ is not the minor premise of a sidetrack rule or the premise of a contraction.
\end{enumerate} 
The \emph{rank} of a segment is the number of logical symbols that occur in its first sequent.
\end{definition}

  Each sequent in a derivation belongs to some segment, possibly one of length $1$. Up to contraction, a segment contains only instances of the same sequent.

\begin{definition}
Let:
    \begin{enumerate}
        \item a \emph{cut segment} be a segment where $S_{1}$ is either the conclusion of an introduction rule or an instance of (GID) and where $S_{n}$ is the premise (necessarily a major premise) of an elimination rule;
        \item a \emph{cut-free  derivation} be a derivation with no cut segment;
        \item  a \emph{normal derivation} be a cut-free derivation where every instance of a sidetrack rule discharges at least one assumption, and every instance of (GID) is atomic.
    \end{enumerate}
\end{definition}

\begin{theorem}
  If there is a derivation of a sequent $S$ from a set of sequents $X$ in~$\MQST$, then there is a normal derivation of $S$ from $X$ in~$\MQST$.
\end{theorem}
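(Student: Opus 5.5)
We follow the Troelstra–Schwichtenberg strategy: first remove cut segments by a measure-decreasing reduction, then clean up the two remaining defects of normality (non-atomic (GID) and vacuous sidetrack rules), checking that these clean-up steps create no new cut segments.

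\emph{Preliminary reductions.} By Lemma~\ref{lemma: atomic identity}, every non-atomic instance of (GID) can be replaced by a derivation using only atomic (GID) and introduction rules. This turns a cut segment that starts with (GID) into one that starts with an introduction, so we may assume every cut segment begins with an introduction. If a sidetrack rule discharges no assumption, its minor subderivation $\mathcal{D}_2$ already derives $\Pi \seq \Sigma$ from undischarged premises. We then simply replace the rule by $\mathcal{D}_2$. This \emph{pruning} only deletes material, so it never increases the measure below, and we apply it eagerly.

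\emph{Main induction.} For a derivation we take the measure $\langle r, m\rangle$, where $r$ is the maximal rank of a cut segment and $m$ is the sum of the lengths of cut segments of rank $r$. We order these pairs lexicographically and pick a cut segment $\sigma$ of maximal rank that is topmost/rightmost, in the sense that no cut segment of rank $r$ occurs in the subderivations above its last element. There are two cases.

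\emph{Case 1: $\sigma$ has length $>1$.} We permute the elimination applied to its last sequent upward past the final sidetrack rule or contraction, as in the permutative conversions for $\vee$E/$\exists$E.
\begin{itemize}
\item For a sidetrack rule we push the elimination into the minor derivation $\mathcal{D}_2$, renaming eigenvariables by Lemma~\ref{lemma: renaming variables} so the eigenvariable conditions stay intact.
\item For a contraction we duplicate or adjust the elimination. An elimination on a formula of which two copies were contracted becomes two eliminations followed by contractions of the components.
\end{itemize}
This shortens $\sigma$, so $m$ decreases.

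\emph{Case 2: length $1$.} An introduction is immediately followed by an elimination. There are two subcases.
\begin{itemize}
\item If the principal formulas differ (the extra case noted above), the two rules act on distinct formulas. We commute them, applying the elimination first to the premise(s) of the introduction. Any new cut segment this creates has the same or lower rank and sits higher, so we absorb it by a secondary induction on the height above $\sigma$.
\item If the principal formulas coincide, we perform a detour conversion. For propositional connectives the eliminations invert the introductions, so introduction followed by its inverse is replaced by the premise's subderivation. For ($\forall$RI) followed by ($\forall$RE) we substitute $t$ for the eigenvariable via Lemma~\ref{lemma: substitution}. For ($\exists$RI) with witness $t$ followed by ($\exists$RE), we substitute $t$ for $y$ in $\mathcal{D}_2$ and plug the derivation of $\Gamma\seq\Delta,\varphi[x\mapsto t]$ into each discharged assumption. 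The $\forall$L case is dual.
\end{itemize}
In each subcase the newly exposed potential cut segments have strictly lower rank, since they concern proper subformulas. Duplicating subderivations when plugging in several discharged copies copies only subderivations that contain no rank-$r$ cuts, by the choice of $\sigma$. Hence $m$ drops.

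\emph{Expected obstacle.} The main obstacle is the bookkeeping in Case 1 and in the non-matching subcase of Case 2. Because sequents carry side formulas, a rank is counted on whole sequents rather than on single formulas. We must verify that permutations do not raise the rank of any segment: the pushed-up elimination strips a logical symbol, so new segments have rank at most $r$. We must also check that copying the elimination into $\mathcal{D}_2$ is compatible with discharged assumptions. Here the point is that discharged sequents are unchanged and only the conclusion track is altered, since every track ends at the root. If the rank on sequents proves too coarse, I would instead take as rank the degree of the eliminated formula and redo the measure with a multiset ordering.
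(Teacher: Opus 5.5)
Your plan follows the paper's proof closely. You atomise (GID), drop vacuous sidetrack rules, and run a double induction on the maximal rank $r$ and the total length $m$ of rank-$r$ cut segments. A topmost one is then handled by a permutation conversion (length $\geq 2$), a commuting conversion (length $1$, different principal formulas) or a detour conversion (length $1$, same principal formula). Your quantifier detours are spelled out more fully than in the paper.

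One case is missing, however. You claim that after Lemma~\ref{lemma: atomic identity} every cut segment begins with an introduction, and this is false. Atomicity of an instance $\varphi, \Gamma \seq \Delta, \varphi$ of (GID) constrains only $\varphi$, not the side formulas. So an atomic axiom can still be the major premise of an elimination acting on $\Gamma$ or $\Delta$. Such segments are cut segments by definition, which is exactly why the paper counts segments starting with (GID).

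Your own commuting conversion creates them whenever a premise of the introduction is an axiom. Take $\neg q, p \seq p$, then ($\neg$R$\downarrow$) giving $\neg q \seq p, \neg p$, then ($\neg$L$\uparrow$) giving $\seq p, \neg p, q$. Commuting leaves $\neg q, p \seq p$ directly followed by ($\neg$L$\uparrow$). None of your cases treats this: Case 2 presupposes an introduction, and Case 1 only shortens such a segment down to length $1$.

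The repair is the paper's. Because the axiom is atomic, the elimination acts on a side formula, so its conclusion is again an atomic instance of (GID) and the elimination can be dropped. For a sidetrack rule, every discharged assumption is such an instance, so you replace those assumptions by axioms and omit the rule. Any cut segments this creates start with axioms containing fewer logical symbols, so $\langle r, m\rangle$ decreases.

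Separately, in the non-matching subcase you do not need your unspecified secondary induction on height. The paper argues that the new cut segments have strictly lower rank: the premises of the introduction lack the introduced connective, and the re-introduced conclusion lacks the eliminated one. If you really allowed new cut segments of the same rank $r$, your lexicographic measure would not obviously decrease. Commuting past a binary introduction duplicates the elimination, so $m$ could grow.
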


\begin{proof}
  We have already seen (Lemma~\ref{lemma: atomic identity}) that (GID) can be restricted to atomic instances. Moreover, each (normal) derivation is easily transformed into (a normal) one where moreover every instance of a sidetrack rule discharges at least one assumption: it suffices to omit all instances of sidetrack rules which discharge no assumption. To prove the theorem, it thus suffices to show that each such derivation can be transformed into a \emph{cut-free} derivation (a derivation with no cut segments).

  We prove this claim by double induction over the maximal rank $r$ of cut segments in the derivation and the sum $m$ of the lengths of all cut segments of maximal rank.

  Given a derivation with a cut segment, pick a topmost cut segment $\sigma$ of maximal rank. If this segment has length $n \geq 2$, we can obtain a derivation with the same $r$ but smaller $m$ by permuting the elimination rule immediately following $\sigma$ above the last sidetrack rule of $\sigma$. If necessary, we rename the variable involved in the sidetrack rule using Lemma~\ref{lemma: renaming variables}. (This corresponds to the ``permutation contractions'' of~\cite[6.1.3]{TroelstraSchwichtenberg}.)  For example, the following derivation (where the term $t$ might contain the variable $y$):
\begin{prooftree}
\def\fCenter{\seq}
\AxiomC{$\mathcal{D}_{1}$}
\noLine
\UnaryInfC{$\Gamma \fCenter \Delta, \exists x\,\varphi$}
\AxiomC{$[\Gamma \fCenter \Delta, \varphi[x \mapsto y]]$}
\noLine
\UnaryInfC{$\mathcal{D}_{2}$}
\noLine
\UnaryInfC{$\Pi \fCenter \Sigma, \forall u\,\varphi$}
\BinaryInfC{$\Pi \fCenter \Sigma, \forall u\,\varphi$}
\UnaryInfC{$\Pi \fCenter \Sigma, \varphi[u \mapsto t]$}
\end{prooftree}
  is transformed into the following derivation (where $z$ does not occur in $\mathcal{D}_{2}$)
\begin{prooftree}
\def\fCenter{\seq}
\AxiomC{$\mathcal{D}_{1}$}
\noLine
\UnaryInfC{$\Gamma \fCenter \Delta, \exists x\,\varphi$}
\AxiomC{$[\Gamma \fCenter \Delta, \varphi[x \mapsto z]]$}
\noLine
\UnaryInfC{$\mathcal{D}'_{2}$}
\noLine
\UnaryInfC{$\Pi \fCenter \Sigma, \forall u\,\varphi$}
\UnaryInfC{$\Pi \fCenter \Sigma, \varphi[u \mapsto t]$}
\BinaryInfC{$\Pi \fCenter \Sigma, \varphi[u \mapsto t]$}
\end{prooftree}
Because we now have (CL) and (CR), there is one additional type of permutation contraction. For example
\begin{prooftree}
\def\fCenter{\seq}
\AxiomC{$\mathcal{D}$}
\noLine
\UnaryInf$\Gamma \fCenter \Delta, \varphi \wedge \psi, \varphi \wedge \psi$
\UnaryInf$\Gamma \fCenter \Delta, \varphi \wedge \psi$
\UnaryInf$\Gamma \fCenter \Delta, \varphi$
\end{prooftree}
  is transformed into
\begin{prooftree}
\def\fCenter{\seq}
\AxiomC{$\mathcal{D}$}
\noLine
\UnaryInf$\Gamma \fCenter \Delta, \varphi \wedge \psi, \varphi \wedge \psi$
\UnaryInf$\Gamma \fCenter \Delta, \varphi, \varphi \wedge \psi$
\UnaryInf$\Gamma \fCenter \Delta, \varphi, \varphi$
\UnaryInf$\Gamma \fCenter \Delta, \varphi$
\end{prooftree}
  which again decreases $m$.

  If the segment $\sigma$ has length $1$ and is preceded by an instance $\varphi, \Gamma \seq \Delta, \varphi$ of (GID), then the elimination rule following $\sigma$ can be eliminated entirely: because all instances of (GID) are assumed to be atomic, this elimination rule must operate on $\Gamma$ or $\Delta$, so we can simply use a different instance of (GID).

 If the segment $\sigma$ has length $1$ and the principal formula of the introduction rule preceding $\sigma$ is different (or on a different side) than the principal formula of the elimination rule following $\sigma$, we permute the elimination rule above the introduction rule. For example, the derivation
\begin{prooftree}
\def\fCenter{\seq}
\AxiomC{$\mathcal{D}_{1}$}
\noLine
\UnaryInf$\neg \chi, \Gamma \fCenter \Delta, \varphi$
\AxiomC{$\mathcal{D}_{2}$}
\noLine
\UnaryInf$\neg \chi, \Gamma \fCenter \Delta, \psi$
\BinaryInf$\neg \chi, \Gamma \fCenter \Delta, \varphi \wedge \psi$
\UnaryInf$\Gamma \fCenter \Delta, \varphi \wedge \psi, \chi$
\end{prooftree}
  is replaced by
\begin{prooftree}
\def\fCenter{\seq}
\AxiomC{$\mathcal{D}_{1}$}
\noLine
\UnaryInf$\neg \chi, \Gamma \fCenter \Delta, \varphi$
\UnaryInf$\Gamma \fCenter \Delta, \varphi, \chi$
\AxiomC{$\mathcal{D}_{2}$}
\noLine
\UnaryInf$\neg \chi, \Gamma \fCenter \Delta, \psi$
\UnaryInf$\Gamma \fCenter \Delta, \psi, \chi$
\BinaryInf$\Gamma \fCenter \Delta, \varphi \wedge \psi, \chi$
\end{prooftree}
  Observe that the sequents $\neg \chi, \Gamma \seq \Delta, \varphi$ and $\neg \chi, \Gamma \seq \Delta, \psi$ and $\Gamma \seq \Delta, \varphi \wedge \psi, \chi$ may be part of new cut segments, but these cut segments all have strictly lower rank. If $\neg \chi, \Gamma \seq \Delta, \varphi \wedge \psi$ was the only cut segment of maximal rank, this transformation therefore decreases $r$. Otherwise, it decreases $m$.

  Finally, it remains to deal with cut segments of length $1$ where the principal formula of the introduction rule preceding the sequent is the same (and on the same side) as the principal formula of the elimination rule following the sequent. For example, with ($\land$R), the derivation
\begin{prooftree}
\def\fCenter{\seq}
\AxiomC{$\mathcal{D}_{1}$}
\noLine
\UnaryInf$\Gamma \fCenter \Delta, \varphi$
\AxiomC{$\mathcal{D}_{2}$}
\noLine
\UnaryInf$\Gamma \fCenter \Delta, \psi$
\BinaryInf$\Gamma \fCenter \Delta, \varphi \wedge \psi$
\UnaryInf$\Gamma \fCenter \Delta, \varphi$
\end{prooftree}
  is transformed into the derivation
\begin{prooftree}
\def\fCenter{\seq}
\AxiomC{$\mathcal{D}_{1}$}
\noLine
\UnaryInf$\Gamma \fCenter \Delta, \varphi$
\end{prooftree}
  Again, if $\Gamma \seq \Delta, \varphi \wedge \psi$ was the only cut segment of maximal rank, this trans\-formation decreases $r$. Otherwise, it decreases $m$.
\end{proof}

\begin{definition}\label{traccas}
 A \emph{track} in a normal derivation $\mathcal{D}$ is a sequence of occurrences of sequents $S_{1}, \dots, S_{n}$ such that $S_{1}$ is either an instance of (GID) or a top assumption not discharged by any application of a sidetrack rule, $S_{n}$ is the conclusion of $\mathcal{D}$, and for $1 \leq i \leq n-1$ either (i) $S_{i}$ is not the major premise of a sidetrack rule and $S_{i+1}$ is the sequent occurrence below $S_{i}$ or (ii) $S_{i}$ is the major premise of a sidetrack rule and $S_{i+1}$ is an assumption discharged by this rule. 
\end{definition}
  Observe that the existence of an assumption discharged by each application of a sidetrack rule is part of the definition of a normal derivation.
\begin{lemma}
  Each sequent in a normal derivation belongs to some track.
\end{lemma}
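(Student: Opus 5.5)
We argue by induction on the distance of a sequent occurrence from the conclusion of the normal derivation $\mathcal{D}$, i.e.\ top-down along the tree read from the root upwards. More precisely, we show that every sequent occurrence $S$ in $\mathcal{D}$ lies on some track. For this it suffices to construct, for each occurrence $S$, a sequence starting at $S$ and satisfying clauses (i)/(ii) of Definition~\ref{traccas} that ends at the conclusion of $\mathcal{D}$, together with a sequence going upward from $S$ that satisfies the same clauses and begins at an instance of (GID) or at a top assumption not discharged by a sidetrack rule. Concatenating the two yields a track through $S$.

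\textbf{Downward part.} Starting from $S$, we repeatedly take the next element as follows. If the current occurrence is not the major premise of a sidetrack rule, we take the occurrence directly below it. If it is the major premise of a sidetrack rule, clause (ii) lets us jump to an assumption discharged by that rule; such an assumption exists because $\mathcal{D}$ is normal. The jump lands inside the subderivation of the minor premise, and from there we continue downward.

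Termination needs a measure, and this is the step we regard as the main obstacle, since clause (ii) moves upward in the tree. We use the following measure: the multiset of distances to the root of the sidetrack-rule conclusions still "pending" below the current position. Alternatively, we can induct on the number of sidetrack rules lying below the current position in the tree. Each jump enters the minor-premise subderivation of a sidetrack rule $R$. From inside that subderivation, the path can only leave by passing down through the minor premise of $R$ and then below $R$. It cannot return to the major-premise side of $R$, because it moves downward within the minor subtree and any further jump goes to an assumption in a minor subtree of a rule located inside that same subtree. So each sidetrack rule's major premise is visited at most once, and between jumps we strictly descend. With finitely many rules this gives termination, and the only possible endpoint is the root.

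\textbf{Upward part.} We induct on the height of $S$. If $S$ is an instance of (GID) or an undischarged top assumption, the upward part is trivial. If $S$ is a top assumption discharged by a sidetrack rule $R$, then its predecessor under clause (ii) is the major premise of $R$, which is an occurrence of strictly smaller height than the conclusion of $R$. We therefore carry out the induction on a well-founded ordering in which the major premise of $R$ precedes the assumptions that $R$ discharges; such an ordering is available because the major premise subtree is disjoint from the minor one. Otherwise, $S$ is the conclusion of some rule, and we step to a premise that is not a minor premise of a sidetrack rule, or to the minor premise itself. In both cases clause (i) is respected, since we only require that the predecessor not be a major premise of a sidetrack rule. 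Here we always choose a premise that is \emph{not} the major premise of a sidetrack rule: for sidetrack rules we pick the minor premise, and for all other rules any premise will do. Applying the well-founded induction then yields the upward segment.
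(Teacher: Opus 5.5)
Your proposal is correct and follows the same route as the paper's proof. Clauses (i)/(ii) supply a successor for every non-root occurrence, with normality guaranteeing a discharged assumption to jump to. A predecessor is found by stepping to a premise that is not a major premise of a sidetrack rule (the minor premise, in the case of a sidetrack rule), or by stepping from a discharged assumption to the major premise of the rule that discharges it.

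You go further than the paper by addressing termination, which it leaves implicit. Your downward measure is loose, however. The count of sidetrack rules below the current position does not decrease at a jump. You also never show that the path cannot re-enter the major side of $R$ after exiting below $R$.

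Both directions are settled at once by one observation. Every step of a track strictly advances in the post-order traversal of $\mathcal{D}$ that visits each major-premise subtree before the corresponding minor-premise subtree.
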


\begin{proof}
  Given a sequent $S$, the next sequent in any track involving $S$ is uniquely determined by conditions (i) and (ii) in Definition \ref{traccas}. The previous sequent may not be uniquely determined, but either $S$ is a premise discharged by an application of a sidetrack rule, in which case the previous sequent is the major premise of that rule, or it is the conclusion of a rule, in which case at least one of the premises of $S$ can play the role of the previous sequent in a track, or it is (GID) or an undischarged assumption, in which case the track starts with $S$.
\end{proof}

\begin{lemma} \label{lemma: midsegment}
  For each track of a normal derivation consisting of segments $\sigma_{1}, \dots, \sigma_{n}$ in this order there is a segment $\sigma_{i}$, called the \emph{midsegment} of the track, such that (i) the last sequent of each of the segments $\sigma_{1}, \dots, \sigma_{i-1}$ is the major premise of an elimination rule, (ii) the last sequent of each of the segments $\sigma_{i}, \dots, \sigma_{n-1}$ is the premise of an introduction rule, (iii) if the midsegment is not $\sigma_{1}$, then the first sequent of $\sigma_{1}$ is not the conclusion of (GID).
\end{lemma}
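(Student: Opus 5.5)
The plan is to mimic the standard midsegment argument of Troelstra--Schwichtenberg: walk along the track from its first segment towards the root and classify the transition at the end of each segment $\sigma_{j}$ ($j<n$) as either ``elimination'' or ``introduction''. The last sequent of a non-final segment is, by the definition of a segment, neither the minor premise of a sidetrack rule nor the premise of a contraction. Hence the next step of the track is either (a) a passage through the premise of a non-sidetrack rule to its conclusion, or (b) a passage from the major premise of a sidetrack rule to one of the assumptions it discharges. In case (b) the transition is an elimination. In case (a) the rule is either an introduction rule or a non-sidetrack elimination rule, since structural rules other than contraction have no premises. So every transition is of elimination type or introduction type.

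The key step is to show that an introduction transition can never be followed, later along the track, by an elimination transition. Suppose the last sequent of $\sigma_{j}$ is the premise of an introduction rule. Then the first sequent of $\sigma_{j+1}$ is the conclusion of an introduction rule. If the last sequent of $\sigma_{j+1}$ were the major premise of an elimination rule, $\sigma_{j+1}$ would be a cut segment, contradicting normality. Therefore the end of $\sigma_{j+1}$ is again an introduction transition, unless $j+1=n$. One also has to check that in case (b) the next segment begins with a discharged assumption, which is not the conclusion of any rule; this matters only for the direction we do not need. By induction, once an introduction occurs, all subsequent transitions are introductions.

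We then define $i$ as the least index such that the transition after $\sigma_{i}$ is an introduction, or $i=n$ if there is no such index. Properties (i) and (ii) follow immediately from the monotonicity just shown. For (iii), suppose $i>1$, so that the transition after $\sigma_{1}$ is an elimination. If $\sigma_{1}$ began with an instance of (GID), it would be a segment starting with (GID) whose last sequent is the major premise of an elimination rule, that is, a cut segment. This contradicts normality.

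The main obstacle is the bookkeeping in case (b) and in the handling of segments. We must verify that a major premise of a sidetrack rule genuinely ends its segment, as only minor premises are continued, and that an assumption discharged by a sidetrack rule cannot serve as the first sequent of a cut segment following an introduction. The latter holds because it is not the conclusion of an introduction rule or of (GID). I would check these carefully against the segment definition; the rest is routine induction along the track.
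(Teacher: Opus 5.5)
Your proposal is correct and takes the same approach as the paper. The paper's proof is the one-line observation that if no midsegment existed, some segment beginning with the conclusion of an introduction rule or with an instance of (GID) would end in the major premise of an elimination rule, i.e.\ a cut segment; your classification of transitions, the monotonicity argument, and the separate treatment of (GID) for (iii) just spell this out.
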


\begin{proof}
  If no such midsegment exists, then there is by definition a cut segment on this track.
\end{proof}

\begin{lemma} \label{lemma: reduction to single sequent}
  Each finite set of sequents $X$ is interderivable in $\MQST$ with a sequent of the form $\emptyset \seq \varphi$ such that $\varphi$ contains the same free variables and the same relation symbols as $X$.
\end{lemma}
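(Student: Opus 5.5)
The plan is to treat each sequent of $X$ separately, turning it into a single right-hand formula with the bidirectional propositional rules, and then to merge the resulting formulas by conjunction. No quantifier rules are needed, so neither the eigenvariable conditions nor the sidetrack rules come into play.

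\emph{Step 1: a single non-empty sequent.} Take $S = \Gamma \seq \Delta$ with $\Gamma \cup \Delta$ non-empty. I use the formula translation $\boldtau(S)$ from the $\STH$ section, read with multisets: each occurrence of $\gamma$ in $\Gamma$ contributes a disjunct $\lnot\gamma$, and each occurrence of $\delta$ in $\Delta$ contributes a disjunct $\delta$. I then show that $S$ and $\emptyset \seq \boldtau(S)$ are interderivable in $\MQST$ (the analogue of Lemma~\ref{claudioternullo}).
\begin{itemize}
\item Top-down: apply ($\lnot$R) to move each $\gamma$ to the right as $\lnot\gamma$, then apply ($\lor$R) repeatedly to collect all right-hand formulas into one left-associated disjunction.
\item Bottom-up: run the inverses of the same rules, i.e.\ the elimination directions of ($\lor$R) and ($\lnot$R), in reverse order.
\end{itemize}
Since these rules act on single formula occurrences, multiplicities are preserved exactly and no contraction is needed. $\boldtau(S)$ is built from the formulas of $S$ using only $\lnot$ and $\lor$, so its free variables and relation symbols are exactly those of $S$.

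\emph{Step 2: several sequents.} Let $X = \{S_1,\dots,S_n\}$ with $n \geq 1$ and each $S_i$ non-empty, and put $\varphi := \boldtau(S_1) \land \dots \land \boldtau(S_n)$. From the sequents $\emptyset \seq \boldtau(S_i)$, repeated ($\land$R) gives $\emptyset \seq \varphi$. Conversely, the two inverses of ($\land$R) recover each conjunct. Composing with Step 1 yields the interderivability of $X$ with $\emptyset \seq \varphi$, and the free variables and relation symbols of $\varphi$ are those of $X$.

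\emph{Step 3: degenerate cases.} I expect the only real obstacle to lie here, not in the routine rule manipulations of Steps 1 and 2.
\begin{itemize}
\item If $X = \emptyset$, I would take a provable $\varphi$ such as $\psi \lor \lnot\psi$. Then both $X$ and $\emptyset \seq \varphi$ derive exactly the theorems. Its relation symbols, however, are not those of $X$, so I would either relax the symbol condition to ``contained in'' or treat this case as vacuous for the intended application.
\item If $X$ contains the empty sequent $\emptyset \seq \emptyset$, the translation $\varphi_0 \land \lnot\varphi_0$ does not work. From $\emptyset \seq \varphi_0 \land \lnot\varphi_0$ one cannot derive $\emptyset \seq \emptyset$, since the model with $I(\varphi_0) = \frac12$ is a countermodel, and by soundness this blocks any derivation.
\end{itemize}
For the second case, I would first check whether $X$ derives every sequent by Lemma~\ref{lemma: weakening admissible}. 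If so, I would try to show that the intended use (interpolation) needs only finite sets of non-empty sequents. Failing that, I would restrict the statement explicitly to such sets. I would settle this convention before writing out Steps 1 and 2.
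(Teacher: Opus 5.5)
Your Steps 1 and 2 are exactly the paper's proof: each sequent is turned into $\emptyset \seq \neg\varphi_1 \vee \dots \vee \neg\varphi_m \vee \psi_1 \vee \dots \vee \psi_n$ by the bidirectional ($\lnot$R) and ($\lor$R), and the resulting sequents are merged into one conjunction by ($\land$R) and its inverses. Both caveats in Step 3 are correct, and the paper's proof ignores them. In fact no formula at all can stand in for $\emptyset \seq \emptyset$: the model assigning $\frac{1}{2}$ to every atomic formula gives every formula the value $\frac{1}{2}$, so it satisfies each $\emptyset \seq \varphi$ but not $\emptyset \seq \emptyset$. Restricting to non-empty sets of non-empty sequents is therefore necessary, and it suffices for the interpolation proof.
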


\begin{proof}
    Each sequent $\varphi_1, \dots,\varphi_m \seq \psi_1, \dots, \psi_n$ is interderivable with the sequent $\emptyset \seq \neg \varphi_1 \vee \dots \vee \neg \varphi_m \vee \psi_1 \vee \dots \vee \psi_n$. Moreover, each finite set of sequents of the form $ \emptyset \seq \varphi_1, \dots, \emptyset \seq \varphi_n \}$ is interderivable with the sequent $\emptyset \seq \varphi_1 \wedge \dots \wedge \varphi_n$.
\end{proof}

\begin{theorem}
  If $X_{1}, X_{2} \vdash_{\MQST} S$, then there is a finite set of sequents $I$ such that $X_{1} \vdash_{\MQST} I$ and $I, X_{2} \vdash_{\MQST} S$ and $I$ only contains those relation symbols and those free variables which occur in both $X_{1}$ and in $X_{2} \cup \{ S \}$.
\end{theorem}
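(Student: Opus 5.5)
The plan is a Prawitz-style interpolation argument by induction on \emph{normal} derivations, using the normalisation theorem and the track analysis of Lemma~\ref{lemma: midsegment}. Since $\vdash_{\MQST}$ is finitary, we may first assume that $X_1$ and $X_2$ are finite. By Lemma~\ref{lemma: reduction to single sequent} we can always replace a finite candidate interpolant by a single sequent $\emptyset \seq \iota$ with the same free variables and relation symbols. This is what will later allow us to quantify away unwanted variables.

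\textbf{The strengthened claim.} Fix a normal derivation $\mathcal{D}$ of $S$ whose open assumptions are split as $A_1 \subseteq X_1$ and $A_2 \subseteq X_2$. Apply the same normalisation to every subderivation, so that its open assumptions also include assumptions discharged lower down by sidetrack rules; each such discharged assumption is labelled $1$ or $2$ according to a rule fixed below. The claim is that there is a formula $\iota$ such that
\[
A_1 \vdash \emptyset \seq \iota, \qquad \emptyset \seq \iota,\; A_2 \vdash S,
\]
and every relation symbol and free variable of $\iota$ occurs both in $A_1$ and in $A_2 \cup \{S\}$. We additionally keep track of a ``side'', recording whether the track through the conclusion is currently in its E-part rooted in $A_1$, in its E-part rooted in $A_2$, or in its I-part. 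For a conclusion lying in an E-part rooted in $A_1$ we aim for the dual form of the claim: $\iota$ is essentially the conclusion itself, with its surplus material quantified out.

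\textbf{Case analysis on the last rule.}
\begin{itemize}
\item Base cases: (GID) gets a trivial interpolant, either $\varphi_0 \lor \lnot\varphi_0$ or a contradiction-free trivial sequent, according to the side. An assumption from $A_i$ gets either the assumption itself or a tautology.
\item Unary rules (propositional rules in either direction, contraction, ($\forall$LI), ($\exists$RI), ($\forall$RE), ($\exists$LE)) keep the premise's interpolant, because $\emptyset \seq \iota, A_2 \vdash S' \vdash S$.
\item Binary rules ($\land$R), ($\lor$L) take $\iota_1 \land \iota_2$ or $\iota_1 \lor \iota_2$, depending on the side.
\item Eigenvariable rules ($\forall$RI), ($\exists$LI): if $y$ occurs in $\iota$, replace $\iota$ by $\forall y\,\iota$ when $\iota$ comes from the $A_1$ side, and by $\exists y\,\iota$ when it comes from the $A_2$ side. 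The eigenvariable condition ensures that $y$ is absent from the open assumptions, so both derivability requirements survive via ($\forall$RI) or ($\exists$RE).
\item Sidetrack rules ($\exists$RE), ($\forall$LE): label each discharged assumption with the side of the track of the major premise, which is where it comes from. Combine the interpolant $\iota_1$ of the major premise with the interpolant $\iota_2$ of the minor derivation, then bind $y$ existentially or universally. Where the discharged hypothesis sits on the $A_2$ side we form a disjunction, using Lemma~\ref{lemma: pcp for qst} to re-derive $S$ from it.
\end{itemize}

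\textbf{The main obstacle.} The hard step is the symbol condition for elimination rules, which can drop material, for example passing from $\Gamma\seq\Delta,\varphi\land\psi$ to $\Gamma\seq\Delta,\varphi$, or from $\forall x\,\varphi$ to $\varphi[x\mapsto t]$ with $t$ fresh. For this step I would invoke the subformula property delivered by Lemma~\ref{lemma: midsegment}. In the E-part of a track, every sequent is built from subformulas, up to terms, of the track's first sequent, which is an open assumption or a discharged assumption inheriting its label. In the I-part, every sequent is built from subformulas of the final conclusion, or of the minor premise feeding it.

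So whenever a symbol is lost by an elimination on a track rooted in $A_1$, that symbol lies in $A_1$. It therefore need not appear in $\iota$, as long as we set $\iota$ on that segment to be the conclusion itself with non-shared symbols and variables quantified out. The proviso is that non-shared relation symbols cannot be quantified away. The key point to verify is that such symbols can only reach $\iota$ through the midsegment of a track that crosses from the $A_1$ side to the $A_2$ side. At that crossing the sequent is simultaneously built from subformulas of $A_1$ and of $A_2 \cup \{S\}$, which is exactly the shared language.

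Carrying out this bookkeeping is where I expect the real work to lie: specifying the side invariant per track, and handling terms containing non-shared free variables by replacing them with bound variables. Once the invariant is correctly formulated, the remaining cases should be routine.
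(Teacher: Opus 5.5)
\textbf{There is a genuine gap.} Your proposal stops exactly where the difficulty lies. The elimination case is handed to a ``side invariant'' that is never formulated, and several of the pieces you do state are not correct as they stand.

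\emph{Unary rules.} Keeping the premise's interpolant fails for ($\land$R$\uparrow$) and ($\lor$L$\uparrow$), which can drop relation symbols, and these cannot be quantified away. It also fails for the introductions ($\forall$LI) and ($\exists$RI), which drop the free variables of $t$ and so already need the quantification step.

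\emph{Eliminations.} Your remedy is to take $\iota$ to be the current sequent $C$ of an E-part rooted in $A_1$, with surplus variables quantified out. This requires $A_1 \vdash_{\MQST} \emptyset \seq \iota$, i.e.\ that $C$ depends only on $A_1$-side assumptions. The subformula property you invoke bounds only the \emph{language} of $C$, not the assumptions it depends on.

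\emph{Sidetrack rules.} This case needs the same missing fact. To absorb a discharged hypothesis labelled $i$ via ($\exists$RE)/($\forall$LE), you must know that the major premise is derivable from side $i$ alone. You cannot instead record the dependence inside a formula, because $\MQST$ has no detachment. For atomic $\delta,\iota_2$, the sequents $\emptyset \seq \delta$ and $\emptyset \seq \lnot\delta \lor \iota_2$ do not yield $\emptyset \seq \iota_2$: by soundness, take $\delta$ valued $\frac{1}{2}$ and $\iota_2$ valued $0$.

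The needed fact does hold for normal derivations. The subderivation ending at a sequent that precedes the midsegment of its track has exactly one open leaf, namely the one through which the track entered it. Proving this takes an induction over nested sidetrack rules that uses both normality conditions: no cut segments, and every sidetrack rule discharges something. That lemma is the real content of the elimination case, and your proposal neither states nor proves it.

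\emph{(GID) case.} $\varphi_0 \lor \lnot\varphi_0$ is not an admissible interpolant when the relation symbol of $\varphi_0$ is not shared. You need $I = \emptyset$ there, so $I$ cannot in general be reduced to a single formula; for instance, take $X_1 = \emptyset$.

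\textbf{How the paper handles this.} The paper treats the elimination case from the other end of the derivation. Suppose the last rule of the normal derivation is an elimination. By the midsegment lemma, the unique main branch (through premises of introductions, major premises of eliminations, and contractions) contains no introductions. Its top is an undischarged assumption $S'$, not an instance of (GID), since that would form a cut segment.

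One then removes the \emph{topmost} elimination on this branch. Its conclusion $S$, or for ($\exists$RE) and ($\forall$LE) the sequent $S$ it discharges, becomes a new open assumption placed on the same side as $S'$. This gives a smaller normal derivation of the same end-sequent, to which the induction hypothesis applies.

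The resulting interpolant transfers back for two reasons. First, $S'$ yields $S$ (for sidetrack rules: yields everything derivable from $S$ with $y$ fresh). Second, the relation symbols of $S$ are among those of $S'$. So the interpolating set works after quantifying away $y$ and the new variables of $t$. Dropped material therefore never matters, no two interpolants are ever composed across a discharge, and the only combination step is $I_1 \cup I_2$ at ($\land$R$\downarrow$) and ($\lor$L$\downarrow$).
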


\begin{proof}
  In the course of the proof, we shall need to talk about universally (existentially) quantifying with respect to a variable $x$ in a sequent $T$. What we mean by this is that we first replace $T$ by an equivalent sequent of the form $\emptyset \seq \varphi$ using Lemma~\ref{lemma: reduction to single sequent} and then, if $x$ occurs free in $\varphi$, we universally (existentially) quantify $\varphi$ with respect to this variable.

  We prove the claim by induction on the size of a normal derivation $\mathcal{D}$ of $S$ from $X \assign X_{1} \cup X_{2}$. We shall call $I$ an \emph{interpolating set} for $X_{1} \mid X_{2} \vdash_{\MQST} S$. If the derivation does not contain any deductive steps, then $S \in X_1$ or $S \in X_2$. In the former case, $I \assign \{ S \}$ is an interpolating set, while in the latter case $I \assign \emptyset$ is an interpolating set. If the last step is an instance of (GID), then this is in fact the only step of the proof and we can take $I$ to be the empty set. If the last step of this derivation is a contraction from $S$ to $S'$, then each interpolating set $I$ for $X_{1} \mid X_{2} \vdash_{\MQST} S$ is also an interpolating set for $X_{1} \mid X_{2} \vdash_{\MQST} S'$. The same holds if the last step is any of ($\lnot$L$\downarrow$), ($\lnot$R$\downarrow$), ($\land$L$\downarrow$) or ($\lor$R$\downarrow$), since these are invertible rules with a single premise.

  It remains to deal with the cases where the last step is any of ($\forall$LI), ($\forall$RI), ($\exists$LI), ($\exists$RI), ($\land$R$\downarrow$) or ($\lor$L$\downarrow$). Using the inductive hypothesis, we only deal with three of these last six cases, since the other three are entirely analogous:
\begin{enumerate}
\item Let $I_{1}$ and $I_{2}$ be interpolating sets for $X_{1} \mid X_{2} \vdash_{\MQST} \Gamma \seq \Delta, \varphi$ and $X_{1} \mid X_{2} \vdash_{\MQST} \Gamma \seq \Delta, \psi$. Let $I \assign I_{1} \cup I_{2}$. 
Then $I$ is an interpolating set for $X_{1} \mid X_{2} \vdash_{\MQST} \Gamma \seq \Delta, \varphi \wedge \psi$. This is because the proofs $I_{1}, X_{2} \vdash_{\MQST} \Gamma \seq \Delta, \varphi$ and $I_{2}, X_{2} \vdash_{\MQST} \Gamma \seq \Delta, \psi$ extend to a proof $I_{1}, I_{2}, X_{2} \vdash_{\MQST} \Gamma \seq \Delta, \varphi \wedge \psi$, and moreover $X_{1} \vdash_{\MQST} I_{1}$ and $X_{1} \vdash_{\MQST} I_{2}$ imply that $X_{1} \vdash_{\MQST} I$.
\item Let $I$ be an interpolating set for $X_{1} \mid X_{2} \vdash_{\MQST} S$ for $S = \Gamma \seq \Delta, \varphi[x \mapsto y]$, where $y$ occurs free neither in $X_{1}, X_{2}$ nor in $\lvert \Gamma \rvert$ or $\lvert \Delta \rvert$. Let $I'$ be obtained from $I$ by universally quantifying with respect to all variables which do not occur free in $X_{1}$. Then $X_{1} \vdash_{\MQST} I$ and $X_{2}, I \vdash_{\MQST} S$ imply $X_{1} \vdash_{\MQST} I'$ and $X_{2}, I' \vdash_{\MQST} S$. Let $I''$ be obtained from $I'$ by existentially quantifying with respect to all variables which do not occur free in $X_{2}, S'$ for $S' = \Gamma \seq \Delta, \forall x\,\varphi$. In particular, $y$ does not occur free in $I'$, since it does not occur free in $X_{2}, S'$. The set of sequents $I''$ therefore only contains free variables shared by $X_{1}$ and $X_{2}, S'$. Moreover, $X_{1} \vdash_{\MQST} I''$ and $X_{2}, I'' \vdash_{\MQST} S$, using the fact that $y$ does not occur free in $I''$. Thus $I''$ is an interpolating set for $X_{1} \mid X_{2} \vdash_{\MQST} S'$.
\item Let $I$ be an interpolating set for $X_{1} \mid X_{2} \vdash_{\MQST} S$ for $S = \Gamma \seq \Delta, \varphi[x \mapsto t]$. Let $I'$ and $I''$ be obtained from $I$ in the same way as in the previous case. Then $X_{1} \vdash_{\MQST} I''$ as before, and moreover $X_{2}, I' \vdash_{\MQST} S$, so $X_{2}, I' \vdash_{\MQST} S'$ for $S' = \Gamma \seq \Delta, \exists x\,\varphi$ and $X_{2}, I'' \vdash_{\MQST} S'$. Thus $I''$ is an interpolating set for $X_{1} \mid X_{2} \vdash_{\MQST} S'$.
\end{enumerate}

  Now suppose that the last step of the derivation is an elimination. As in the proof of~\cite[Thm.~6.3.1]{TroelstraSchwichtenberg}, we define a \emph{main branch} of a normal derivation to be a branch of the derivation tree which goes from the root to one of the terminal nodes and in doing so only passes through the premises of introduction rules, the \emph{major} premises of elimination rules, or the premises of contraction rules. It follows that the terminal node of the branch is either an undischarged assumption of the proof or an instance of (GID).

  Each main branch forms a subsequence of a track, namely a subsequence where the parts between the major premise of a sidetrack rule and the minor premise have been removed. Since the last step is an elimination rule, it thus follows from Lemma~\ref{lemma: midsegment} that there are no introduction rules on a main branch. In particular, because there are no instances of either ($\land$R$\downarrow$) or ($\lor$L$\downarrow$) on a main branch, there is in fact exactly one main branch in the proof. Moreover, the terminal node of this main branch (possibly followed by a sequence of contractions) is a major premise of an elimination rule. This means that this terminal node cannot be an instance of (GID), otherwise it would form a cut segment.

Using the inductive hypothesis, we obtain some trivial cases for the unary invertible rules, plus twelve cases (depending on the division between $X_{1}$ and $X_{2}$) for the non-invertible rules.

  Let $I$ be an interpolating set for $S, X_{1} \mid X_{2} \vdash_{\MQST} T$. Let us deal with three of the six cases which arise in this situation, since the other three are entirely analogous:
\begin{enumerate}
\item Let $S = \Gamma \seq \Delta, \varphi$ and $S' = \Gamma \seq \Delta, \varphi \wedge \psi$. Then $I$ is also an interpolating set for $S', X_{1} \mid X_{2} \vdash_{\MQST} T$.
\item Let $S = \Gamma \seq \Delta, \varphi[x \mapsto t]$ and $S' = \Gamma \seq \Delta, \forall x\,\varphi$. Let $I'$ be obtained from $I$ by universally quantifying over all variables which do not occur in $S, X_{1}$. Then $S, X_{1} \vdash_{\MQST} I$, so $S', X_{1} \vdash_{\MQST} I$ and $S', X_{1} \vdash_{\MQST} I'$. Also, $X_{2}, I' \vdash_{\MQST} T$. Let $I''$ be obtained from $I'$ by existentially quantifying over all variables which do not occur in $X_{2}, T$. Then $X_{2}, I'' \vdash_{\MQST} T$, so $I''$ is an interpolating set for $S', X_{1} \mid X_{2} \vdash_{\MQST} T$.
\item Let $S = \Gamma \seq \Delta, \varphi[x \mapsto y]$ and $S' = \Gamma \seq \Delta, \exists x\,\varphi$. We may assume without loss of generality, by renaming $y$ if necessary, that $y$ occurs free neither in $S$ nor in $X_{1}, X_{2}, T$. Let $I'$ be obtained from $I$ by existentially quantifying over $y$, and let $I''$ be obtained from $I'$ by universally quantifying over all variables which do not occur in $S', X_{1}$. Then $S, X_{1} \vdash_{\MQST} I$ implies $S, X_{1} \vdash_{\MQST} I'$ and thus $S', X_{1} \vdash_{\MQST} I'$ and $S', X_{1} \vdash_{\MQST} I''$. Also, $X_{2}, I \vdash_{\MQST} T$ implies $X_{2}, I' \vdash_{\MQST} T$ and $X_{2}, I'' \vdash_{\MQST} T$. Let $I'''$ be obtained from $I''$ by existentially quantifying over all variables which do not occur in $X_{2}$ and $T$. Then $S, X_{1} \vdash_{\MQST} I'''$ and $X_{2}, I''' \vdash_{\MQST} T$, so $I'''$ is an interpolating set for $S', X_{1} \mid X_{2} \vdash_{\MQST} T$.
\end{enumerate}
  On the other hand, let $I$ be an interpolating set for $X_{1} \mid X_{2}, S \vdash_{\MQST} T$. Let us again only deal with three of the six cases which arise in this situation:
\begin{enumerate}
\item Let $S = \Gamma \seq \Delta, \varphi$ and $S' = \Gamma \seq \Delta, \varphi \wedge \psi$. Then $I$ is also an interpolating set for $X_{1} \mid X_{2}, S' \vdash_{\MQST} T$.
\item Let $S = \Gamma \seq \Delta, \varphi[x \mapsto t]$ and $S' = \Gamma \seq \Delta, \forall x\,\varphi$. Let $I'$ be obtained from $I$ by universally quantifying over all variables which do not occur in $X_{1}$. Then $X_{1} \vdash_{\MQST} I'$ and $X_{2}, S, I' \vdash_{\MQST} T$, so $X_{2}, S', I' \vdash_{\MQST} T$. Let $I''$ be obtained from $I'$ by existentially quantifying over all variables which do not occur in $X_{2}, S', T$. Then $X_{1} \vdash_{\MQST} I'$ and $X_{2}, S', I'' \vdash_{\MQST} T$, so $I''$ is an interpolating set for $X_{1} \mid X_{2}, S' \vdash_{\MQST} T$.
\item Let $S = \Gamma \seq \Delta, \varphi[x \mapsto y]$ and let $S' = \Gamma \seq \Delta, \exists x\,\varphi$. We may assume without loss of generality, by renaming $y$ if necessary, that $y$ occurs free neither in $S$ nor in $X_{1}, X_{2}, T$. Let $I'$ be obtained from $I$ by existentially quantifying over $y$ and let $I''$ be obtained from $I'$ by universally quantifying over all variables which do not occur in $X_{1}$. Then $X_{1} \vdash_{\MQST} I$ implies $X_{1} \vdash_{\MQST} I'$ and $X_{1} \vdash_{\MQST} I''$. Also, $X_{2}, S, I \vdash_{\MQST} T$ implies $X_{2}, S, I' \vdash_{\MQST} T$ and $X_{2}, S, I'' \vdash_{\MQST} T$. Let $I'''$ be obtained from $I''$ by existentially quantifying over all variables which do not occur in $X_{2}, S, T$. Then $X_{1} \vdash_{\MQST} I'''$ and $X_{2}, S, I''' \vdash_{\MQST} T$, so $I'''$ is an interpolating set for $X_{1} \mid X_{2}, S \vdash_{\MQST} T$.
\end{enumerate}
\end{proof}

\section{Conclusions and Open Problems}

The problem of endowing first-order Strict-Tolerant Logic with an adequate Gentzen-style proof theory is not the sort of problem that admits a unique solution, nor do we claim to have solved it for good. Be that as it may, we believe that there is something to be said for the systems we have introduced. On the one hand, they may help understand what proof-theoretic features a calculus should possess to mirror some natural semantic relations arising out of $\mathrm{ST}$-models. On a more general note -- and this holds true especially of $\MQST$, we think -- they may shed some light on possible ways to recover some of the deductive power of Cut in th context of sequent calculi that do not contain it, and whose operational rules are not all invertible. 

Being a preliminary foray into the topic, this paper leaves many issues unaddressed. At least the following problems remain open for future research:
\begin{itemize}
    \item The paper \cite{Prenosil} contains general split interpolation theorems for propositional sequent calculi with elimination rules similar to $\mathcal{ST}^P$, where either Cut or Identity are missing or suitably restricted. The problem as to whether these results extend to the first-order versions of such calculi is open at the time of writing. 
    \item \emph{Tolerant-Strict Logic} ($\mathrm{TS}$) is the logic dual to $\mathrm{ST}$ \cite{Pailos}. First-order $\mathrm{TS}$ faces a strong completeness issue similar to $\mathrm{ST}$, but addressing it may require non-trivial modifications of the toolbox deployed here. We think that this problem would deserve some consideration. 
    \item The relationships between $\mathcal{ST}^H$ and the Epsilon calculus, in the absence of Cut, have not been clarified. Shedding more light on this relation might help to understand the potential and limits of calculi with Henkin constants, above and beyond the specific proof system examined here.
\end{itemize}

\vspace{6mm}

\emph{Acknowledgments} This work was supported by PLEXUS (Grant Agreement no 101086295), a Marie Sklodowska-Curie action funded by the EU under the Horizon Europe Research and Innovation Programme. We also acknowledge the support of the Italian Ministry of University and Research, under the PRIN project DeKLA: Developing Kleene Logics and their Applications (2022SM4XC8), and of Fondazione di Sardegna, under the project Ubiquitous Quantum Reality (UQR): understanding the natural processes under the light of quantum-like structures (F73C22001360007). The second author's work was funded by the grant 2021 BP 00212 of the grant agency AGAUR of the Generalitat de Catalunya. We are extremely grateful to Elio La Rosa for providing numerous suggestions and a wealth of bibliographical material concerning the Epsilon calculus, and to Pierluigi Graziani, Rosalie Iemhoff and Dave Ripley for their comments. Finally, we thank two anonymous reviewers for their detailed and insightful feedback.

\end{document}